%
%

\documentclass[a4paper,12pt]{article}

\usepackage{amsmath, amsthm, amsfonts}
\usepackage[latin1]{inputenc}


\newcommand{\abs}[1]{\left\vert#1\right\vert}
\newcommand{\ap}[1]{\left\langle#1\right\rangle}
\newcommand{\norm}[1]{\left\Vert#1\right\Vert}
\newcommand{\tnorm}[1]{\left\vert\!\left\vert\!\left\vert#1\right\vert\!\right\vert\!\right\vert}

\def\RR{\mathbb{R}}

\newtheorem{thm}{Theorem}[section]
\newtheorem{cor}[thm]{Corollary}
\newtheorem{lem}[thm]{Lemma}
\newtheorem{prp}[thm]{Proposition}

\theoremstyle{definition}
\newtheorem{dfn}[thm]{Definition}
\theoremstyle{remark}
\newtheorem{rem}[thm]{Remark}


\title{Rate of convergence to self-similarity for Smoluchowski's
  coagulation equation with constant coefficients}

\author{
  José A. Cañizo
  \thanks{Supported by the ANR research group
    SPINADA and a postdoc grant from the Spanish
    \emph{Ministerio de Educación y Ciencia}.
    Email:  \texttt{canizo@ceremade.dauphine.fr}
  }
  \\
  Stéphane Mischler
  \thanks{Email: \texttt{mischler@ceremade.dauphine.fr}}
  \\
  Clément Mouhot
  \thanks{Email: \texttt{mouhot@ceremade.dauphine.fr}}
  \\
  CEREMADE, Univ. Paris-Dauphine
  \\
  Place du Maréchal de Lattre de Tassigny
  \\
  75775 Paris CEDEX 16, France}


\begin{document}
\maketitle

\abstract{We show that solutions to Smoluchowski's equation with a
  constant coagulation kernel and an initial datum with some
  regularity and exponentially decaying tail converge exponentially
  fast to a self-similar profile. This convergence holds in a weighted
  Sobolev norm which implies the $L^2$ convergence of derivatives up
  to a certain order $k$ depending on the regularity of the initial
  condition. We prove these results through the study of the
  linearized coagulation equation in self-similar variables, for which
  we show a spectral gap in a scale of weighted Sobolev spaces. We
  also take advantage of the fact that the Laplace or Fourier
  transforms of this equation can be explicitly solved in this case.}

\vspace{0.2cm}

\noindent
\textbf{Keywords.} Smoluchowski's equation; coagulation equation;
constant coagulation kernel; self-similar variables; spectral gap;
exponential relaxation rate; explicit.

\vspace{0.2cm}

\noindent
\textbf{AMS Subject Classification.} 82C21, 45K05, 82C05.

\newpage
\tableofcontents

\section{Introduction}

\subsection{Smoluchowski's equation with a constant kernel}

Smoluchowski's equation is a well-known model for the time evolution
of irreversible aggregation processes
\cite{D72, A99, citeulike:1134194, MR2068589}. The object of our study is
the continuous version of this equation, with the additional
assumption that the rate at which any two nuclei coalesce (the
\emph{coagulation rate} or \emph{kernel}) is independent of their
sizes. If the number of nuclei of size $y > 0$ at a given time $t$ is
given by $f(t,y)\,dy$, then Smoluchowski's equation reads:
\begin{equation}
  \label{eq:coag-eq}
  \partial_t f(t,y) = C(f(t,\cdot), f(t,\cdot))(y),
\end{equation}
where
\begin{equation}
  \label{eq:coag_operator}
    C(f,f)(y) := 
    \frac{1}{2} \int_0^y f(x) f(y-x) \,dx
    - f(y) \int_0^\infty f(x) \,dx.
\end{equation}
When other coagulation rates are considered, a coagulation coefficient
$a(x,y)$ appears in the equation; here we have set $a(x,y) = 1$ for
all $x,y > 0$. The integral $\int_0^\infty f(t,y) \,y \,dy$, called
\emph{the mass of $f$} at time $t$, represents the total mass of all
nuclei at time $t$, and is a conserved quantity for this equation.

One of the more remarkable properties of Smoluchowski's equation is
the self-similar behavior that its solutions exhibit at large times,
which is expected by formal arguments but has been rigorously proved
only in very particular cases. This behavior is our main concern in
this paper, and among our main results we highlight the following one:
\begin{thm}
  \label{thm:uniform convergence - intro}
  Take a locally absolutely continuous function $f_0 :
  (0,\infty) \to [0,\infty)$ such that, for some $\nu > 0$,
  \begin{equation*}
    \int_0^\infty (f_0'(y))^2 \, y^4\, e^{\nu y} \,dy < \infty.
  \end{equation*}
  Let $f$ be the solution of Smoluchowski's equation
  \eqref{eq:coag-eq} with initial condition $f_0$ (this problem is
  known to be well-posed; see
  \cite{citeulike:1874300,citeulike:1103441}). Then there is a number
  $\mu > 0$ such that for any $0 < \delta < 1$,
  \begin{equation}
    \label{eq:H1 convergence - intro}
    \int_0^\infty \left(
      t^2 \partial_y (f(t, t y)) - \partial_y g_\rho
    \right)^2
    \,y^4 e^{\mu y}\,dy
    \leq
    K^2 t^{-2 \delta}
    \quad
    \text{ for all } t > 0,
  \end{equation}
  for some $K > 0$ which depends only on the initial condition $f_0$.
  In particular,
  \begin{equation*}
    \sup_{y > 0} \left\{
      y^2 \abs{
        t^2 f(t, t y) - g_\rho(y)
      }
    \right\}
    \leq
    K t^{-\delta}
    \quad
    \text{ for all } t > 0,
  \end{equation*}
  where $\rho$ is the mass of $f_0$ and $g_\rho(y) := \frac{4}{\rho}
  e^{-\frac{2}{\rho}y}$ for $y > 0$.
\end{thm}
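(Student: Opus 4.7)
The plan is to pass to self-similar variables, reduce the problem to exponential relaxation toward the steady state $g_\rho$ for the rescaled equation, and then translate exponential convergence in self-similar time $\tau$ into polynomial convergence in original time $t$ via the change of clock $\tau = \log t$.

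First, I would introduce the self-similar unknown $g(\tau,y) := t^2 f(t,ty)$, $\tau = \log t$. A direct computation shows that $g$ satisfies an autonomous equation of the form $\partial_\tau g = C(g,g) + \partial_y(y g) + g$, of which $g_\rho(y) = (4/\rho) e^{-2y/\rho}$ is a stationary solution (with the same mass $\rho$ as $f_0$). The assumption on $f_0$, namely that $f_0 \in H^1(y^4 e^{\nu y}\,dy)$ in an appropriate weighted sense, should propagate to uniform-in-$\tau$ bounds on $g$ in the space $X := H^1(y^4 e^{\mu y}\,dy)$ for some small $\mu>0$. Combined with a previously known qualitative convergence result (\emph{e.g.} Menon--Pego), this gives $g(\tau,\cdot) \to g_\rho$ in $X$ as $\tau \to \infty$, without rate.

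Second, I would linearize. Writing $g = g_\rho + h$, the perturbation satisfies $\partial_\tau h = \mathcal{L} h + C(h,h)$ with $\mathcal{L} h = 2 C(g_\rho, h) + \partial_y(y h) + h$, acting on the invariant subspace of zero-mass perturbations. The technical heart is a spectral gap for $\mathcal{L}$ in $X$: there is $\lambda > 0$ such that $\| e^{\tau \mathcal{L}} h \|_X \le C e^{-\lambda \tau}\| h \|_X$ for admissible $h$. For the constant kernel, taking the Laplace (or Fourier) transform turns the linearized equation into an explicit first-order ODE in the dual variable, which can be solved by the method of characteristics; from the resulting exact formula for the semigroup one obtains sharp resolvent estimates, which are then transported back to the physical weighted Sobolev setting. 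The value of $\lambda$ should be $1$ (or can be taken arbitrarily close to $1$), which is what ultimately produces the exponent $\delta < 1$.

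Third, I would close the argument perturbatively. Combining the spectral gap with bilinear estimates for $C$ in $X$, a standard fixed-point/continuation argument shows that once $g(\tau_0,\cdot)$ is sufficiently close to $g_\rho$ in $X$ (which happens by the qualitative convergence step), one has $\|h(\tau,\cdot)\|_X \le K e^{-\lambda' \tau}$ for all $\tau \ge \tau_0$ and any $\lambda' < \lambda$. Undoing the scaling $\tau = \log t$ turns $e^{-\lambda' \tau}$ into $t^{-\lambda'}$, yielding \eqref{eq:H1 convergence - intro} with $\delta = \lambda'$ arbitrarily close to $1$. The pointwise statement is then a weighted Sobolev embedding: the weight $y^4$ in \eqref{eq:H1 convergence - intro} supplies an $L^\infty$-bound on $y^2 h$ near the origin, while the exponential weight controls large $y$.

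The main obstacle is the spectral gap for $\mathcal{L}$ in the weighted Sobolev setting. The Laplace/Fourier transform gives clean decay in the transform variable, but translating this into decay in $H^1(y^4 e^{\mu y}\,dy)$ requires careful weighted estimates, the handling of the continuous spectrum that typically sits on the imaginary axis, and control over how the regularity and weights interact with the convolution in $C$. A secondary difficulty is propagating the weighted $H^1$ norm uniformly along the full nonlinear flow in self-similar time, which is needed to hand the orbit off from the qualitative regime to the perturbative regime.
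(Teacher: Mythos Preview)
Your overall architecture matches the paper's: pass to self-similar variables, prove a spectral gap for the linearized operator in the weighted Sobolev space, close a local nonlinear estimate by Duhamel, then combine with a global ingredient to reach the perturbative regime, and finally read off the pointwise bound by a weighted Sobolev embedding. However, the two main technical steps differ in interesting ways.

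For the spectral gap, you propose to Fourier/Laplace-transform the \emph{linearized} equation, solve the resulting first-order PDE explicitly, and pull the decay back to the weighted space. The paper instead obtains the gap directly in physical variables: it expands the Aizenman--Bak inequality to second order around the exponential profile, which yields a quadratic functional inequality (their Lemma~2.2) that immediately gives $\langle h, Lh\rangle_{-1,2/\rho} \le -\|h\|_{-1,2/\rho}^2$ on zero-mass perturbations. The gap is then propagated to the full scale $X_{k,\mu}$ by a mechanism of creation of exponential moments for the linear semigroup. This avoids the transform/transport step you flag as the main obstacle and in particular sidesteps any explicit analysis of continuous spectrum.

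For entering the local regime, you invoke Menon--Pego qualitative convergence. The paper instead applies the Fourier transform to the \emph{full nonlinear} equation (not the linearization) and extracts from the explicit formula an $L^2$ rate $\|g(t)-g_\rho\|_2 \le C e^{-t/2}$, which in turn drives the relative entropy to zero; smallness of the relative entropy (not of $\|g-g_\rho\|_X$) is the trigger for the local theorem. This matters for your plan: Menon--Pego gives a sup-norm statement on $y|g-g_\rho|$, which does not by itself give smallness in $X = H^1(y^4 e^{\mu y})$, so your hand-off ``close in $X$ by the qualitative step'' needs an extra argument (e.g.\ interpolation against uniform $X$-bounds, or switching the smallness criterion to entropy as the paper does). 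A minor point: the paper uses $\tau = \log(1+t)$ rather than $\tau = \log t$, which keeps $\tau \ge 0$ and avoids having to control the orbit as $\tau \to -\infty$.
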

This theorem is a direct consequence of Theorem
\ref{thm:global-convergence-intro} below, proved in Section
\ref{sec:global}. Results along these lines were already obtained in
\cite{citeulike:1103441} and \cite{MR2139564}, where it was proved
that with some regularity of the initial condition it holds that
\begin{equation}
  \label{eq:MP - uniform convergence}
  \sup_{y > 0} \left\{
    y \abs{
      t^2 f(t, t y) - g_\rho(y)
    }
  \right\}
  \to
  0
  \quad
  \text{ when } t \to \infty.
\end{equation}
A result similar to the latter, but giving uniform convergence only in
compact sets, can be found in \cite{citeulike:1103487}; weak
convergence was proved in~\cite{citeulike:1069613} using entropy
arguments. Hence, the main improvement of Theorem \ref{thm:uniform
  convergence - intro} is that we give an explicit rate of
convergence, which is to our knowledge a new result, and that the
convergence in eq.  \eqref{eq:H1 convergence - intro} is stronger than
uniform convergence. In fact, we give corresponding results for the
rate of convergence of higher derivatives of $f$ as long as the
initial condition is regular enough. We note that this rate is optimal
in general, as can be seen from the evolution of the moment of order
$0$ of $f$, $\int f$. These results, and the methods we use to arrive
at them, are best described in terms of a rescaled form of eq.
(\ref{eq:coag-eq}), which we introduce next.

\subsection{The self-similar equation}
\label{sec:self-similar-eq}

A \emph{scaling solution} or \emph{self similar solution} of
Smoluchowski's equation is a solution $f$ to eq. (\ref{eq:coag-eq})
which is a rescaling of some fixed function $g$ for any time $t$:
\begin{equation}
  \label{eq:def-scaling}
  f(t,y) = p(t) g(q(t) y)
  \qquad (t, y > 0),
\end{equation}
where $p, q > 0$ are functions of time. When such a solution exists,
$g$ is called a \emph{scaling profile} or \emph{self-similar
  profile}.

Let us briefly describe the problem of the scaling behavior of
Smoluchowski's equation for a general coagulation rate, and then we
will focus on the problem for a constant rate. It is expected that for
a general class of homogeneous coagulation kernels, there is a unique
self-similar profile with given mass $\rho > 0$, and that for a very
wide class of initial conditions solutions to Smoluchowski's equation
approximate, for large times, the self-similar solution with the same
mass, in a sense to be precised. For general homogeneous coagulation
rates there are no rigorous proofs of this behavior, except for recent
results that have shown the existence of self-similar profiles
\cite{citeulike:1172261,MR2114413} and given some of their properties
\cite{citeulike:1271620,citeulike:1069567}.  Nevertheless, for the
specific coagulation rates given by $a(x,y) = x + y$ and $a(x,y) = 1$
(our case), it is known that the self-similar profile is unique for
each given finite mass, and the convergence to it has been proved
in~\cite{citeulike:1103487, citeulike:1103441,
  MR2139564,citeulike:1069613}. For $a(x,y) = 1$, the profile
corresponding to a mass $\rho$ is the function $g_\rho$ mentioned in
Theorem \ref{thm:uniform convergence - intro}, and the convergence to
it holds at least in the sense of eq.  (\ref{eq:MP - uniform
  convergence}) under quite general conditions on the initial data.

In our case, if $f$ is a self-similar solution of eq.
(\ref{eq:coag-eq}), then it is known that the scaling in eq.
(\ref{eq:def-scaling}) is determined: it must happen that for some
$\tau \in \RR$,
\begin{equation*}
  f(t+\tau, y)
  =
  \frac{1}{(1+t)^2}\,
  g \Big( \frac{y}{1+t} \Big)
  \quad \text{ for } t \geq 0, y > 0.
\end{equation*}
Hence, up to time translations, we can consider that all self-similar
solutions are of this form. The following change of variables is then
useful: if $f$ is a solution of eq. (\ref{eq:coag-eq}) for $t \in
[0,\infty)$, we set
\begin{equation}
  \label{eq:change-forward}
  g(t,y) := e^{2t} f\big( e^{t} - 1,\, e^t y \big)
  \quad \text{ for } t \geq 0, y > 0,
\end{equation}
whose inverse is
\begin{equation}
  \label{eq:change-backward}
  f(t,y) = \frac{1}{(1+t)^2} \,
  g \Big( \log (1+t),\,
  \frac{y}{1+t} \Big)
  \quad \text{ for } t \geq 0, y > 0.
\end{equation}
The fact that $f$ is a solution of eq. (\ref{eq:coag-eq}) is
equivalent to $g$ satisfying the following \emph{self-similar
  coagulation equation}:
\begin{align}
  \label{eq:ss-coag-lambda=0}
  \partial_t g
  = & 2g + y \partial_y g
  + C(g, g)
  \\
  \label{eq:ss-coag-lambda=0-operator}
  =: & D(g) +  C(g, g).
\end{align}
Through this change of variables, self-similar solutions $f$ of eq.
(\ref{eq:coag-eq}) correspond, up to a time translation, to stationary
solutions of eq.~\eqref{eq:ss-coag-lambda=0}. Given this equivalence,
we study eq.~\eqref{eq:ss-coag-lambda=0}, which is more convenient
when considering the scaling behavior of solutions.

Let us be more precise as to what we mean by a solution to
eq.~(\ref{eq:ss-coag-lambda=0}). Below we denote our functional spaces
by
\begin{equation*}
  L^1 := L^1(0,\infty)
  \qquad
  L^1_1 := L^1((0,\infty); y\,dy).
\end{equation*}
%
\begin{dfn}
  For any $g, h \in L^1$, we define $C(g,h)$ by
  \begin{equation}
    \label{eq:dfn-bilinear-C}
    \begin{split}
      C(g,h)(y) := &\frac{1}{2} \int_0^y g(y') h(y-y') \,dy
      \,dy'
      \\
      & - \frac{1}{2} \int_0^\infty g(y) h(y') \,dy'
      \\
      & - \frac{1}{2} \int_0^\infty h(y) g(y') \,dy'.
    \end{split}
  \end{equation}
  Notice that this is in agreement with the expression in eq.
  (\ref{eq:coag_operator}), and that this operator is symmetric:
  $C(g,h) = C(h,g)$ for any $g$, $h$.
\end{dfn}
\begin{dfn}
  A \emph{solution} of eq. (\ref{eq:ss-coag-lambda=0}) on $[0,\infty)$
  is a nonnegative function $g \in \mathcal{C}([0,\infty); L^1 \cap
  L^1_1)$ such that eq. (\ref{eq:ss-coag-lambda=0}) holds in the
  following sense: for all $\phi \in \mathcal{C}_0^\infty$,
  $g$ satisfies
  \begin{multline}
    \label{eq:def-solution}
    \int_0^\infty g(t,y) \phi(y) \,dy
    =
    \int_0^\infty g(0,y) \phi(y) \,dy
    +
    \int_0^t \int_0^\infty g(s,y) \phi(y) \,dy \,ds
    \\
    - \int_0^t \int_0^\infty y\,  g(s,y) \phi'(y) \,dy\,ds
    \\
    + \int_0^t \int_0^\infty C(g(s),g(s))(y) \phi(y) \,dy\,ds
    \quad
    \text{ for } t > 0.
  \end{multline}
\end{dfn}
The results in \cite{citeulike:1069567} or \cite{citeulike:1103441}
prove that for any nonnegative $g^0 \in L^1 \cap L^1_1$ there exists a
unique solution to eq. (\ref{eq:ss-coag-lambda=0}) in the above sense
with the initial condition $g(0,y) = g^0(y)$ for $y > 0$.

Eq. \eqref{eq:ss-coag-lambda=0} has a one-parameter family of
stationary solutions $\{g_\rho\}_{\rho > 0}$ which is explicitly
given by
\begin{equation}
  \label{eq:stationary-sols-lambda=0}
  g_\rho(y) :=
  \frac{4}{\rho} e^{-\frac{2y}{\rho}}.
\end{equation}
The solution $g_\rho$ has mass (first moment) $\rho$ and number of
particles (zeroth moment) equal to 2. If we write the result in eq.
(\ref{eq:MP - uniform convergence}) in terms of a solution $g$ to eq.
(\ref{eq:ss-coag-lambda=0}), then it means that
\begin{equation*}
  \lim_{t \to \infty}
  \sup_{y > 0}
  \left\{
    y\abs{g(t,y) - g_\rho(y)}
  \right\}
  = 0
  .
\end{equation*}
Some regularity conditions need to be imposed on the initial data for
the latter convergence to hold; we refer the reader to
\cite{MR2139564}, where the result is proved, for details.

\subsection{Main results}
\label{sec:main-results}

Our main objective is to give an explicit exponential rate of
convergence to equilibrium for solutions to the above equation in
various norms. Explicitly, for an integrable function $h:(0,\infty)
\to \RR$, a positive number $\mu$ and an integer $k \geq -1$ we
consider the norms
\begin{equation}
  \label{eq:(k,mu) norm-intro}
  \norm{h}_{k,\mu}^2
  := \int_0^\infty y^{2(k+1)} (D^k h(y))^2 e^{\mu y} \,dy,
\end{equation}
where $D^k h$ represents the $k$-th derivative of $h$ and $D^{-1} h$
denotes the primitive of $h$ given by
\begin{equation}
  \label{eq:dfn-H-intro}
  D^{-1} h (y) \equiv -H(y) := -\int_y^\infty h(x) \,dx.
\end{equation}
Note that the subindex $\mu$ indicates the exponential weight, and not
the power of the function as is usual when considering $L^p$ spaces.
The latter norm with $\mu = 2/\rho$ is directly suggested by the
quadratic approximation to the following \emph{relative entropy}
functional near a stationary solution, and is the initial motivation
for it:
\begin{equation}
  \label{eq:relative-entropy}
  F[g \vert g_\rho] :=
  \int_0^\infty \left(
    g(y)
    \left( \log\frac{g(y)}{g_\rho(y)} - 1 \right)
    + g_\rho(y)
  \right) \,dy.
\end{equation}

We prove both local and global convergence results; among the local
ones we have the following, proved in Section \ref{sec:local}:
\begin{thm}
  \label{thm:local exponential convergence-intro}
  Let $g$ be a solution to eq. \eqref{eq:ss-coag-lambda=0} with mass
  $\rho$ and nonnegative initial condition $g^0$ at $t=0$. Take $0 <
  \mu < \nu < 2/\rho$ and an integer $k \geq -1$. Choose $\epsilon >
  0$, and assume that
  \begin{enumerate}
  \item The exponential moment of order $\nu/2$ is bounded by a
    constant $E > 0$ for all $t \geq 0$; this is,
    \begin{equation*}
      \int_0^\infty \abs{g(t,y)} e^{\frac{\nu}{2} y} \,dy
      \leq
      E
      \quad \text{ for all } t \geq 0.
    \end{equation*}
  \item \label{eq:entropy condition}
    The relative entropy $F[g^0 | g_\rho]$ is less than $\epsilon$.
  \end{enumerate}
  Then, for any $0 < \delta < 1$ there is an $\epsilon_0 =
  \epsilon_0(\rho, E, \delta, k, \mu, \nu)$ such that if $\epsilon
  \leq \epsilon_0$ then
  \begin{equation}
    \label{eq:local-convergence-(k,mu)}
    \norm{g(t)-g_\rho}_{k,\mu}
    \leq
    K \norm{g^0 - g_\rho}_{k,\nu} e^{-\delta t}
    \quad
    \text{ for } t \geq 0,
  \end{equation}
  for some constant $K = K(\rho, E, \delta, k, \mu, \nu)$. When
  $k=-1$, the same is true with $\norm{g^0 -
    g_\rho}_{-1,\mu}$ instead of $\norm{g^0 - g_\rho}_{-1,\nu}$:
  \begin{equation*}
    \norm{g(t)-g_\rho}_{-1,\mu}
    \leq
    K \norm{g^0 - g_\rho}_{-1,\mu} e^{-\delta t}
    \quad
    \text{ for } t \geq 0.
  \end{equation*}
  Observe that these results are meaningful when $\norm{g^0}_{k,\nu}$,
  or $\norm{g^0}_{k,\mu}$ in this last case, are finite.
\end{thm}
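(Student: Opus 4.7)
The plan is to linearize \eqref{eq:ss-coag-lambda=0} around $g_\rho$. Writing $g(t) = g_\rho + h(t)$, the perturbation $h$ solves
\begin{equation*}
\partial_t h = \mathcal{L} h + C(h,h),
\qquad
\mathcal{L} h := D h + 2 C(g_\rho, h),
\end{equation*}
and since both $g$ and $g_\rho$ have mass $\rho$, mass conservation enforces $\int_0^\infty y\,h(t,y)\,dy = 0$ for all $t$. The argument then combines three ingredients: a spectral gap for $\mathcal{L}$ on the zero-mass subspace, a bilinear continuity estimate for $C$ in the weighted Sobolev norms, and a smallness bound on $h$ propagated from the entropy hypothesis.

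For the first two ingredients, I would invoke the spectral analysis of $\mathcal{L}$ (presumably carried out in an earlier section via the Laplace transform, which diagonalizes the linearized coagulation operator in the constant-kernel case) to locate its spectrum in $\{-n : n \in \mathbb{N}_0\}$. After removing the trivial $0$-mode using the zero-mass constraint on $h$, this yields a semigroup estimate
\begin{equation*}
\norm{e^{t\mathcal{L}} h_0}_{k,\mu} \leq C_1 e^{-\delta t}\,\norm{h_0}_{k,\nu}, \qquad 0<\delta<1,
\end{equation*}
where the small weight loss $\nu\to\mu$ absorbs the subprincipal contributions in the functional calculus; for $k=-1$ this loss is avoidable by working directly with the primitive from \eqref{eq:dfn-H-intro}, as the equation becomes essentially explicit at that level, which is why the $k=-1$ statement uses $\norm{\cdot}_{-1,\mu}$ on the right-hand side. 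On the nonlinear side, a direct estimate splitting $e^{\mu y}=e^{\mu(y-x)}e^{\mu x}$ inside the convolution piece of $C$ and interpolating derivatives via Leibniz gives
\begin{equation*}
\norm{C(h_1,h_2)}_{k,\mu} \leq C_2\bigl(\norm{h_1}_{k,\mu}\norm{h_2}_{0,\mu} + \norm{h_1}_{0,\mu}\norm{h_2}_{k,\mu}\bigr).
\end{equation*}
Feeding both into Duhamel's formula and setting $u(t):=e^{\delta t}\norm{h(t)}_{k,\mu}$, one arrives at
\begin{equation*}
u(t) \leq C_1\norm{h(0)}_{k,\nu} + 2 C_1 C_2 \Bigl(\sup_{s\leq t}\norm{h(s)}_{0,\mu}\Bigr)\int_0^t u(s)\,ds,
\end{equation*}
so a Gronwall argument closes the estimate provided the $\norm{\cdot}_{0,\mu}$-norm of $h$ is uniformly small in time.

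For the third ingredient, I use that the relative entropy $F[\,\cdot\,|g_\rho]$ is a known Lyapunov functional for \eqref{eq:ss-coag-lambda=0}, so the second hypothesis propagates to $F[g(t)|g_\rho]\leq\epsilon$ for all $t\geq 0$; a Csisz\'ar--Kullback-type inequality, interpolated against the uniform exponential moment of the first hypothesis, then produces $\sup_t \norm{h(t)}_{0,\mu} \leq \eta(\epsilon)$ with $\eta(\epsilon)\to 0$ as $\epsilon\to 0$. Choosing $\epsilon_0$ small enough that $2C_1C_2\eta(\epsilon_0)$ lies strictly below the spectral gap margin (or, equivalently, by a standard continuity argument on $u$) yields the advertised bound \eqref{eq:local-convergence-(k,mu)}. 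The main technical obstacle is the derivation of the weighted-Sobolev spectral gap with quantitative control of the loss $\nu\to\mu$, which relies on the explicit Laplace-transform structure of the constant-kernel equation; the bilinear estimate is routine once organized by Leibniz, while the transfer from small entropy to smallness in the exponentially-weighted $\norm{\cdot}_{0,\mu}$-norm is classical but must be made quantitatively uniform in $t$ using the propagated exponential moment.
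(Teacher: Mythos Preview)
Your overall architecture---linearize as $\partial_t h = \mathcal{L}h + C(h,h)$, apply Duhamel, and close via a spectral gap plus a bilinear bound plus propagated smallness of $h$---matches the paper's. But two of your three ingredients are stated in a form that makes the argument fail.

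First, the bilinear estimate $\norm{C(h_1,h_2)}_{k,\mu} \leq C_2\bigl(\norm{h_1}_{k,\mu}\norm{h_2}_{0,\mu} + \norm{h_1}_{0,\mu}\norm{h_2}_{k,\mu}\bigr)$ cannot hold: the convolution piece of $C$ obeys Young's inequality $\norm{f*g}_{L^2}\leq\norm{f}_{L^2}\norm{g}_{L^1}$, not $\norm{f}_{L^2}\norm{g}_{L^2}$, and the weight $y^{2}$ in $\norm{\cdot}_{0,\mu}$ does not embed into $L^1$ near $y=0$. The estimate the paper proves puts a weighted $L^1$ norm on one factor: for $k=-1$, $\norm{C(h,h)}_{-1,\mu}\leq K\norm{h}_{-1,\mu}\int\abs{h}e^{\frac{\mu}{2}y}$, while for $k\geq 0$ there is in addition a lower-order remainder $K\norm{h}_{k-1,\nu}^2$ coming from the middle Leibniz terms. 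That remainder, handled by induction on $k$, is the actual source of the weight loss $\nu\to\mu$; the linear semigroup itself satisfies $\norm{e^{t\mathcal{L}}h_0}_{k,\mu}\leq K\norm{h_0}_{k,\mu}e^{-t}$ with no loss.

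Second, and tied to this, your smallness mechanism does not close: Csisz\'ar--Kullback turns small entropy into small $\norm{h}_{L^1}$, and the exponential-moment hypothesis is again an $L^1$ bound. No interpolation between two $L^1$ quantities can make the $L^2$-type norm $\norm{h}_{0,\mu}$ small. What does work is to control exactly the weighted $L^1$ norm that the correct bilinear estimate produces: H\"older between $\norm{h(t)}_{L^1}$ (small from entropy, uniformly in $t$ since the entropy is nonincreasing) and $\int\abs{h(t)}e^{\frac{\nu}{2}y}$ (uniformly bounded by hypothesis) gives $\int\abs{h(t)}e^{\frac{\mu}{2}y}\leq\eta(\epsilon)$ for all $t$, and then the Gronwall step closes just as you describe.

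As a side remark, the paper does not obtain the spectral gap from the Laplace transform. It linearizes the Aizenman--Bak entropy inequality at $g_\rho$ to get the gap directly in $\norm{\cdot}_{-1,2/\rho}$, and then transfers it to general $(k,\mu)$ by a moment-creation property of the linear semigroup.
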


\begin{rem}
  Below we give intrinsic conditions on the initial data which are
  equivalent to point $1$ in the above theorem (see Section
  \ref{sec:behavior-moments}, and concretely eq.
  \eqref{eq:condition-exp-moment-uniformly-bounded}), so it is really
  a condition on $g^0$. In particular, it is satisfied if $g^0(y) \leq
  \nu e^{-\frac{\nu}{2} y}$ for all $y > 0$.
\end{rem}

Our main global convergence result can be stated as follows:

\begin{thm}
  \label{thm:global-convergence-intro}
  Let $g$ be a solution to eq. \eqref{eq:ss-coag-lambda=0} with mass
  $\rho$ and nonnegative initial condition $g^0$ such that $(g^0)' \in
  L^1$. Assume that, for some $\nu > 0$ and $k \geq -1$,
  \begin{equation*}
    \int_0^\infty g^0(y) e^{\nu y} \,dy < \infty
    \quad \text{ and }
    \quad
    \norm{g^0}_{k,\nu} < \infty.
  \end{equation*}
  Then there exists $0 < \mu < 2/\rho$ such that for any $0 < \delta <
  1$,
  \begin{equation}
    \label{eq:global-convergence}
    \norm{g(t)-g_\rho}_{k,\mu}
    \leq
    K e^{-\delta t}
    \quad
    \text{ for } t \geq 0
  \end{equation}
  for some number $K$ which depends in an explicit way on $k, \nu,
  \delta$ and $g^0$.
\end{thm}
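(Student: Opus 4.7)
The plan is to reduce the global statement to the local one (Theorem \ref{thm:local exponential convergence-intro}) by a standard two-step "approach-and-linearize" argument: wait long enough that the solution enters the smallness regime governed by the local theorem, and then apply the local result from that time on. Throughout we work in the self-similar variable, and we may clearly reduce to the case $\nu < 2/\rho$ by shrinking $\nu$ if necessary, and then fix $\mu$ with $0 < \mu < \nu$.

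First I would establish two uniform-in-time a priori estimates. The first is propagation of an exponential moment: for the constant-kernel coagulation equation in self-similar variables, the hypothesis $\int g^0 e^{\nu y}\,dy < \infty$ propagates to a uniform bound $\int g(t,y) e^{\nu' y}\,dy \leq E$ for all $t \geq 0$ and some $\nu' > 0$ (one checks this directly on the equation, or via the explicit Laplace-transform formula for the linear and nonlinear evolution alluded to in the abstract, using that the drift term $D(g) = 2g + y\partial_y g$ and the quadratic term $C(g,g)$ interact favorably with exponential weights). The second is propagation of the Sobolev-type norm: under the hypothesis $\|g^0\|_{k,\nu} < \infty$ and $(g^0)' \in L^1$, one shows $\sup_{t \geq 0} \|g(t)\|_{k,\nu'} < \infty$ by differentiating the equation $k$ times and running standard energy estimates against the weight $y^{2(k+1)} e^{\nu' y}$, the bilinear term being controlled by the propagated exponential moment.

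Second, I would use the already-established (non-quantitative) convergence in the literature (for instance the convergence \eqref{eq:MP - uniform convergence} from \cite{MR2139564}, or the entropy convergence from \cite{citeulike:1069613}) together with the uniform moment bound, to conclude $F[g(t) \vert g_\rho] \to 0$ as $t \to \infty$. Hence, given the smallness threshold $\epsilon_0 = \epsilon_0(\rho, E, \delta, k, \mu, \nu')$ furnished by Theorem \ref{thm:local exponential convergence-intro}, one can fix a time $T = T(g^0, \delta, k, \mu, \nu') \geq 0$ such that $F[g(T) \vert g_\rho] \leq \epsilon_0$. Applying the local theorem to the shifted solution $\tilde g(t,y) := g(T+t,y)$ (whose initial condition satisfies both the uniform moment assumption with constant $E$ and the entropy smallness assumption) yields
\begin{equation*}
  \|g(T+t) - g_\rho\|_{k,\mu}
  \leq K \|g(T) - g_\rho\|_{k,\nu'} e^{-\delta t}
  \quad\text{ for all } t \geq 0.
\end{equation*}
Combining with the uniform bound $\|g(t) - g_\rho\|_{k,\nu'} \leq C$ on the compact interval $[0,T]$ (from the a priori estimate above) produces the global estimate \eqref{eq:global-convergence}, absorbing the constants and $e^{\delta T}$ into a single $K$.

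The main obstacle will be the uniform propagation of the weighted Sobolev norm $\|g(t)\|_{k,\nu'}$: because the drift $D(g)$ produces a boundary-type term when integrated against $y^{2(k+1)} e^{\nu' y}$, and because the quadratic term $C(g,g)$ must be controlled in the same weighted $H^k$-norm, one has to be careful in choosing $\nu' < \nu$ so that the exponential and polynomial weights are absorbed by the dissipative contribution coming from $D$ and by the propagated $L^1$ exponential moment. Once this (technical but standard) estimate is in place, everything else is bookkeeping: choosing $\mu < \nu' < 2/\rho$, identifying $T$ via the pre-existing convergence results, and patching the short-time and long-time estimates together.
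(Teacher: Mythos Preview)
Your two-step ``approach-and-linearize'' strategy is exactly the one the paper follows: wait until the solution is close enough to $g_\rho$ in relative entropy, then invoke Theorem~\ref{thm:local exponential convergence-intro}. The propagation of the exponential moment is handled in the paper precisely via the explicit Laplace-transform formula you allude to (Section~\ref{sec:behavior-moments}, especially Lemma~\ref{lem:exp-moment-finite}).

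There is, however, one genuine gap. The theorem asserts that $K$ ``depends in an explicit way'' on the data, and the paper stresses throughout that all constants are constructive. Your proposal to use the non-quantitative convergence results of \cite{MR2139564} or \cite{citeulike:1069613} to conclude $F[g(t)\vert g_\rho]\to 0$ yields a time $T$ that is \emph{not} explicit in terms of $g^0$; you would then only obtain the qualitative statement that some $K$ exists, not a constructive one. The paper addresses exactly this point: rather than quoting soft convergence, it proves a separate lemma (Lemma~\ref{lem:L2 explicit convergence}) giving $\norm{g(t)-g_\rho}_{L^2}\leq C e^{-t/2}$ with explicit $C$ and $T$, by direct estimates on the closed-form Fourier transform \eqref{eq:Fourier-explicit}. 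This explicit $L^2$ rate, combined with the uniformly bounded exponential moment, forces $F[g(t)\vert g_\rho]$ below any threshold $\epsilon_0$ at a computable time.

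A smaller point: you propose a uniform-in-time bound $\sup_{t\geq 0}\norm{g(t)}_{k,\nu'}<\infty$ as an a priori estimate. This is stronger than what is needed and harder to prove directly. The paper only uses that $\norm{g(t)-g_\rho}_{k,\mu}$ stays bounded on the \emph{bounded} interval $[0,t_0]$, which follows from the same differential inequality used in the proof of Theorem~\ref{thm:local exponential convergence-intro} without any smallness assumption (Gronwall on a finite interval). Aiming for the weaker, local-in-time bound will save you some work.
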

For $k = 1$, this implies Theorem \ref{thm:uniform convergence -
  intro} through the self-similar change of variables in eq.
(\ref{eq:change-forward}). When $k = -1$ or $0$, the condition
$\norm{g^0}_{k,\nu} < \infty$ is redundant, as it is already implied
by $\int g^0 e^{\nu y} < \infty$ and $g_0' \in L^1$.  We also
remark that all the constants in the previous theorems are
constructive and can be explicitly given by following their proofs.


  We point out that all the arguments in the paper can be in fact
  carried out with the norms
  \begin{equation}
    \label{eq:(k,mu) norm-alternative}
    \tnorm{h}_{k,\mu}^2
    := \int_0^\infty y^{2k} (D^k h(y))^2 e^{\mu y} \,dy
  \end{equation}
  for $k \geq 0$ and $\mu > 0$, and
  \begin{equation}
    \label{eq:(-1,mu)_norm-alternative}
    \tnorm{h}_{-1,\mu}^2 = \norm{h}_{-1,\mu}^2
    := \int_0^\infty H(y)^2 e^{\mu y} \,dy,
  \end{equation}
  as before. Notice that here the power weight is lower in all norms
  with $k \geq 0$. In this case, Theorems \ref{thm:local exponential
    convergence-intro} and \ref{thm:global-convergence-intro} are
  still true with the sole difference that one obtains an exponential
  rate of convergence with $\delta < 1/2$ instead of $\delta < 1$. In
  particular, one obtains a statement as that in Theorem
  \ref{thm:uniform convergence - intro} with a modified power weight
  and rate of convergence: in the conditions of Theorem
  \ref{thm:uniform convergence - intro}, and for $0 < \delta < 1/2$,
  \begin{equation*}
    \int_0^\infty \left(
      t^2 \partial_y (f(t, t y)) - \partial_y g_\rho
    \right)^2
    \,y^2 e^{\mu y}\,dy
    \leq
    K^2 t^{-2 \delta}
    \quad
    \text{ for all } t > 0,
  \end{equation*}
  and in particular,
  \begin{equation*}
    \sup_{y > 0} \left\{
      y \abs{
        t^2 f(t, t y) - g_\rho(y)
      }
    \right\}
    \leq
    K t^{-\delta}
    \quad
    \text{ for all } t > 0.
  \end{equation*}
  We have not included the calculations for the norms \eqref{eq:(k,mu)
    norm-alternative} with different power weights in order to make
  the paper more readable, but there are no difficulties in rederiving
  our results in this case.
  
  We also note that the result on exponential convergence in the $L^2$
  norm, with an exponential rate of convergence of $1/2$, can be
  arrived at by explicit computations on the solution of the Fourier
  transform of eq. \eqref{eq:coag-eq}. We do this in Lemma \ref{lem:L2
    explicit convergence}.

  These results strongly suggest that the behavior near $y=0$ of
  solutions to eq. \eqref{eq:coag-eq} is decisive when studying the
  speed of convergence of solutions to a self-similar one: it seems
  likely that one cannot obtain, for example, the convergence of
  derivatives at $y=0$, so some weight near $y=0$ is probably
  unavoidable.


The strategy of proof for our results stems from the relative
entropy functional defined in eq.  \eqref{eq:relative-entropy},
which was first introduced in the context of eq.
(\ref{eq:ss-coag-lambda=0}) in \cite{citeulike:1069613}, where it
was proved that
\begin{gather}
  \label{eq:entropy_g_decreasing}
  \frac{d}{dt} \frac{F[g(t,\cdot) \vert g_\rho]}{\int g(t,y)\,dy}
  \leq
  0,
  \quad \text{ and }
  \\
  \label{eq:entropy_G_decreasing}
  \frac{d}{dt} F[ G(t,\cdot) \vert G_\rho ]
  \leq
  0,
\end{gather}
for any solution $g$ of eq. \eqref{eq:ss-coag-lambda=0}, where $G$ and
$G_\rho$ are the primitives of $g$ and $g_\rho$ defined as in eq.
(\ref{eq:dfn-H-intro}). That is: the above expressions are Liapunov
functionals for this equation. If one wants to carry out the classical
method of studying the convergence to equilibrium in a neighborhood of
a stationary solution by looking at the spectral properties of the
linearization of the equation near such a solution, the above
functionals suggest that we consider the linearized operator in the
norm which appears in their quadratic approximation. For
eq. (\ref{eq:entropy_g_decreasing}), this norm is precisely
$\norm{\cdot}_{0,\frac{2}{\rho}}$; for the functional in
eq. (\ref{eq:entropy_G_decreasing}), it is
$\norm{\cdot}_{-1,\frac{2}{\rho}}$. We follow this path and study the
equation
\begin{equation}
  \label{eq:linear-ss0}
  \partial_t h = Lh := 2h + y \partial_y h + 2 C(h,g_\rho),
\end{equation}
where $L$ is the linearization of the operator $D(g) + C(g,g)$ from
eq. (\ref{eq:ss-coag-lambda=0-operator}) near the stationary solution
$g_\rho$. We find that $L$, when restricted to functions $h$ such that
$\int_0^\infty y h(y) \,dy = 0$, has a spectral gap in the norm
$\norm{\cdot}_{k,\mu}$ for $0 < \mu \leq 2/\rho$ and integer $k \geq
-1$; see Sections \ref{sec:gap-H-1}--\ref{sec:spectral-gap-derivative}
for a precise statement (see also \cite{EM08} for a spectral study of
$L$ in an $L^1$ framework). This shows exponential convergence to $0$
for the linear equation (\ref{eq:linear-ss0}) in these norms, and can
also be used to prove exponential convergence to equilibrium for the
nonlinear equation (\ref{eq:ss-coag-lambda=0}) in a
close-to-equilibrium regime.

In order to treat solutions far from the equilibrium, the results of
\cite{MR2139564} were already available. However, as we are interested
in the rate of convergence, we prove a similar result which gives an
explicit rate of convergence in the $L^2$ norm, using estimates on the
explicit solution for the Fourier transform of
\eqref{eq:ss-coag-lambda=0} which resemble those in the proof of
uniform convergence in \cite{MR2139564}. With further estimates on the
derivatives of the explicit solution one could probably reproduce ``by
hand'' our results on the rate of convergence for the derivatives.
Nevertheless, the calculations soon become very involved, and in our
opinion do not add much to the understanding of the equation, while a
linear study proves to be less technical, and possibly better suited
for generalization to other coagulation kernels.

The paper is organized as follows: in Section \ref{sec:Functional
  inequalities} we derive our main functional inequality, obtained
from a linearization of an inequality of Aizenman and Bak
\cite{citeulike:1081959}, which is then used to prove that the linear
operator $L$ has a spectral gap in the norm
$\norm{\cdot}_{-1,2/\rho}$. In Section \ref{sec:spectral gaps} we
define precisely the linear operator $L$ in the spaces associated to
the norms mentioned above, and prove that it has a spectral gap in
them. Section \ref{sec:behavior-moments} can be read independently
from the rest of the paper; in it, we find an explicit expression for
the evolution of the exponential moments of solutions to eq.
(\ref{eq:ss-coag-lambda=0}). This is a remarkable piece of
information, which can be obtained because of the widely known fact
that the Laplace transform of eq. (\ref{eq:coag-eq}) is explicitly
solvable. In turn, this information on the exponential moments is
needed in the arguments of later sections. In Section \ref{sec:local}
we give local exponential convergence results for the nonlinear
equation (\ref{eq:ss-coag-lambda=0}) with the help of the linear study
from Section \ref{sec:spectral gaps}, and in particular we prove
Theorem \ref{thm:local exponential convergence-intro}. Finally, in
Section \ref{sec:global} we state and prove our global convergence
results, and in particular prove Theorem
\ref{thm:global-convergence-intro}.

\section{Functional inequalities}
\label{sec:Functional inequalities}

The study of the linearization of eq. (\ref{eq:ss-coag-lambda=0}) in
the norm $\norm{\cdot}_{-1,2/\rho}$ defined in eq.  \eqref{eq:(k,mu)
  norm-intro} is directly suggested by the Liapunov functional in eq.
(\ref{eq:entropy_G_decreasing}). Here we prove the necessary
inequality to show that the linearized operator $L$ (which we define
more precisely later) has a spectral gap in the norm
$\norm{\cdot}_{-1,2/\rho}$; more precisely,
\begin{equation}
  \label{eq:spectral-gap-L-mentioned}
  \ap{h, Lh}_{-1,\frac{2}{\rho}}
  \leq - \norm{h}^2_{-1,\frac{2}{\rho}}
\end{equation}
for any sufficiently regular function $h$ with $\int_0^\infty y h(y)
\,dy = 0$, where $\ap{\cdot,\cdot}_{-1,2/\rho}$ denotes the scalar
product associated to the norm $\norm{\cdot}_{-1,2/\rho}$; see
Proposition \ref{prp:spectral-gap} and Lemma \ref{lem:spectral gap
  X_-1,2/rho} for a more precise statement.

In order to prove this we use an inequality of Aizenman and Bak, which
was first introduced in their paper \cite{citeulike:1081959} to study
the convergence to equilibrium for eq. (\ref{eq:coag-eq}) with an
additional fragmentation term.  In \cite{citeulike:1069613},
Aizenman-Bak's inequality is also needed in the proof that the
relative entropy $F[G \vert G_\rho]$ is nonincreasing, and here we use
a quadratic approximation of it to show eq.
(\ref{eq:spectral-gap-L-mentioned}).

We will first prove a needed auxiliary inequality; then, we prove
eq. (\ref{eq:spectral-gap-L-mentioned}).

\subsection{Some notation}
\label{sec:notation}

Here we explain some of our notation conventions, and in particular
that for weighted $L^p$ spaces; for the specific spaces used in our
study of the linearized coagulation operator, see Section
\ref{sec:functional spaces}.

As most of our functions will be defined on $(0,\infty)$, we omit the
limits in integrals when this does not lead to confusion; thus, when
we omit the limits of integration, it is understood that integration
is on $(0,\infty)$; when we omit the variable, it is understood that
integration is on the $y$ variable. For example, $\int_0^\infty g(y)
\,dy$ is understood to be an integral over $y \in (0,\infty)$, and for
a solution $g = g(t,y)$ depending on time $t$ and size $y$, the
expression $\int g$ denotes $\int_0^\infty g(t,y) \,dy$. In the same
way, derivatives are with respect to the size variable unless
explicitly noted.

We use several spaces of functions on $(0,\infty)$, and we similarly
omit the explicit mention of the interval. Thus, we use:
\begin{equation}
  \label{eq:def-L1-L11}
  L^1 := L^1(0,\infty)
  \qquad
  L^1_k := L^1((0,\infty); y^k\,dy)
  \quad (k \in \RR).
\end{equation}
In general, for a function $y \mapsto m(y)$ and $1 \leq p < \infty$,
we write
\begin{equation}
  \label{eq:def-L1-relative}
  L^p(m(y)) := L^p((0,\infty); m(y)\,dy),
\end{equation}
to denote the space of functions $f$ such that $y \mapsto \abs{f(y)}^p
m(y)$ is integrable. Likewise, $\mathcal{C}$ and $\mathcal{C}^k$
denote the spaces of continuous and $k$-times continuously
differentiable functions on $(0,\infty)$, respectively;
$\mathcal{C}_0$, $\mathcal{C}^k_0$ are the corresponding spaces of
compactly supported functions on $(0,\infty)$.

For any function which is denoted $g$ or $h$, and which is integrable
on $(\epsilon, \infty)$ for any $\epsilon > 0$, we always write $G$,
$H$ to mean the primitives given by
\begin{equation}
  \label{eq:def-G-notation}
  G(y) := \int_y^\infty g(x) \,dx,
  \quad
  H(y) := \int_y^\infty h(x) \,dx
  \qquad (y > 0).
\end{equation}
The same is done for the pair $g_\rho$, $G_\rho$, where $g_\rho$ is
the stationary solution from eq. (\ref{eq:stationary-sols-lambda=0}).
Hence,
\begin{equation}
  \label{eq:expression_G_rho-notation}
  G_\rho(y)
  :=
  \int_y^\infty g_\rho(x) \,dx
  =
  2 e^{-\frac{2y}{\rho}}
  \qquad (y > 0)
  .
\end{equation}

\subsection{Aizenman-Bak's inequality and its linea\-ri\-za\-tion}

The following lemma can be found in \cite[Proposition
4.3]{citeulike:1081959}.

\begin{lem}[Aizenman-Bak inequality]
  \label{lem:Aizenman-Bak}
  Take $f: (0,\infty) \to \RR$ with $f \geq 0$, $f \in L^1$, and $f
  \log f \in L^1$. Then,
  \begin{multline}
    \label{eq:Aizenman-Bak}
    \int_0^\infty\!\!\! \int_0^\infty f(x) f(y) \log f(x+y) \,dx \,dy
    \\
    \leq
    \int_0^\infty\!\!\! \int_0^\infty f(x) f(y) \log f(y) \,dx\,dy
    - \left(\int_0^\infty f(x) \,dx \right)^2,
  \end{multline}
  and the equality is attained only for the exponential functions
  $f(y) = e^{-\mu y}$, with $\mu > 0$.
\end{lem}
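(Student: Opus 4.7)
The plan is to rewrite the desired inequality in the equivalent form
\begin{equation*}
  \int_0^\infty\!\!\!\int_0^\infty f(x)f(y) \log\frac{f(y)}{f(x+y)}\,dx\,dy \;\geq\; M^2,
\end{equation*}
where $M := \int_0^\infty f(y)\,dy$, and to attack it by bounding the inner $y$-integral for each fixed $x > 0$ via the log-sum (Gibbs) inequality $\int a \log(a/b) \geq (\int a)\log((\int a)/(\int b))$. The key observation is that the shifted tail $y \mapsto f(x+y)$ has total mass $F(x) := \int_x^\infty f(y)\,dy$ on $(0,\infty)$, while $f$ itself has total mass $M$, so applying log-sum with $a(y) = f(y)$ and $b(y) = f(x+y)$ produces the pointwise-in-$x$ lower bound
\begin{equation*}
  \int_0^\infty f(y) \log\frac{f(y)}{f(x+y)}\,dy \;\geq\; M \log\frac{M}{F(x)}.
\end{equation*}

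I would then multiply this by $f(x)$ and integrate over $x \in (0,\infty)$, reducing the problem to the evaluation of $\int_0^\infty f(x)\log F(x)\,dx$. Since $F'(x) = -f(x)$, the substitution $u = F(x)$ (with $u$ running from $M$ down to $0$) turns this into the elementary integral $\int_0^M \log u\,du = M\log M - M$, using $u\log u \to 0$ as $u \to 0^+$ at the endpoint. Plugging back in, the $M\log M$ terms cancel and the right-hand side collapses to exactly $M^2$, which is the claimed inequality.

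The main obstacle I anticipate is the integrability bookkeeping: the log-sum inequality produces extended-real values when $f$ vanishes on sets of positive measure, and one must carefully justify both the pointwise bound and the final integration against $f(x)\,dx$. This can be handled by a standard truncation and monotone approximation, using the hypotheses $f, f\log f \in L^1$ to control the passage to the limit. For the equality case, equality in log-sum forces $y \mapsto f(y)/f(x+y)$ to be constant for each fixed $x > 0$, so there exists $c(x) > 0$ with $f(x+y) = c(x)f(y)$; the standard solution of this multiplicative Cauchy equation, combined with measurability and the integrability of $f$ on $(0,\infty)$, forces $f(y) = A e^{-\mu y}$ with $A, \mu > 0$, recovering the stated equality cases.
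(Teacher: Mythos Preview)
The paper does not actually prove this lemma; it merely cites it as \cite[Proposition 4.3]{citeulike:1081959} and moves on. Your argument is a correct self-contained proof: the Gibbs/log-sum inequality applied in the inner variable gives the pointwise bound $\int f(y)\log(f(y)/f(x+y))\,dy \ge M\log(M/F(x))$, and after multiplying by $f(x)$ and integrating, the substitution $u=F(x)$ reduces the right-hand side to $\int_0^M \log u\,du = M\log M - M$, yielding exactly $M^2$. The equality analysis via the multiplicative Cauchy equation is also correct; note that it actually produces the full family $f(y)=Ae^{-\mu y}$ with $A>0$, which is slightly more than the paper's statement records (the inequality is invariant under $f\mapsto Af$, so this is the right equality set).
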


Now we expand this inequality to second order near an exponential
function to find the following:

\begin{lem}[Linearized Aizenman-Bak inequality]
  \label{lem:Linear-Aizenman-Bak}
  For any function $h \in L^2((1+y) e^{\mu y})$ and
  any $\mu > 0$, it holds that
  \begin{equation}
    \label{eq:Linear-Aizenman-Bak}
    4 \int_0^\infty h H e^{\mu y}
    \leq
    2\mu \left(\int h\right) \left(\int yh\right)
    + \int_0^\infty h^2 y e^{\mu y}
    + \frac{1}{\mu} \int_0^\infty h^2 e^{\mu y},
  \end{equation}
  where $H$ was given in (\ref{eq:def-G-notation}).
\end{lem}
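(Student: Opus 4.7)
The plan is to derive (\ref{eq:Linear-Aizenman-Bak}) as the second-order Taylor coefficient of Lemma \ref{lem:Aizenman-Bak} around the minimizer $f_0(y) := e^{-\mu y}$. Introduce the defect functional
\[
\Psi[f] := \int\!\!\int f(x) f(y) \bigl[\log f(y) - \log f(x+y)\bigr] \,dx\,dy - \Bigl(\int f\Bigr)^2,
\]
so that (\ref{eq:Aizenman-Bak}) reads $\Psi[f] \geq 0$, with equality at $f_0$. Setting $f_\epsilon := f_0 + \epsilon h$ for $h \in \mathcal{C}_0^\infty$ and $|\epsilon|$ small enough that $f_\epsilon \geq 0$, the map $\epsilon \mapsto \Psi[f_\epsilon]$ is a smooth nonnegative function of $\epsilon$ vanishing at $\epsilon=0$; hence its second derivative there is nonnegative. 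Once one unpacks this inequality it is precisely (\ref{eq:Linear-Aizenman-Bak}).

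To unpack, I would use the expansions
\[
\log f_\epsilon(y) = -\mu y + \epsilon h(y) e^{\mu y} - \tfrac{\epsilon^2}{2} h(y)^2 e^{2\mu y} + O(\epsilon^3),
\]
\[
f_\epsilon(x) f_\epsilon(y) = f_0(x) f_0(y) + \epsilon\bigl[h(x) f_0(y) + f_0(x) h(y)\bigr] + \epsilon^2 h(x) h(y),
\]
multiply them out, and collect the $\epsilon^2$ coefficient. As a sanity check the $\epsilon^0$ terms yield $\Psi[f_0] = 0$ (using $\int f_0 = 1/\mu$, $\int y f_0 = 1/\mu^2$), and the $\epsilon^1$ coefficient vanishes after the substitution $z = x+y$, consistently with $f_0$ being a minimum.

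The substance of the proof is the second-order bookkeeping. The $\epsilon^2$-piece of $\Psi[f_\epsilon]$ splits, after the substitution $z = x+y$ (which contributes a factor $z$ from the length of the simplex $\{x,y>0: x+y=z\}$), into three families:
\begin{itemize}
\item The pure log-quadratic pieces $\tfrac{1}{2}\!\int\!\!\int f_0(x) f_0(y) \bigl[h(x+y)^2 e^{2\mu(x+y)} - h(y)^2 e^{2\mu y}\bigr]\,dx\,dy$, which become $\tfrac{1}{2}\int y h^2 e^{\mu y} - \tfrac{1}{2\mu} \int h^2 e^{\mu y}$.
\item The convolution-type cross pieces $\int\!\!\int h(x) f_0(y) h(x+y) e^{\mu(x+y)}\,dx\,dy$ and its symmetric twin, which turn into $\int h H e^{\mu y}$; these give the crucial $-2\int h H e^{\mu y}$ contribution.
\item The background-times-quadratic pieces, yielding $\mu (\int h)(\int y h)$ and $(\int h)^2$; the latter is cancelled exactly by the $\epsilon^2$ coefficient of $(\int f_\epsilon)^2$.
\end{itemize}
Multiplying the resulting second-order inequality by $2$ gives (\ref{eq:Linear-Aizenman-Bak}). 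The restriction to $h \in \mathcal{C}_0^\infty$ is then removed by a density argument, since every term of (\ref{eq:Linear-Aizenman-Bak}) defines a continuous quadratic form on $L^2((1+y) e^{\mu y})$.

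\textbf{The main obstacle} is the second-order bookkeeping itself: in particular, the cross term $\int\!\!\int h(x) f_0(y) h(x+y) e^{\mu(x+y)}\,dx\,dy$ must be correctly converted into the primitive integral $\int h H e^{\mu y}$ via $z = x+y$, and the $y$-weight in the leading term of (\ref{eq:Linear-Aizenman-Bak}) must be tracked to the simplex Jacobian. Justifying the formal expansion pointwise when $f_\epsilon$ may approach zero is a minor issue, resolved by working first with $\mathcal{C}_0^\infty$ perturbations and then passing to $L^2((1+y) e^{\mu y})$ by density, since the target inequality is purely quadratic in $h$.
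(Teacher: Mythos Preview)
Your approach is exactly the paper's: perturb $f_\epsilon = e^{-\mu y} + \epsilon h$, use that Aizenman--Bak attains equality at $\epsilon=0$ so the second variation is nonnegative, compute the $\epsilon^2$ coefficient, and extend by density. One bookkeeping slip in your bullet tally: the cross pieces from the $\log f(y)$ side also produce a term $\int\!\!\int f_0(x)\,h(y)^2 e^{\mu y}\,dx\,dy = \tfrac{1}{\mu}\int h^2 e^{\mu y}$, which flips the sign of your $-\tfrac{1}{2\mu}\int h^2 e^{\mu y}$ to $+\tfrac{1}{2\mu}\int h^2 e^{\mu y}$ and is needed for the final inequality to match (\ref{eq:Linear-Aizenman-Bak}).
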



\begin{proof}
  Fix $\mu > 0$, and take a continuous function $h$ with compact
  support on $(0,\infty)$. For small enough $\epsilon > 0$, the
  function
  \begin{equation}
    \label{eq:def-f_epsilon}
    f_\epsilon(y) := e^{-\mu y} + \epsilon h(y)
    \qquad (y > 0)
  \end{equation}
  is in the conditions of Lemma~\ref{lem:Aizenman-Bak}, so
  Aizenman-Bak's inequality holds for the function $f_\epsilon$, for
  all small enough $\epsilon$. One can easily check that both sides of
  the inequality are differentiable in $\epsilon$. As the equality is
  attained at $\epsilon = 0$, it must hold that
  \begin{equation}
    \label{eq:dd-0}
    \frac{d^2}{d\epsilon^2} \Big |_{\epsilon=0} I_1
    \leq
    \frac{d^2}{d\epsilon^2} \Big |_{\epsilon=0} I_2,
  \end{equation}
  where $I_1$ and $I_2$ are the left and right hand sides of
  Aizenman-Bak's inequality for $f_\epsilon$, respectively. This will
  give the inequality we are interested in, so let us calculate these
  terms.

  \paragraph{Step 1: First order derivative.}

  Let us calculate the first order derivative of all terms in the
  inequality. The regularity of $f_\epsilon$ justifies the derivation
  under the integral sign, and we obtain the following for small
  enough $\epsilon$:
  \begin{multline}
    \label{eq:d-1}
    \frac{d}{d\epsilon} I_1
    =
    \frac{d}{d\epsilon}
    \int_0^\infty\!\!\! \int_0^\infty
    f_\epsilon(x) f_\epsilon(y) \log f_\epsilon(x+y) \,dx \,dy
    \\
    =
    2 \int_0^\infty\!\!\! \int_0^\infty
    h(x) f_\epsilon(y) \log f_\epsilon(x+y) \,dx\,dy
    \\
    +
    \int_0^\infty\!\!\! \int_0^\infty
    f_\epsilon(x) f_\epsilon(y) \frac{h(x+y)}{f_\epsilon(x+y)}
    \,dx\,dy
    =: I_{11} + I_{12}.
  \end{multline}
  For the first term in $I_2$,
  \begin{multline}
    \label{eq:d-2}
    \frac{d}{d\epsilon}
    \int_0^\infty\!\!\! \int_0^\infty
    f_\epsilon(x) f_\epsilon(y) \log f_\epsilon(y) \,dx\,dy
    \\
    =
    \int_0^\infty h(x)
    \int_0^\infty f_\epsilon(y) \log f_\epsilon(y) \,dy \,dx
    \\
    +
    \int_0^\infty f_\epsilon(x)
    \int_0^\infty h(y) \log f_\epsilon(y) \,dy \,dx
    \\
    +
    \left( \int h \right) \left( \int f_\epsilon \right)
    =:
    I_{21} + I_{22} + I_{23}.
  \end{multline}
  And for the last term in $I_2$ we have
  \begin{equation}
    \label{eq:d-3}
    - \frac{d}{d\epsilon}
    \left(\int_0^\infty f_\epsilon(x) \,dx \right)^2
    =
    - 2  \left( \int f_\epsilon \right)  \left( \int h \right)
    =: I_{24}.
  \end{equation}

  \paragraph{Step 2: Second order derivative.}

  We have:
  \begin{multline}
    \label{eq:dd-1}
    \frac{d}{d\epsilon} \Big |_{\epsilon = 0} I_{11}
    =
    -2 \mu \int_0^\infty\!\!\! \int_0^\infty
    h(x) h(y) (x+y) \,dx\,dy
    \\
    +
    2 \int_0^\infty\!\!\! \int_0^\infty
    h(x) e^{\mu x} h(x+y)
    \,dx \,dy
    \\
    =
    -4 \mu \left( \int h \right) \left( \int y h \right)
    + 2 \int h H e^{\mu y}
  \end{multline}

  \begin{multline}
    \label{eq:dd-2}
    \frac{d}{d\epsilon} \Big |_{\epsilon = 0} I_{12}
    =
    2 \int_0^\infty\!\!\! \int_0^\infty
    h(x) e^{\mu x} h(x+y)
    \,dx \,dy
    \\
    -
    \int_0^\infty\!\!\! \int_0^\infty
    e^{\mu(x+y)} h(x+y)^2
    \,dx \,dy
    \\
    =
    2 \int h H e^{\mu y}
    - \int h^2 \, y e^{\mu y}.
  \end{multline}

  \begin{equation}
    \label{eq:dd-3}
    \frac{d}{d\epsilon} \Big |_{\epsilon = 0} I_{21}
    =
    - \mu \left( \int h \right) \left( \int y h \right)
    + \left( \int h \right)^2.
  \end{equation}

  \begin{equation}
    \label{eq:dd-4}
    \frac{d}{d\epsilon} \Big |_{\epsilon = 0} I_{22}
    =
    - \mu \left( \int h \right) \left( \int y h \right)
    + \frac{1}{\mu} \int h^2 e^{\mu y}.
  \end{equation}

  \begin{equation}
    \label{eq:dd-5}
     \frac{d}{d\epsilon} \Big |_{\epsilon = 0} I_{23}
     = \left( \int h \right)^2.     
  \end{equation}

  \begin{equation}
    \label{eq:dd-6}
    \frac{d}{d\epsilon} \Big |_{\epsilon = 0} I_{24}
    = -2 \left( \int h \right)^2.     
  \end{equation}

  \paragraph{Step 3: Final inequality.}

  Putting together eqs. \eqref{eq:dd-1} and \eqref{eq:dd-2} we have
  \begin{equation}
    \label{eq:dd-I_1}
    \frac{d^2}{d\epsilon^2} \Big |_{\epsilon=0} I_1
    =
    4 \int h H e^{\mu y}
    - 4 \mu \left( \int h \right) \left( \int y h \right)
    - \int h^2 \, y e^{\mu y},
  \end{equation}
  and eqs. \eqref{eq:dd-3}--\eqref{eq:dd-6} show that
  \begin{equation}
    \label{eq:dd-I_2}
    \frac{d^2}{d\epsilon^2} \Big |_{\epsilon=0} I_2
    =
    - 2 \mu \left( \int h \right) \left( \int y h \right)
    + \frac{1}{\mu} \int h^2 e^{\mu y}.
  \end{equation}

  Finally, writing out the inequality in eq. \eqref{eq:dd-0} in view
  of eqs. \eqref{eq:dd-I_1}--\eqref{eq:dd-I_2}, we obtain precisely
  eq. \eqref{eq:Linear-Aizenman-Bak}. This shows our inequality for
  any $h \in \mathcal{C}_0(0,\infty)$, and taking a suitable limit of
  such functions shows the inequality for any $h \in L^2((1+y) e^{\mu
    y})$ (note that for such an $h$ all terms in the inequality are
  finite).
\end{proof}

\subsection{Spectral gap inequality}

Take $\rho > 0$, which will be kept fixed in the rest of this section.
One can linearize the self-similar eq. \eqref{eq:ss-coag-lambda=0}
around the profile $g_\rho$ with mass $\rho$ (given in
eq. (\ref{eq:stationary-sols-lambda=0})) to obtain $\partial_t h =
Lh$, with $L$ given by the following definition:
\begin{dfn}
  \label{dfn:L-prev}
  For a function $h \in \mathcal{C}^1_0$, we set
  \begin{equation}
    \label{eq:dfn:L}
    L h(y) := 2 h(y) + y h'(y) + 2 C(g_\rho, h)(y).
  \end{equation}
\end{dfn}
Using the expression of $C(g_\rho,h)$ from eq.
\eqref{eq:dfn-bilinear-C} and noticing that $h * g_\rho =
-\frac{4}{\rho} H + g_\rho \int h + \frac{2}{\rho} H * g_\rho$ (an
integration by parts), one has
\begin{equation}
  \label{eq:simpler-C}
  2 C(g_\rho, h) = h * g_\rho - g_\rho \int h - 2 h
  \\
  =
  -2h -\frac{4}{\rho} H + \frac{2}{\rho} H * g_\rho,
\end{equation}
so \eqref{eq:dfn:L} may be rewritten as
\begin{equation}
  \label{eq:simpler-L}
  Lh =
  y \partial_y h  - \frac{4}{\rho} H + \frac{2}{\rho} H * g_\rho.
\end{equation}
Observe that this expression can be generalized to functions $h$ which
are not necessarily integrable at $y=0$, as it is $H$ which appears in
the convolution; it is for this reason that it will be useful later.
The following expression for the primitive of the operator $C$ will
also be needed below:
\begin{lem}
  \label{lem:primitive_C}
  For $g, h \in L^1$, and $y > 0$, it holds that
  \begin{equation*}
    \int_y^\infty C(g,h)(x) \,dx
    =
    \frac{1}{2}
    \int_0^y g(x) H(y-x) \,dx
    -
    \frac{1}{2}
    H(y) \int_0^\infty g(x)\,dx,
  \end{equation*}
  or, written more compactly,
  \begin{equation*}
    2 \int_y^\infty C(g,h)
    = g * H - H \int g,
  \end{equation*}
  where $H$ is given in eq. (\ref{eq:def-G-notation}).
\end{lem}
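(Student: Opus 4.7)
The plan is to start from the defining formula \eqref{eq:dfn-bilinear-C}, which (correcting an evident typographical duplication of the differential in its first line) can be rewritten compactly as
\begin{equation*}
  C(g,h)(y) = \tfrac{1}{2} (g * h)(y) - \tfrac{1}{2} g(y) \int h - \tfrac{1}{2} h(y) \int g,
\end{equation*}
where $(g * h)(y) := \int_0^y g(x) h(y-x)\,dx$ is the half-line convolution. I would then integrate this identity over $(y,\infty)$ in the size variable. The two linear pieces integrate immediately to $-\tfrac{1}{2} G(y) \int h$ and $-\tfrac{1}{2} H(y) \int g$, with $G, H$ the primitives from \eqref{eq:def-G-notation}, so all the work reduces to handling the convolution term.

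For that, I would apply Fubini (which is legitimate since $g,h \in L^1$) to write
\begin{equation*}
  \int_y^\infty (g*h)(x)\,dx
  = \int_0^\infty g(x') \int_{\max(y,x')}^\infty h(x - x')\,dx\,dx',
\end{equation*}
and then change variable $u = x - x'$ in the inner integral. A short case split on whether $x' < y$ or $x' \ge y$ collapses the inner integral to either $H(y-x')$ or $\int h$, yielding
\begin{equation*}
  \int_y^\infty (g*h)(x)\,dx = \int_0^y g(x')\,H(y-x')\,dx' + G(y) \int h.
\end{equation*}

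Substituting this back and collecting terms, the $\tfrac{1}{2} G(y) \int h$ contribution coming from the convolution term cancels exactly against the $-\tfrac{1}{2} G(y) \int h$ contribution from the linear piece, leaving the stated identity
\begin{equation*}
  2 \int_y^\infty C(g,h) = g * H - H \int g.
\end{equation*}
There is no real obstacle here; the only point requiring care is the case split in the Fubini step, which reflects the fact that once the source variable $x'$ exceeds the cutoff $y$, the inner tail integral sees the full mass $\int h$ rather than a truncated tail $H(y-x')$.
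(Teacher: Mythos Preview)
Your proof is correct and is precisely the ``direct integration of eq.~\eqref{eq:dfn-bilinear-C}'' that the paper invokes; the paper itself omits the argument and only refers to an external source, so there is nothing further to compare.
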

A proof of the above result can be obtained by direct integration of
eq. (\ref{eq:dfn-bilinear-C}), and we omit it here. For an explicit
proof, see for example that of Proposition 9 in
\cite{citeulike:1069613}. A direct consequence of this is the next
expression for the primitive of the operator $L$:


\begin{lem}
  \label{lem:primitive-L}
  For $h \in \mathcal{C}^1_0$ and $y > 0$, the primitive of $L h$ is
  given by
  \begin{equation}
    \label{eq:primitive-L}
    \mathcal{L} h(y)
    :=
    \int_y^\infty Lh(x) \,dx
    = -H(y) - y h(y) + g_\rho * H(y)
    ,
  \end{equation}
  where $H$, $G_\rho$ are the primitives of $h$, $g_\rho$ given in
  eqs.  (\ref{eq:def-G-notation}) and
  (\ref{eq:expression_G_rho-notation}), respectively.
\end{lem}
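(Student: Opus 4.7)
The plan is to compute the primitive of $Lh$ termwise, using the decomposition
$Lh = 2h + y\partial_y h + 2C(g_\rho, h)$ from Definition~\ref{dfn:L-prev},
and to invoke Lemma~\ref{lem:primitive_C} for the coagulation piece.
Since $h \in \mathcal{C}^1_0$, every boundary term at infinity vanishes trivially, which removes the only technical worry.

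First I would integrate the zeroth-order term: by definition of $H$,
\begin{equation*}
\int_y^\infty 2h(x)\,dx = 2H(y).
\end{equation*}
Next, for the transport term, integration by parts on $(y,\infty)$ gives
\begin{equation*}
\int_y^\infty x\, h'(x)\,dx = \bigl[x\, h(x)\bigr]_y^\infty - \int_y^\infty h(x)\,dx = -y h(y) - H(y),
\end{equation*}
where the upper boundary term vanishes because $h$ has compact support.

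For the quadratic term, Lemma~\ref{lem:primitive_C} applied with $g = g_\rho$ yields
\begin{equation*}
2\int_y^\infty C(g_\rho, h)(x)\,dx = g_\rho * H(y) - H(y) \int_0^\infty g_\rho(x)\,dx = g_\rho * H(y) - 2H(y),
\end{equation*}
using $\int g_\rho = 2$, as recalled after eq.~\eqref{eq:stationary-sols-lambda=0}. Adding the three contributions,
\begin{equation*}
\mathcal{L}h(y) = 2H(y) + \bigl(-yh(y) - H(y)\bigr) + \bigl(g_\rho * H(y) - 2H(y)\bigr) = -H(y) - y h(y) + g_\rho * H(y),
\end{equation*}
which is exactly \eqref{eq:primitive-L}.

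There is no real obstacle here: the only nontrivial input is Lemma~\ref{lem:primitive_C}, which is already available, and the integration by parts is legitimate thanks to compact support. If one wanted to weaken the assumption $h \in \mathcal{C}^1_0$ (as will be needed when applying the formula to the primitives $H$ without integrability at $0$), the alternative route is to start from the rewritten form \eqref{eq:simpler-L}, $Lh = y\partial_y h - \tfrac{4}{\rho} H + \tfrac{2}{\rho} H * g_\rho$, integrate each piece directly, and use $\int_y^\infty (H * g_\rho)'(x)\,dx$ together with the identity $H * g_\rho = \tfrac{2}{\rho}\bigl(g_\rho * \!\int H - \text{const}\bigr)$ to recover the same expression; but under the stated hypothesis the short computation above suffices.
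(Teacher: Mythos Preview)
Your proof is correct and follows exactly the same approach as the paper: decompose $Lh$ into its three pieces, integrate the transport term by parts, and apply Lemma~\ref{lem:primitive_C} together with $\int g_\rho = 2$ for the coagulation part. The paper's argument is slightly more compressed but identical in substance.
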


\begin{proof}
  Using Lemma \ref{lem:primitive_C} and integrating by parts,
  \begin{multline*}
    \mathcal{L} h (y)
    =
    2 H(y) + \int_y^\infty x\,h'(x) \,dx
    + 2 \int_y^\infty C(h,g_\rho)(x) \,dx
    \\
    =
    H(y) - y h(y) + g_\rho * H - H \int g_\rho
    =
    - H(y) - y h(y) + g_\rho * H,
  \end{multline*}
  as the integral of $g_\rho$ is equal to $2$.
\end{proof}

\begin{prp}[Spectral gap inequality]
  \label{prp:spectral-gap}
  For any $h \in \mathcal{C}^1_0$ such that
  \begin{equation*}
    \int_0^\infty H(y)^2\,
    e^{\frac{2}{\rho} y} \,dy < \infty
    \quad \text{ and } \quad
    \int_0^\infty y\,h(y) \,dy = 0,
  \end{equation*}
  it holds that
  \begin{equation}
    \label{eq:spectral-gap}
    \ap{ h, Lh }_{-1,\frac{2}{\rho}}
    \leq
    - \norm{h}_{-1,\frac{2}{\rho}}^2,
  \end{equation}
  where $\ap{ \cdot,  \cdot }_{-1,\frac{2}{\rho}}$ is the scalar product
  associated to the norm $\norm{\cdot}_{-1,2/\rho}$,
  \begin{equation*}
    \ap{ h, Lh }_{-1,\frac{2}{\rho}}
    :=
    \int_0^\infty H(y) \mathcal{L}h (y) e^{-\frac{2}{\rho} y} \,dy.
  \end{equation*}
\end{prp}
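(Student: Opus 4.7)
The plan is to substitute the expression for the primitive $\mathcal{L}h$ given by Lemma \ref{lem:primitive-L} into the weighted inner product and reduce the spectral gap estimate to the linearized Aizenman--Bak inequality (Lemma \ref{lem:Linear-Aizenman-Bak}) applied to the primitive $H$ rather than to $h$. Setting $\mu := 2/\rho$, substitution gives
\begin{equation*}
  \ap{h, Lh}_{-1,\mu}
  = -\norm{h}_{-1,\mu}^2
  - \int_0^\infty y\, h\, H\, e^{\mu y}\,dy
  + \int_0^\infty H\,(g_\rho * H)\, e^{\mu y}\,dy,
\end{equation*}
so the proposition reduces to showing that the sum of the last two terms is nonpositive.

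For the $yhH$ term I would integrate by parts using $(H^2)' = -2Hh$. Since $h \in \mathcal{C}^1_0$, the primitive $H$ is bounded and vanishes for $y$ beyond the support of $h$, so all boundary terms vanish and a routine computation yields
\begin{equation*}
  \int_0^\infty y\, h\, H\, e^{\mu y}\,dy
  = \frac{1}{2}\int_0^\infty H^2 e^{\mu y}\,dy
  + \frac{\mu}{2}\int_0^\infty y H^2 e^{\mu y}\,dy.
\end{equation*}
For the convolution term I would use the explicit form $g_\rho(y) = 2\mu e^{-\mu y}$ and Fubini's theorem to rewrite
\begin{equation*}
  \int_0^\infty H(y)\,(g_\rho * H)(y)\, e^{\mu y}\,dy
  = 2\mu \int_0^\infty H(y)\, \tilde H(y)\, e^{\mu y}\,dy,
\end{equation*}
where $\tilde H(y) := \int_y^\infty H(x)\,dx$. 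This is the key algebraic simplification: the convolution against the exponential kernel $g_\rho$ exactly produces a "primitive of $H$" structure, which is precisely the shape of the left-hand side of the linearized Aizenman--Bak inequality applied to $H$.

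Finally, I would apply Lemma \ref{lem:Linear-Aizenman-Bak} with $h$ replaced by $H$. By Fubini the mass condition $\int_0^\infty y h(y)\,dy = 0$ transfers exactly to $\int_0^\infty H(y)\,dy = 0$, and this is what is needed to kill the cross term $2\mu(\int H)(\int yH)$ appearing on the right-hand side of linearized Aizenman--Bak. One then obtains
\begin{equation*}
  4\int_0^\infty H \tilde H\, e^{\mu y}\,dy
  \leq \int_0^\infty yH^2 e^{\mu y}\,dy
  + \frac{1}{\mu}\int_0^\infty H^2 e^{\mu y}\,dy,
\end{equation*}
which, after multiplication by $\mu/2$, is exactly the bound needed so that the convolution term is dominated by the $yhH$ term computed above. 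Combining everything gives $\ap{h, Lh}_{-1,\mu} \leq -\norm{h}_{-1,\mu}^2$.

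The main clever step, rather than an obstacle, is recognizing that linearized Aizenman--Bak should be applied to the primitive $H$ and not to $h$ itself; the zero-mass condition $\int y h = 0$ then conveniently becomes $\int H = 0$, which is precisely what eliminates the unwanted bilinear term. A minor technical point is the justification of Fubini and of the vanishing of boundary terms, but this is immediate thanks to the $\mathcal{C}^1_0$ assumption on $h$; the inequality then extends to all $h$ satisfying the stated integrability assumption by a standard density argument.
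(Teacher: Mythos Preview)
Your proposal is correct and follows essentially the same route as the paper's own proof: substitute the expression for $\mathcal{L}h$ from Lemma~\ref{lem:primitive-L}, integrate the $yhH$ term by parts, rewrite the convolution term as $2\mu\int H\tilde H\,e^{\mu y}$ (the paper calls $\tilde H$ by $\mathcal{H}$), and then apply the linearized Aizenman--Bak inequality to $H$ using $\int H = 0$ to kill the cross term. The only difference is cosmetic: the paper combines the three pieces first and then applies the inequality, whereas you phrase the conclusion as ``the convolution term is dominated by the $yhH$ term''.
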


\begin{proof}
  Set $\mu := 2/\rho$ as a shorthand. We explicitly calculate this
  expression by using eq. \eqref{eq:primitive-L}:
  \begin{multline}
    \label{eq:gap-1}
    \ap{ h, Lh }_{-1,\mu}
    =
    \int_0^\infty H(y) \int_y^\infty Lh (x)\,dx\, e^{\mu y} \,dy
    \\
    =
    - \int H^2 e^{\mu y}
    - \int H h\,y e^{\mu y}
    + \int H (g_\rho * H) e^{\mu y}.
  \end{multline}
  We rewrite these terms using integration by parts:
  \begin{equation}
    \label{eq:gap-3}
    \int H h \,y \, e^{\mu y}
    =
    - \int H h \,y \, e^{\mu y}
    + \int H^2 e^{\mu y}
    + \mu \int H^2 \,y \,e^{\mu y},
  \end{equation}
  and hence
  \begin{equation}
    \label{eq:gap-4}
    \int H h \,y \, e^{\mu y}
    =
    \frac{1}{2} \int H^2 \, e^{\mu y}
    + \frac{\mu}{2} \int H^2 \,y \,e^{\mu y}.
  \end{equation}
  As for the third term in eq. \eqref{eq:gap-1},
  \begin{equation}
    \label{eq:gap-2}
    \int_0^\infty H (g_\rho * H) e^{\mu y}
    =
    \frac{4}{\rho}
    \int_0^\infty H(y) \int_0^y H(x) e^{\mu x} \,dx \,dy
    =
    \frac{4}{\rho}
    \int H \mathcal{H} e^{\mu y}
    ,
  \end{equation}
  where $\mathcal{H}$ is the primitive of $H$:
  \begin{equation*}
    \mathcal{H}(y) := \int_y^\infty H(x) \,dx.
  \end{equation*}
  Putting eqs. \eqref{eq:gap-4} and \eqref{eq:gap-2} into \eqref{eq:gap-1}
  one obtains
  \begin{equation*}
    \ap{ h, Lh }_{-1,\mu}
    =
    - \frac{3}{2} \int H^2 e^{\frac{2}{\rho} y}
    - \frac{1}{\rho} \int H^2 \,y \,e^{\frac{2}{\rho} y}
    + \frac{4}{\rho} \int \mathcal{H} H e^{\frac{2}{\rho} y}.
  \end{equation*}
  Using the linearized Aizenman-Bak inequality (Lemma
  \ref{lem:Linear-Aizenman-Bak}) with $H$ instead of $h$ and $\mu :=
  2/\rho$ gives
  \begin{multline*}
    \ap{ h, Lh }_{-1,\mu}
    \leq
    - \frac{3}{2} \int H^2 e^{\frac{2}{\rho} y}
    - \frac{1}{\rho} \int H^2 \,y \,e^{\frac{2}{\rho} y}
    \\
    + \frac{1}{\rho} \int H^2 \,y \,e^{\frac{2}{\rho} y} +
    \frac{1}{2} \int H^2 e^{\frac{2}{\rho} y}
    =
    - \int H^2 e^{\frac{2}{\rho} y} 
    =
    - \norm{h}_{-1,\mu}^2
    ,
  \end{multline*}
  where we have taken into account that $\int y\, h = 0$ when omitting
  one of the terms in the inequality.
\end{proof}

\section{Spectral gaps of the linearized operator}
\label{sec:spectral gaps}

\subsection{Functional spaces and definition of $L$}
\label{sec:functional spaces}

For $\mu > 0$ and integer $k \geq -1$, we consider the following norms
for a function $h:(0,\infty) \to \RR$, defined when the expression
below make sense:
\begin{gather}
  \label{eq:def:Hk-mu-norm}
  \norm{h}_{k,\mu}^2
  := \int_0^\infty y^{2(k+1)} (D^k h(y))^2 e^{\mu y} \,dy.
\end{gather}
Here, $D^k h$ denotes the $k$-th derivative of $h$; for $D^1 h$ we
often use the more common $h'$, and when $k = -1$ we set $D^{-1} h :=
-H$ (cf. eq.~\eqref{eq:dfn-H-intro}). We also consider the following
spaces:
\begin{align}
  \label{eq:X_-1,mu}
  &X_{-1,\mu} := \left\{
    H' \mid H \in L^2(e^{\mu y})
    \quad \text{and} \quad
    \int H = 0
  \right\}
  \\
  \label{eq:X_k,mu}
  &X_{k,\mu} := \left\{
    h \mid 
    y^{k+1} D^k h \in L^2(e^{\mu y})
    \quad \text{and} \quad
    \int y\,h = 0
  \right\},
\end{align}
where it is understood that the members of these spaces are at least
distributions on $(0, \infty)$, and the derivatives are meant also in
the sense of distributions. In each $X_{k,\mu}$ for integer $k \geq
-1$ and $\mu > 0$, the above defined $\norm{\cdot}_{k,\mu}$ gives a
Hilbert norm, and their associated scalar products are denoted by
$\ap{\cdot,\cdot}_{k,\mu}$. We will prove later that the linearized
coagulation operator has a spectral gap in these spaces, and
considering only functions $h$ with $\int y \,h = 0$ is tied to the
fact that the coagulation equation conserves the mass, so its
linearization cannot have a spectral gap in a space without a
restriction of this kind. Observe that the restriction $\int H = 0$ in
$X_{-1,\mu}$ is the same as $\int y\, H' = - \int y\, h = 0$ when $H$
is regular enough, so it is the same as in the other spaces.

For given $\mu > 0$, the spaces $X_{k,\mu}$ form a scale where each
space is dense in the next one, as we show in the next two lemmas.

\begin{lem}
  \label{lem:Hardy-L2}
  For any $\mu \geq 0$, any integer $n \geq 0$ and $h \in L^2(y^{2n+2}
  e^{\mu y})$ it holds that
  \begin{equation}
    \label{eq:Hardy-L2}
    \int_0^\infty H^2 y^{2n} e^{\mu y}\,dy
    \leq
    4 \int_0^\infty h^2 y^{2(n+1)} e^{\mu y}\,dy.
  \end{equation}
\end{lem}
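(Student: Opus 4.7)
The plan is to carry out a weighted Hardy-type argument via integration by parts, using $H'=-h$ and the identity
\begin{equation*}
  \frac{d}{dy}\bigl[y^{2n+1} e^{\mu y}\bigr]
  =
  (2n+1)\,y^{2n} e^{\mu y} + \mu\, y^{2n+1} e^{\mu y}.
\end{equation*}
First I would assume, by a routine density argument, that $h$ is smooth and compactly supported in $(0,\infty)$, so that all quantities below are finite and the boundary terms at $0$ and $\infty$ vanish (at $y=0$ this uses $y^{2n+1}\vert_{y=0}=0$ for $n\ge 0$; at $\infty$ one can bound $H^2 y^{2n+1}e^{\mu y}$ by the tail of the right-hand side of \eqref{eq:Hardy-L2} via Cauchy-Schwarz on $H(y)=\int_y^\infty h$). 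The final inequality is then obtained for general $h\in L^2(y^{2n+2}e^{\mu y})$ by approximation.

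Multiplying the identity above by $H^2$ and integrating gives
\begin{equation*}
  (2n+1)\!\int_0^\infty\! H^2 y^{2n} e^{\mu y}\,dy
  =
  \int_0^\infty\! H^2 \tfrac{d}{dy}[y^{2n+1}e^{\mu y}]\,dy
  - \mu\!\int_0^\infty\! H^2 y^{2n+1} e^{\mu y}\,dy.
\end{equation*}
An integration by parts on the first term, together with $H'=-h$ and vanishing boundary contributions, turns it into $2\int_0^\infty Hh\, y^{2n+1} e^{\mu y}\,dy$. Since $\mu\ge 0$, the $\mu$-term on the right may be dropped, leaving
\begin{equation*}
  (2n+1)\!\int_0^\infty\! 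H^2 y^{2n} e^{\mu y}\,dy
  \leq
  2\!\int_0^\infty\! |H|\,|h|\,y^{2n+1}e^{\mu y}\,dy.
\end{equation*}

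Now I would split the weight on the right as $y^{2n+1}e^{\mu y}=(y^n e^{\mu y/2})\cdot(y^{n+1}e^{\mu y/2})$ and apply Cauchy--Schwarz, obtaining
\begin{equation*}
  (2n+1)\!\int_0^\infty\! H^2 y^{2n} e^{\mu y}\,dy
  \leq
  2\!\left(\int_0^\infty\! H^2 y^{2n}e^{\mu y}\,dy\right)^{\!1/2}
  \!\left(\int_0^\infty\! h^2 y^{2n+2}e^{\mu y}\,dy\right)^{\!1/2}.
\end{equation*}
Dividing through by the square root of $\int H^2 y^{2n} e^{\mu y}$ (which is legitimate when this quantity is positive; otherwise the inequality is trivial) and squaring yields
\begin{equation*}
  (2n+1)^2 \!\int_0^\infty\! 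H^2 y^{2n} e^{\mu y}\,dy
  \leq
  4 \!\int_0^\infty\! h^2 y^{2n+2} e^{\mu y}\,dy,
\end{equation*}
which, since $(2n+1)^2\ge 1$ for $n\ge 0$, is stronger than the claimed inequality \eqref{eq:Hardy-L2}.

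The only genuinely delicate point is the justification of the boundary term $H^2 y^{2n+1}e^{\mu y}\to 0$ as $y\to\infty$ (and the integrability allowing the integration by parts). For smooth compactly supported $h$ this is trivial; for general $h\in L^2(y^{2n+2}e^{\mu y})$ one either approximates by such functions and passes to the limit by Fatou in the LHS and dominated convergence in the RHS, or one checks the decay directly from the Cauchy--Schwarz estimate $|H(y)|\le (\int_y^\infty h^2 x^{2n+2}e^{\mu x}\,dx)^{1/2}\,(\int_y^\infty x^{-2n-2}e^{-\mu x}\,dx)^{1/2}$. This is the main technical, though standard, obstacle; everything else is algebraic.
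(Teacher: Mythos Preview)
Your proof is correct. The approach, however, differs from the paper's. The paper writes $H(y)=\int_y^\infty h$, applies Fubini to turn $\int H^2 y^{2n}e^{\mu y}$ into $\int h(x)\int_0^x H(y)y^{2n}e^{\mu y}\,dy\,dx$, bounds the inner integral by $x^{n}e^{\mu x/2}\int_0^x |H|y^n e^{\mu y/2}$, and then applies Cauchy--Schwarz followed by the classical Hardy inequality $\norm{\tfrac{1}{x}\int_0^x f}_{L^2}\le 2\norm{f}_{L^2}$ as a black box. Your argument is instead a direct weighted Hardy computation: integrate by parts against $\tfrac{d}{dy}[y^{2n+1}e^{\mu y}]$, drop the nonnegative $\mu$-term, and apply Cauchy--Schwarz. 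This is essentially the standard proof of Hardy's inequality carried out in the weighted setting, so it is more self-contained and in fact yields the sharper constant $4/(2n+1)^2$ in place of $4$. The paper's route has the advantage of exhibiting the result as a consequence of the unweighted Hardy inequality, but yours is cleaner here.
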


\begin{proof}
  Notice that $h$ is integrable on $(\epsilon, \infty)$ for any
  $\epsilon > 0$, so $H$ is well-defined and $H(y) \to 0$ as $y \to
  +\infty$. We use Cauchy-Schwarz's and Hardy's inequality as follows:
  \begin{multline*}
    \int_0^\infty H^2 y^{2n} e^{\mu y} \,dy
    =
    \int_0^\infty \int_y^\infty H(y) h(x) y^{2n} e^{\mu y} \,dx \,dy
    \\
    =
    \int_0^\infty h(x) \int_0^x H(y) y^{2n} e^{\mu y} \,dy \,dx
    \\
    \leq
    \int_0^\infty \abs{h(x)} x^{n+1} \, e^{\frac{\mu}{2}x}
    \frac{1}{x} \int_0^x \abs{H(y)} y^n e^{\frac{\mu}{2} y} \,dy \,dx
    \\
    \leq
    \left(
      \int_0^\infty h^2 x^{2n+2} e^{\mu x} \,dx
    \right)^{\frac{1}{2}}
    \left(
      \int_0^\infty
      \left(
        \frac{1}{x}
        \int_0^x H y^n e^{\frac{\mu}{2} y} \,dy
      \right)^2
      \,dx
    \right)^{\frac{1}{2}}
    \\
    \leq
    2
    \left(
      \int_0^\infty h^2 x^{2n+2} e^{\mu x} \,dx
    \right)^{\frac{1}{2}}
    \left(
      \int_0^\infty
      H^2 y^{2n} e^{\mu y} \,dy
    \right)^{\frac{1}{2}}.
  \end{multline*}
  This proves the inequality in the lemma.
\end{proof}

\begin{lem}
  \label{lem:X_k_mu scale}
  For $\mu > 0$ and integer $k \geq 0$, $X_{k,\mu}$ is contained and
  dense in $X_{k-1,\mu}$.
\end{lem}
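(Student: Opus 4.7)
My plan is to prove the continuous embedding $X_{k,\mu} \subseteq X_{k-1,\mu}$ and the density separately, using Hardy's inequality (Lemma~\ref{lem:Hardy-L2}) for the embedding and a standard truncate/mollify/correct procedure for the density.

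For the embedding, I would fix $h \in X_{k,\mu}$ and set $\phi := D^k h$. The hypothesis $y^{k+1}\phi \in L^2(e^{\mu y})$ allows me to apply Lemma~\ref{lem:Hardy-L2} with $n = k$ directly to $\phi$, yielding
\[
  \int_0^\infty y^{2k} \Phi^2 e^{\mu y}\,dy \;\leq\; 4 \,\norm{h}_{k,\mu}^2,
\]
where $\Phi(y) := \int_y^\infty \phi(x)\,dx$ is the primitive of $D^k h$ that vanishes at infinity. To finish I need $-\Phi = D^{k-1}h$, which gives $\norm{h}_{k-1,\mu} \leq 2\,\norm{h}_{k,\mu}$. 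The moment constraint $\int y h = 0$ is identical in both spaces; in the boundary case $k=0$ I would in addition check via Fubini that $\int H = 0$ is equivalent to $\int y h = 0$, matching the definition of $X_{-1,\mu}$.

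The main obstacle is precisely the identification $-\Phi = D^{k-1}h$: two primitives of $D^k h$ differ by a constant $c$, and $-\Phi$ is only the unique one that decays at infinity. The key point is that any constant $c \neq 0$ would produce a contribution $c\,y^k \notin L^2(e^{\mu y})$ in $\norm{h}_{k-1,\mu}$, so under the finite-norm convention that implicitly defines $X_{k-1,\mu}$ the constant is forced to be zero; equivalently, one should interpret $X_{k,\mu}$ as the subset of $X_{-1,\mu}$ with the additional regularity, so that the iterated primitives of $D^k h$ are canonical. Once this interpretation is in place the embedding is immediate.

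For density, I would show that $\mathcal{D} := \{h \in \mathcal{C}_0^\infty(0,\infty) : \int y h = 0\}$ is dense in $X_{k-1,\mu}$; since $\mathcal{D} \subset X_{k,\mu}$, this gives density of $X_{k,\mu}$ in $X_{k-1,\mu}$. Given $h \in X_{k-1,\mu}$ I would construct the approximating sequence in three steps: (i) truncate $h$ by a smooth cutoff $\chi_n$ equal to $1$ on $[1/n, n]$ and supported in $[1/(2n), 2n]$, so that $\chi_n h \to h$ in $\norm{\cdot}_{k-1,\mu}$ by dominated convergence; (ii) mollify $\chi_n h$ by convolution with $\rho_\epsilon \in \mathcal{C}_0^\infty$ with $\epsilon$ smaller than the distance of the support of $\chi_n h$ to $0$, producing a smooth compactly supported approximant; (iii) subtract a vanishing multiple $c_{n,\epsilon}\, g$ of a fixed reference $g \in \mathcal{C}_0^\infty$ with $\int y g = 1$ to restore $\int y \cdot = 0$. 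Standard estimates at each step, together with a diagonal extraction in $(n,\epsilon)$, yield the required approximant in $\mathcal{D}$.
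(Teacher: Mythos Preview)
Your proposal is correct and follows essentially the same approach as the paper: the inclusion comes from the Hardy-type inequality of Lemma~\ref{lem:Hardy-L2}, and density from the fact that smooth compactly supported functions with vanishing first moment are dense in every $X_{k,\mu}$. The paper's proof is only two sentences and does not spell out either the primitive-identification issue you raise or the truncate/mollify/correct construction; you have simply filled in the details the paper omits. One minor remark: the paper takes its dense class to be $\{f \in \mathcal{C}^\infty_0([0,\infty)) : \int y f = 0\}$ rather than $\mathcal{C}^\infty_0(0,\infty)$, but your smaller class works equally well.
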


\begin{proof}
  Lemma \ref{lem:Hardy-L2} proves the inclusion. The claim that they
  are dense inclusions is a consequence of the fact that the set $\{ f
  \in \mathcal{C}^\infty_0([0,\infty)) \mid \int y f = 0 \}$ is dense
  in all of them.
\end{proof}

Let us also give other useful inequalities:

\begin{lem}
  \label{lem:weighted-Poincare}
  For $\mu > 0$ and any $h \in L^2(e^{\mu y})$,
  \begin{equation}
    \label{eq:weighted-Poincare}
    \int H^2 e^{\mu y}
    \leq
    \frac{4}{\mu^2}
    \int h^2 e^{\mu y}.
  \end{equation}
\end{lem}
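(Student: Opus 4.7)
The plan is to prove the inequality by integration by parts followed by Cauchy--Schwarz, in the same spirit as the one-dimensional Hardy inequality on a half-line with exponential weight. Since $H' = -h$, a formal integration by parts gives
\begin{equation*}
  \int_0^\infty H^2 \, e^{\mu y}\,dy
  = \left[\frac{H^2 e^{\mu y}}{\mu}\right]_0^\infty
  + \frac{2}{\mu} \int_0^\infty H\,h\, e^{\mu y}\,dy,
\end{equation*}
and Cauchy--Schwarz on the right-hand side, followed by dividing by $\left(\int H^2 e^{\mu y}\right)^{1/2}$ and squaring, yields the constant $4/\mu^2$ directly (the boundary term at $y=0$ is $-H(0)^2/\mu \leq 0$ and can simply be dropped).

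The only step that needs some care is justifying that the boundary term at $y = +\infty$ vanishes. For this I would first show by Cauchy--Schwarz that, for $h \in L^2(e^{\mu y})$,
\begin{equation*}
  H(y)^2
  = \Bigl(\int_y^\infty h(x) e^{\mu x/2}\, e^{-\mu x/2}\,dx\Bigr)^2
  \leq \frac{e^{-\mu y}}{\mu} \int_y^\infty h(x)^2 e^{\mu x}\,dx,
\end{equation*}
so that $H(y)^2 e^{\mu y} \leq \frac{1}{\mu}\int_y^\infty h^2 e^{\mu x}\,dx \to 0$ as $y \to +\infty$, and $H(0)^2 < \infty$. This a priori bound also shows that the left-hand side $\int H^2 e^{\mu y}$ is finite (up to a factor of $y$), which is enough to make the Cauchy--Schwarz step rigorous after a density argument (for instance, first proving the inequality for $h \in \mathcal{C}_0^\infty(0,\infty)$, where everything is smooth and compactly supported, and then passing to the limit).

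I do not expect a serious obstacle here; the proof is essentially a textbook weighted Hardy estimate. The only mild subtlety is the behavior of $H^2 e^{\mu y}$ at infinity, which is handled once and for all by the Cauchy--Schwarz bound above. The sharpness of the constant $4/\mu^2$ is consistent with the fact that equality in this chain is (asymptotically) saturated by $h(y) = e^{-\mu y/2}$ up to normalization, which can be used as a sanity check but is not needed for the statement.
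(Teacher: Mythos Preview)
Your proof is correct and essentially the same as the paper's: both rely on the integration-by-parts identity linking $\int H^2 e^{\mu y}$ to $\int hH e^{\mu y}$, followed by Cauchy--Schwarz. The only cosmetic difference is that the paper phrases the Cauchy--Schwarz step as the pointwise inequality $(2h - \mu H)^2 \geq 0$, which after integrating against $e^{\mu y}$ and using the same integration by parts yields the result directly (and sidesteps the need to first check that $\int H^2 e^{\mu y} < \infty$ before dividing).
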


\begin{proof}
  The inequality $(2h - \mu H)^2 \geq 0$ and an integration by parts
  gives
  \begin{multline*}
    4 \int h^2 e^{\mu y}
    \geq
    4 \mu \int h H e^{\mu y}
    - \mu^2 \int H^2 e^{\mu y}
    \\
    =
    2 \mu \left( \int h \right)^2
    +
    \mu^2 \int H^2 e^{\mu y}
    \geq
    \mu^2 \int H^2 e^{\mu y}.
  \end{multline*}
\end{proof}


A consequence of Lemma \eqref{eq:weighted-Poincare} applied to the
function $y^{k+1} D^k H$ in the place of $H$, and Lemma
\ref{lem:Hardy-L2}, is the following inequality:

\begin{lem}
  \label{lem:weighted-Poincare-X_k,mu}
  For integer $k \geq 0$ and $\mu > 0$, there is a number $C =
  C(k,\mu) > 0$ such that
  \begin{equation*}
    \norm{H}_{k,\mu} \leq C \norm{h}_{k,\mu}
  \end{equation*}
  for all $h \in X_{k,\mu}$.
\end{lem}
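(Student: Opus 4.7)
The plan is to follow the hint embedded in the statement: apply Lemma \ref{lem:weighted-Poincare} to the function $\varphi := y^{k+1}\,D^k H$ in the role of $H$, and then reduce the resulting terms back to $\norm{h}_{k,\mu}^2$ via Lemma \ref{lem:Hardy-L2}. Concretely, I would first observe that for $h \in X_{k,\mu}$ (or rather for $h$ in the dense subspace of smooth, compactly supported test functions, extending by density at the end), the function $\varphi = y^{k+1}D^k H$ decays fast enough at infinity and vanishes at $y=0$ so that the weighted Poincar\'e inequality may be applied to it in the form
\begin{equation*}
  \int_0^\infty \varphi^2\, e^{\mu y}\,dy
  \leq \frac{4}{\mu^2} \int_0^\infty (\varphi')^2\, e^{\mu y}\,dy.
\end{equation*}

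Next I would differentiate $\varphi$ explicitly, using $H' = -h$ and hence $D^{k+1}H = -D^k h$, to obtain
\begin{equation*}
  \varphi'(y) = (k+1)\, y^k D^k H(y) - y^{k+1} D^k h(y).
\end{equation*}
The elementary inequality $(a+b)^2 \leq 2a^2 + 2b^2$ then yields
\begin{equation*}
  \norm{H}_{k,\mu}^2
  = \int_0^\infty y^{2(k+1)} (D^k H)^2 e^{\mu y}\,dy
  \leq \frac{8}{\mu^2} \left[ (k+1)^2 \!\int_0^\infty y^{2k} (D^k H)^2 e^{\mu y}\,dy + \norm{h}_{k,\mu}^2 \right].
\end{equation*}

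It remains to control the first bracket. For this I would apply Lemma \ref{lem:Hardy-L2} to the function $D^k H$, viewed as the tail integral of $-D^{k+1}H = D^k h$, with the index $n=k$. Since $D^k H$ vanishes at infinity (by the assumed weighted integrability of $D^k h$), this gives
\begin{equation*}
  \int_0^\infty y^{2k} (D^k H)^2 e^{\mu y}\,dy
  \leq 4 \int_0^\infty y^{2(k+1)} (D^k h)^2 e^{\mu y}\,dy
  = 4\,\norm{h}_{k,\mu}^2.
\end{equation*}
Plugging this in produces $\norm{H}_{k,\mu}^2 \leq \frac{8(4(k+1)^2 + 1)}{\mu^2}\,\norm{h}_{k,\mu}^2$, which is the desired estimate with the explicit constant $C(k,\mu) = \mu^{-1}\sqrt{8(4(k+1)^2+1)}$.

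The only subtle point, and therefore the main obstacle, is justifying the integration-by-parts / vanishing-at-the-endpoints inputs that Lemmas \ref{lem:Hardy-L2} and \ref{lem:weighted-Poincare} implicitly require: namely that $\varphi = y^{k+1} D^k H$ and $D^k H$ genuinely tend to zero at both $0$ and $\infty$. The cleanest way to dispatch this is to first establish the inequality on the dense subspace of functions in $\mathcal{C}^\infty_0([0,\infty))$ with $\int y\,f = 0$ (where all these boundary behaviors are obvious), and then extend by continuity to all of $X_{k,\mu}$ using the density statement in Lemma \ref{lem:X_k_mu scale}; both sides of the inequality are continuous in $\norm{\cdot}_{k,\mu}$, so the extension is immediate.
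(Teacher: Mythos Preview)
Your proof is correct and follows essentially the same approach as the paper, which merely states that the lemma is a consequence of Lemma~\ref{lem:weighted-Poincare} applied to the function $y^{k+1} D^k H$ in the place of $H$, together with Lemma~\ref{lem:Hardy-L2}. You have simply made the details of that sketch explicit, including the density argument for the boundary behavior.
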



Another useful property of these spaces is that exchanging derivatives
and powers of $y$ gives other equivalent norms. More precisely:

\begin{lem}
  \label{lem:equivalence of norms}
  Take $\mu > 0$, and nonnegative integers $k,a,b,n,m$ such that $a+b
  = n+m = k+1$. Assume that $h \in \mathcal{C}^{k-1}$ is such that
  \begin{equation}
    \label{eq:condition-at-0}
    \lim_{y \to 0} y^{i+2} D^i h(y) = 0
    \qquad
    (i=0, \dots, k-1).
  \end{equation}
  Then there is a (constructive) constant $K = K(k,a,b,n,m) > 0$ such
  that
  \begin{equation*}
    \norm{ y^{a} D^k (y^b h)}_{L^2(e^{\mu y})}
    \leq
    K \norm{ y^{n} D^k (y^m h) }_{L^2(e^{\mu y})}
  \end{equation*}
  for any $h \in L^2(e^{\mu y})$, in the sense that the left hand side
  is finite whenever the right hand side is, and then the inequality
  holds.
\end{lem}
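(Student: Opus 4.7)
The plan is to establish the equivalence of all these norms by comparing each to the canonical choice $\norm{y^{k+1}D^k h}_{L^2(e^{\mu y})}$ (corresponding to $(a,b)=(k+1,0)$) and then invoking the resulting inequality in both directions. The starting point is the Leibniz formula
\[
y^a D^k(y^b h) \;=\; \sum_{j=0}^{\min(b,k)} \binom{k}{j}\frac{b!}{(b-j)!}\, y^{k+1-j} D^{k-j}h,
\]
which expresses each $y^a D^k(y^b h)$ as a linear combination of the ``pure'' terms $y^{q+1}D^q h$ for $0\le q\le k$, with the $j=0$ term always giving coefficient $1$ in front of $y^{k+1}D^k h$.

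For the easy (downward) direction, I iterate Lemma~\ref{lem:Hardy-L2}: whenever $\norm{y^{k+1}D^k h}_{L^2(e^{\mu y})}<\infty$, the exponential weight forces $D^{q-1}h(y)\to 0$ as $y\to\infty$ for every $1\le q\le k$, so applying Lemma~\ref{lem:Hardy-L2} at each level gives $\norm{y^{q+1}D^q h}\le 2^{k-q}\norm{y^{k+1}D^k h}$ for $0\le q\le k$. Combining this with the Leibniz expansion and the triangle inequality yields $\norm{y^a D^k(y^b h)}\le C_{k,a,b}\,\norm{y^{k+1}D^k h}$ for every admissible $(a,b)$.

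The harder (upward) direction I would handle by induction on $k$. The base case $k=0$ is trivial since all norms equal $\norm{y h}_{L^2(e^{\mu y})}$. For the inductive step, using the identity $D^k(y^{b+1}h)=y\,D^k(y^b h)+k\,D^{k-1}(y^b h)$ to pass between the adjacent arrangements $(a,b)$ and $(a-1,b+1)$, then squaring, multiplying by $y^{2(a-1)}e^{\mu y}$, and integrating by parts on the cross term (the boundary conditions $\lim_{y\to 0} y^{i+2}D^i h=0$ together with the exponential weight kill all boundary contributions), one obtains
\[
\norm{y^{a-1}D^k(y^{b+1}h)}^2 \;=\; \norm{y^a D^k(y^b h)}^2 + R_{a,b}[h],
\]
where $R_{a,b}[h]$ is an explicit linear combination of integrals of $(D^{k-1}(y^b h))^2$ against polynomial-exponential weights. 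By the induction hypothesis at level $k-1$ all such quantities are equivalent, and a weighted singular Hardy inequality --- obtained by splitting $(0,1]$ (standard Hardy, using $y^b h(y)\to 0$ at $0$) and $[1,\infty)$ (Lemma~\ref{lem:weighted-Poincare}) --- lets us bound $R_{a,b}[h]$ directly by $\norm{y^a D^k(y^b h)}^2$ with a constant depending only on $k,\mu,a,b$. Iterating the resulting equivalence between neighboring arrangements then connects any admissible $(a,b)$ to any $(n,m)$.

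The main obstacle is precisely this last step. A naive bound on $R_{a,b}[h]$ by $\norm{y^{k+1}D^k h}^2$, via the iterated Hardy inequality of Step~2, would produce only an estimate of the form $(1-C)\norm{y^{k+1}D^k h}^2 \le \norm{y^a D^k(y^b h)}^2$ with a constant $C$ that need not be less than $1$, so the induction would not close. Breaking this circularity requires controlling $R_{a,b}[h]$ by the target norm itself rather than by the canonical one, and it is exactly the boundary conditions assumed in the statement which make the necessary singular Hardy inequality available.
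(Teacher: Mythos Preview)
The paper itself omits the proof of this lemma, saying only that ``one can prove this by repeated application of weighted Hardy inequalities'' and citing a reference, so there is no detailed argument to compare against. Your proposal is in the right spirit and your ``easy'' direction (bounding every $\|y^a D^k(y^b h)\|$ by the canonical norm $\|y^{k+1}D^k h\|$ via Leibniz and iterated use of Lemma~\ref{lem:Hardy-L2}) is correct.

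The gap is in the hard direction. From the identity $\|y^{a-1}D^k(y^{b+1}h)\|^2=\|y^aD^k(y^bh)\|^2+R_{a,b}[h]$, a bound $|R_{a,b}[h]|\le C\,\|y^aD^k(y^bh)\|^2$ --- which is what you propose --- yields only
\[
\|y^{a-1}D^k(y^{b+1}h)\|^2\le (1+C)\,\|y^aD^k(y^bh)\|^2,
\]
i.e.\ one direction of the adjacency equivalence. For the reverse inequality $\|y^aD^k(y^bh)\|^2\le C'\,\|y^{a-1}D^k(y^{b+1}h)\|^2$ you would need $(1-C)\ge 0$, and nothing in your argument forces $C<1$; this is exactly the circularity you warn against in your last paragraph, just moved one step. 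What actually closes the loop is to bound $|R_{a,b}[h]|$ \emph{also} by the other adjacent norm $\|y^{a-1}D^k(y^{b+1}h)\|^2$. Writing $F:=D^{k-1}(y^bh)$ and $G:=y^kF$, one has $D^k(y^{b+1}h)=y^{1-k}G'$, so $\|y^{a-1}D^k(y^{b+1}h)\|^2=\int y^{2(a-k)}(G')^2e^{\mu y}$, while the terms in $R_{a,b}$ are of the form $\int y^{2(a-k)-1}G^2e^{\mu y}$ and $\int y^{2(a-k)-2}G^2e^{\mu y}$. These are controlled by $\int y^{2(a-k)}(G')^2e^{\mu y}$ via a weighted Hardy--Poincar\'e inequality of the type in Lemma~\ref{lem:weighted-Poincare} (using that $G$ vanishes at infinity, which follows from $h\in L^2(e^{\mu y})$ together with finiteness of the relevant norm). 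With both bounds on $R_{a,b}$ in hand you get the two-sided equivalence between neighbors, with no smallness constraint on the constants, and the iteration goes through.

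A smaller point: the boundary term at $y=0$ in your integration by parts is $y^{2a-1}(D^{k-1}(y^bh))^2$, which after expanding by Leibniz contains terms of size $y^{-1}(y^{i+2}D^ih)^2$. The hypothesis $y^{i+2}D^ih\to 0$ alone does not force these to vanish; you need to also invoke local square-integrability coming from the finiteness of the norms involved (or argue via approximation) to dispose of them.
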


One can prove this by repeated application of weighted Hardy
inequalities (cf. \cite[Example 0.3]{citeulike:2926130}), and we omit
the details.

\begin{rem}
  From this it is easy to see that the $L^2(e^{\mu y})$-norm of any
  combination of products by positive powers of $y$ and derivatives of
  $h$ gives an equivalent norm as long as the total sum of all the
  powers and that of the orders of all derivatives is fixed. It is
  important to restrict these considerations to functions $h$
  satisfying \eqref{eq:condition-at-0} and having some integrability
  property at $+\infty$, (e.g., $h \in L^2(e^{\mu y})$ in our case),
  as otherwise the finiteness of the right hand term does not imply
  that of the left one.
\end{rem}

The above equivalence of norms can be used in our $X_{k,\mu}$ spaces
for $\mu,k > 0$, as the condition in the previous lemma holds:

\begin{lem}
  If $h \in X_{k,\mu}$ with $\mu > 0$ and integer $k > 0$, then
  \begin{equation}
    \label{eq:limits at 0}
    \lim_{y \to 0} y^{i+2} D^i h(y) = 0
    \qquad
    (i=0, \dots, k-1).
  \end{equation}
\end{lem}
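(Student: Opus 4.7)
The plan is to prove the statement by downward induction on $i$, starting at $i = k-1$ and descending to $i = 0$. Since $h \in X_{k,\mu}$ means $D^k h \in L^2(y^{2(k+1)} e^{\mu y}\, dy)$, the distributional derivative $D^k h$ is locally square-integrable on $(0,\infty)$, and consequently each $D^i h$ for $0 \leq i \leq k-1$ admits a locally absolutely continuous representative on $(0,\infty)$. In particular the fundamental theorem of calculus may be applied freely on any compact subinterval of $(0,\infty)$.

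For the base case $i = k-1$ I would write, for $0 < y < y_0$,
\[
y^{k+1} D^{k-1} h(y) = y^{k+1} D^{k-1} h(y_0) - y^{k+1} \int_y^{y_0} D^k h(s) \, ds,
\]
and estimate the integral by Cauchy--Schwarz with weights $s^{\pm(k+1)}$:
\[
\left| \int_y^{y_0} D^k h(s) \, ds \right|
\leq \left( \int_y^{y_0} s^{-2(k+1)} \, ds \right)^{1/2}
\left( \int_y^{y_0} s^{2(k+1)} (D^k h(s))^2 \, ds \right)^{1/2}.
\]
Since $k \geq 1$, the first factor is bounded by $\frac{1}{\sqrt{2k+1}}\, y^{-(k+1/2)}$, while the second is at most $\|h\|_{k,\mu}$ (using $e^{\mu s} \geq 1$). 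Multiplying by $y^{k+1}$ yields an overall bound of order $y^{1/2}$, and the remaining term $y^{k+1} D^{k-1} h(y_0)$ is trivially $o(1)$.

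For the inductive step, assume $s^{i+3} D^{i+1} h(s) \to 0$ as $s \to 0$ with $i+1 \leq k-1$. Given $\varepsilon > 0$, pick $\delta > 0$ with $|D^{i+1} h(s)| \leq \varepsilon\, s^{-(i+3)}$ for $0 < s < \delta$. For $0 < y < \delta < y_0$ the fundamental theorem of calculus gives
\[
|D^i h(y)| \leq |D^i h(y_0)| + \int_y^\delta \varepsilon\, s^{-(i+3)} \, ds + \int_\delta^{y_0} |D^{i+1} h(s)| \, ds
\leq |D^i h(y_0)| + \frac{\varepsilon}{i+2}\, y^{-(i+2)} + C_\delta,
\]
where $C_\delta < \infty$ by continuity of $D^{i+1} h$ on $[\delta, y_0]$. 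Multiplying by $y^{i+2}$ and letting $y \to 0$ gives $\limsup_{y \to 0} y^{i+2} |D^i h(y)| \leq \varepsilon/(i+2)$, and since $\varepsilon$ was arbitrary the limit is zero.

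There is no serious obstacle; the only subtle point is to induct top-down, because at each level the vanishing bound one step above yields exactly the pointwise estimate $|D^{i+1} h(s)| \lesssim s^{-(i+3)}$ that allows a single integration to absorb one extra power of $s$ and reach the next level. Beyond this, the proof is a routine combination of Cauchy--Schwarz and antidifferentiation.
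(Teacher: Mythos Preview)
Your proof is correct, but the route differs from the paper's. The paper does not induct: instead it invokes the scale inclusions $X_{k,\mu}\subset X_{i+1,\mu}\subset X_{i,\mu}$ (Lemma~\ref{lem:X_k_mu scale}, which rests on the Hardy-type inequality of Lemma~\ref{lem:Hardy-L2}) and argues, for each $i$ separately, that the derivative of $y^{i+2}D^i h$, namely $(i+2)\,y^{i+1}D^i h + y^{i+2}D^{i+1}h$, is integrable near $0$ because each summand lies in a weighted $L^2$ space; hence $y^{i+2}D^i h$ has a limit at $0$, and this limit must vanish since otherwise $y^{i+1}D^i h\sim c/y$ would fail to be integrable. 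Your argument is more self-contained: it never appeals to the scale lemma or Hardy inequality, using only Cauchy--Schwarz for the top level and then propagating a pointwise bound by one antidifferentiation at each step. The trade-off is that the paper's version is a two-line argument once the scale machinery is in place, whereas yours stands alone but has to manage the $\varepsilon$--$\delta$ bookkeeping explicitly.
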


\begin{proof}
  Take an integer $0 \leq i < k$. As $h$ is in $X_{i+1,\mu} \subset
  X_{i,\mu}$, one directly sees that the first derivative of $y^{i+2}
  D^i h(y)$ is integrable near $y = 0$. Hence, $y^{i+2} D^i h(y)$ has
  a limit at $y=0$. As $y^{i+1} D^i h(y)$ is integrable, it must be
  that this limit is $0$.
\end{proof}

One can define the operator $L$ in the spaces $X_{k,\mu}$, of course
agreeing with our previous one in Definition \ref{dfn:L-prev} whenever
both are applicable. For the rest of Section \ref{sec:spectral gaps}
we fix $\rho > 0$, and consider the linearization of the nonlinear
operator in eq. (\ref{eq:ss-coag-lambda=0}) around the self-similar
profile $g_\rho$:

\begin{dfn}
  Take $\mu > 0$. For $h \in X_{0,\mu}$ (in particular, for $h \in
  X_{k,\mu}$ with $k \geq 0$) we define $Lh$ by eq.
  \eqref{eq:simpler-L}, where the derivative of $h$ is taken in the
  sense of distributions.
\end{dfn}

Our aim in this section is to show that the operator $L$ is defined as
$L: X_{k+1,\mu} \to X_{k,\mu}$ for $k \geq -1$, and that it is a
closed operator in the spaces $X_{k,\mu}$ for $\mu > 0$ and any
integer $k \geq -1$. Before proving this we will need two lemmas in
which we study the operator $C(g, h)$ which appears in the definition
of $L$.

\begin{lem}
  \label{lem:Dk(yk g*h))}
  For $\mu > 0$, integer $k \geq 1$ and $g,h \in X_{k,\mu}$, it holds
  that
  \begin{equation*}
    D^k (y^{k+1} (g*h))
    =
    \sum_{i=0}^{k+1} \binom{k+1}{i}
    (D^{k+1-i} (y^{k+1-i} g)) * (D^{i-1}(y^{i}h)).
  \end{equation*}
  In fact, this holds when $y^k D^{k+1} g, y^{k+1} D^k h \in
  L^2(e^{\mu y})$; this is, when $g,h$ satisfy the conditions for
  being in $X_{k,\mu}$, but without imposing that $\int y\,h = \int
  y\,g = 0$. In particular, this applies to $g = g_\rho$ and $h \in
  X_{k,\mu}$.
\end{lem}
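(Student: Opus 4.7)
The plan is to expand the factor $y^{k+1}$ inside the convolution using the identity $y = x + (y-x)$, and then redistribute the $k$ derivatives across the two factors by a Leibniz-type rule for convolutions.

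First I would rewrite
\begin{equation*}
  y^{k+1}(g*h)(y) = \int_0^y \bigl(x+(y-x)\bigr)^{k+1}\, g(x)\, h(y-x)\,dx,
\end{equation*}
expand by the binomial theorem, and recognise each resulting integral as a convolution, producing
\begin{equation*}
  y^{k+1}(g*h) = \sum_{i=0}^{k+1} \binom{k+1}{i}\,(y^{k+1-i}g)*(y^i h).
\end{equation*}
The claimed identity then reduces to showing, for each $i$, that
$D^k\bigl[(y^{k+1-i}g)*(y^ih)\bigr] = D^{k+1-i}(y^{k+1-i}g)*D^{i-1}(y^ih)$.

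Next I would apply $D^k$ term by term, using the Leibniz rule for convolutions $D^k(F*G) = D^aF * D^bG$ whenever $a+b=k$ and $F,G$ together with enough of their low-order derivatives vanish at $y=0$, so that no boundary terms appear when derivatives are shifted across the convolution. For $1 \le i \le k$ I take $a = k+1-i$ and $b = i-1$: the polynomial prefactors force $y^{k+1-i}g$ to vanish to order $k-i$ and $y^ih$ to order $i-1$ at the origin, which is precisely what is needed. The case $i = k+1$ is handled with the split $(a,b)=(0,k)$, exploiting that $y^{k+1}h$ vanishes to order $k$ at $0$.

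The delicate term is $i=0$, namely showing $D^k[(y^{k+1}g)*h] = D^{k+1}(y^{k+1}g)*D^{-1}h$. Since $y^{k+1}g$ and its first $k$ derivatives vanish at $0$, the first step gives $D^k(y^{k+1}g)*h$. One further integration by parts, based on the identity $h(y-x) = -\partial_x[D^{-1}h(y-x)]$, moves the last derivative onto the first factor; the boundary at $x=y$ drops because $D^k(y^{k+1}g)(0)=0$, while the boundary at $x=0$ is absorbed by the chosen normalisation of the primitive $D^{-1}h$. This $i=0$ boundary analysis is the main obstacle; everything else is bookkeeping with the binomial expansion.

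Finally I would justify these formal manipulations under the stated integrability hypotheses $y^kD^{k+1}g,\ y^{k+1}D^kh \in L^2(e^{\mu y})$ by a density argument: approximate $g$ and $h$ by smooth compactly supported functions satisfying uniform versions of the same weighted estimates, apply the identity in that regular setting, and pass to the limit using the $L^2$ bounds together with Hardy-type inequalities such as Lemma~\ref{lem:Hardy-L2} to control the convolutions involving the primitive $H$. For $g = g_\rho$ and $h \in X_{k,\mu}$ this approximation is immediate because $g_\rho$ is smooth with exponential decay, so the announced special case follows at once.
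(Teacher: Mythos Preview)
Your proposal is correct and follows exactly the paper's strategy: expand $y^{k+1}=(x+(y-x))^{k+1}$ inside the convolution via the binomial theorem, then distribute the $k$ derivatives across each resulting convolution, relying on the vanishing conditions \eqref{eq:limits at 0} to kill the boundary terms. The paper's own proof is a three-line sketch of precisely this; your version merely spells out the Leibniz shifting, singles out the $i=0$ term, and appends a density argument, all of which are elaborations of the same idea rather than a different route.
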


\begin{proof}
  One has
  \begin{equation*}
    y^{k+1} (g*h) (y) = \int_0^y y^{k+1} g(x) h(y-x) \,dx,
  \end{equation*}
  and the identity in the lemma can be obtained by writing the
  binomial expansion for $y^{k+1} = (x + (y-x))^{k+1}$ and
  differentiating the resulting convolutions. The border terms which
  appear when differentiating the integrals between $0$ and $y$ vanish
  due to \eqref{eq:limits at 0}.
\end{proof}

\begin{lem}
  \label{lem:linear C bounded}
  For $0 < \mu < 4/\rho$ and integer $k \geq -1$ there is a constant
  $K = K(\rho,k,\mu)$ such that
  \begin{equation*}
    \norm{C(h,g_\rho)}_{k,\mu} \leq K \norm{h}_{k,\mu}
    \quad \text{ for all } h \in X_{k,\mu}.
  \end{equation*}
\end{lem}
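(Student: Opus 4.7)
I would decompose $C(h, g_\rho)$ via the symmetric version of~\eqref{eq:simpler-C}, namely
\[
C(h, g_\rho) \;=\; -h \;-\; \tfrac{2}{\rho}\,H \;+\; \tfrac{1}{\rho}\,(H*g_\rho),
\]
and bound each piece in $\|\cdot\|_{k,\mu}$ by a multiple of $\|h\|_{k,\mu}$. The $-h$ piece is trivial. For the $H$ piece, iterating Lemma~\ref{lem:Hardy-L2} in its derivative-raising form (for $\psi$ vanishing at $+\infty$, $\int \psi^2 y^{2n}e^{\mu y} \le 4\int(\psi')^2 y^{2n+2}e^{\mu y}$) lifts the norm of $D^{k-1}h = D^k H$ up to that of $D^k h$ with the correct weight when $k\ge 0$; for $k=-1$, Lemma~\ref{lem:weighted-Poincare} applied to $H$ in place of $h$ gives $\|H\|_{-1,\mu}\le (2/\mu)\,\|h\|_{-1,\mu}$.

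The main work lies in the convolution $H*g_\rho$. For $k=-1$, the identity $g_\rho' = -\tfrac{2}{\rho}g_\rho$ yields $(H*g_\rho)' = \tfrac{4}{\rho} H - \tfrac{2}{\rho}(H*g_\rho)$; integrating on $[y,\infty)$ (noting that $(H*g_\rho)(y)\to 0$ as $y\to\infty$, which one checks directly from $H\in L^2(e^{\mu y})$) gives
\[
\int_y^\infty (H*g_\rho)(z)\,dz \;=\; \tfrac{\rho}{2}(H*g_\rho)(y) \;+\; 2\,\mathcal{H}(y).
\]
Thus $\|H*g_\rho\|_{-1,\mu}$ splits into $\|\mathcal{H}\|_{L^2(e^{\mu y})}$ (handled by a second use of Lemma~\ref{lem:weighted-Poincare}) and $\|H*g_\rho\|_{L^2(e^{\mu y})}$, which I would control by the weighted Young inequality $\|f*g\|_{L^2(e^{\mu y})} \le \|f\|_{L^2(e^{\mu y})}\,\|e^{\mu y/2} g\|_{L^1}$ applied with $f=H$ and $g=g_\rho$. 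The hypothesis $\mu<4/\rho$ enters precisely here: $e^{\mu y/2} g_\rho$ is integrable iff $\mu/2<2/\rho$.

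For $k\ge 0$, I would first use Lemma~\ref{lem:equivalence of norms} to replace the norm $\|y^{k+1}D^k(H*g_\rho)\|_{L^2(e^{\mu y})}$ by the equivalent $\|D^k(y^{k+1}(H*g_\rho))\|_{L^2(e^{\mu y})}$, then expand via Lemma~\ref{lem:Dk(yk g*h))} as
\[
D^k\bigl(y^{k+1}(H*g_\rho)\bigr) \;=\; \sum_{i=0}^{k+1}\binom{k+1}{i}\,D^{k+1-i}(y^{k+1-i}H)\,*\,D^{i-1}(y^i g_\rho).
\]
Each convolution is bounded in $L^2(e^{\mu y})$ by the same weighted Young estimate: the $L^1$ factor $\|e^{\mu y/2}D^{i-1}(y^i g_\rho)\|_{L^1}$ is finite because $D^{i-1}(y^i g_\rho)$ is a polynomial times $g_\rho$ and $\mu<4/\rho$, while the $L^2$ factor $\|D^{k+1-i}(y^{k+1-i}H)\|_{L^2(e^{\mu y})}$ is expanded by Leibniz into a sum of terms $y^m D^m H$ with $0\le m\le k+1-i$, each of which is controlled by $\|h\|_{k,\mu}$ by repeated Hardy as in the first paragraph.

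\textbf{Main obstacle.} The trickiest part is the bookkeeping in the $k\ge 0$ case: one must check that each Leibniz summand $y^m D^{m-1}h$ (or $H$ when $m=0$) lifts exactly to the top norm $\int y^{2(k+1)}(D^k h)^2 e^{\mu y}\,dy$ after the correct number of Hardy iterations, so that polynomial powers and derivative orders line up, and that the boundary terms implicit in the convolution manipulations of Lemma~\ref{lem:Dk(yk g*h))} all vanish thanks to the limits~\eqref{eq:limits at 0} guaranteed on $X_{k,\mu}$.
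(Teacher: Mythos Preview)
Your proposal is correct and, for $k\ge 0$, follows essentially the same route as the paper: decompose via \eqref{eq:simpler-C}, control the $H$ piece by Lemma~\ref{lem:weighted-Poincare-X_k,mu} (which is your iterated Hardy), and handle $H*g_\rho$ by passing through Lemma~\ref{lem:equivalence of norms}, expanding with Lemma~\ref{lem:Dk(yk g*h))}, and finishing each term with the weighted Young inequality $\|f*g\|_{L^2(e^{\mu y})}\le \|f\|_{L^2(e^{\mu y})}\,\|g\,e^{\mu y/2}\|_{L^1}$, where the condition $\mu<4/\rho$ ensures the $g_\rho$-factors are integrable.

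The only genuine difference is at $k=-1$. You stay with the decomposition \eqref{eq:simpler-C} and compute the primitive of $H*g_\rho$ through the ODE trick $(H*g_\rho)'=\tfrac{4}{\rho}H-\tfrac{2}{\rho}(H*g_\rho)$, which after integration leaves you with $\|H*g_\rho\|_{L^2(e^{\mu y})}$ (weighted Young) and $\|\mathcal{H}\|_{L^2(e^{\mu y})}$ (Lemma~\ref{lem:weighted-Poincare}). The paper instead goes straight to Lemma~\ref{lem:primitive_C}, which gives the primitive of $C(g_\rho,h)$ in closed form as $\tfrac12(g_\rho*H-2H)$, so one Young estimate on $g_\rho*H$ finishes immediately. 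Your detour is valid and instructive (it makes explicit how the special structure $g_\rho'=-\tfrac{2}{\rho}g_\rho$ is used), but Lemma~\ref{lem:primitive_C} is the shorter path here.
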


\begin{proof}
  Here, $C(h,g_\rho)$ is understood to be defined by
  eq. \eqref{eq:simpler-C}, which makes sense for $h \in X_{k,\mu}$
  (which implies $H \in L^2(e^{\mu y})$). Numbers $K_1, K_2,\dots$
  below are assumed to depend only on $\rho, k$ and $\mu$.

  Let us first prove the lemma for $k \geq 0$. From
  \begin{equation*}
    2 C(g_\rho, h)
    = -2h -\frac{4}{\rho} H + \frac{2}{\rho} H * g_\rho
  \end{equation*}
  and our previous lemma, we deduce that
  \begin{equation}
    \label{eq:lcb1}
    2 \norm{C(g_\rho,h)}_{k,\mu}
    \leq
    2 \norm{h}_{k,\mu}
    + \frac{4}{\rho} \norm{H}_{k,\mu}
    + \frac{2}{\rho} \norm{H * g_\rho}_{k,\mu}.
  \end{equation}
  The second term in eq. \eqref{eq:lcb1} is bounded by $K_1
  \norm{h}_{k,\mu}$ thanks to Lemma
  \ref{lem:weighted-Poincare-X_k,mu}, for some $K_1 > 0$. For the
  third term in \eqref{eq:lcb1} we have:
  \begin{multline*}
    \norm{H * g_\rho}_{k,\mu}
    \leq
    K_2
    \norm{ D^k \left( y^{k+1} (H * g_\rho) \right) }_{L^2(e^{\mu y})}
    \\
    \leq
    K_3
    \sum_{i=0}^{k+1} \binom{k+1}{i}
    \norm{h}_{i-1,\mu}
    \int \abs{D^{k-i}(y^{k-i}g_\rho)} y^{k-i} e^{\frac{\mu}{2}y}
    \\
    \leq
    K_4
    \norm{h}_{k,\mu}
  \end{multline*}
  for some constants $K_2, K_3, K_4 > 0$, where we have used Lemma
  \ref{lem:equivalence of norms} in the first inequality (observe that
  the conditions at $0$ are met), Lemma \ref{lem:Dk(yk g*h))} in the
  second, and Lemma \ref{lem:Hardy-L2} for the third one. As the
  integrals bounded in the last inequality are finite as long as $0
  \leq \mu < 4/\rho$, the lemma is proved for $k \geq 0$.

  Now, for $k = -1$, by density it is enough to prove the inequality
  when $h$ is $\mathcal{C}^\infty_0([0,\infty))$. We use the
  expression of the primitive of $C(g,h)$ from Lemma
  \ref{lem:primitive_C} to obtain, using Young's inequality, that
  \begin{multline}
    \label{eq:C(g_rho,h)_bounded_in_(-1,mu)}
    2 \norm{C(g_\rho,h)}_{-1,\mu}
    \leq
    \norm{g_\rho * H}_{L^2(e^{\mu y})} + \norm{h}_{-1,\mu} \int g_\rho
    \\
    \leq
    \norm{H}_{L^2(e^{\mu y})} \int \abs{g_\rho} e^{\frac{\mu}{2} y}
    + \norm{h}_{-1,\mu} \int g_\rho
    \leq
    2 \norm{h}_{-1,\mu} \int \abs{g_\rho} e^{\frac{\mu}{2} y}.
  \end{multline}
\end{proof}

We finally have the following:

\begin{prp}
  \label{prp:L closed unbounded}
  For any integer $k \geq -1$ and $0 < \mu < 4/\rho$, the operator $L$
  is defined between $X_{k+1,\mu}$ and $X_{k,\mu}$:
  \begin{equation*}
    L: X_{k+1,\mu} \to X_{k,\mu}.
  \end{equation*}
  Seen as an unbounded operator on $X_{k,\mu}$ for integer $k \geq -1$
  and real $\mu > 0$, $L$ is a closed operator with dense domain. In
  addition, for every $\mu > 0$ and integer $k \geq -1$ there is a
  constant $K = K(\rho,k,\mu)$ such that
  \begin{equation}
    \label{eq:L generates a semigroup}
    \ap{Lh,h}_{k,\mu}
    \leq
    K \norm{h}_{k,\mu}^2
    \qquad (h \in X_{k+1,\mu}),
  \end{equation}
  and consequently $L$ generates an evolution semigroup in each of the
  spaces $X_{k,\mu}$.
\end{prp}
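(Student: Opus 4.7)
The plan has three parts: (i) boundedness of $L$ from $X_{k+1,\mu}$ to $X_{k,\mu}$; (ii) closedness of $L$ as an unbounded operator on $X_{k,\mu}$ with dense domain; and (iii) the dissipation estimate \eqref{eq:L generates a semigroup}, from which semigroup generation follows through the Lumer--Phillips framework combined with bounded perturbation theory.

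For (i), I would work with the representation $Lh = y\,\partial_y h - \tfrac{4}{\rho}H + \tfrac{2}{\rho}(H * g_\rho)$ from \eqref{eq:simpler-L}. The convolution term is bounded in the $X_{k,\mu}$-norm by the calculation that already underlies Lemma \ref{lem:linear C bounded} (this is where the restriction $\mu < 4/\rho$ enters); the term $\tfrac{4}{\rho}H$ is dominated by $\norm{h}_{k,\mu}$ through Lemma \ref{lem:Hardy-L2}. For the transport piece, Leibniz gives $D^k(y\,\partial_y h) = y\,D^{k+1}h + k\,D^k h$, and Lemma \ref{lem:equivalence of norms}, combined with the vanishing property \eqref{eq:limits at 0} at $y=0$, yields a bound by $\norm{h}_{k+1,\mu}$. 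Preservation of the mass constraint $\int y\,h = 0$ under $L$ is checked by integration by parts in the transport term together with mass conservation of $C$, which follows directly from \eqref{eq:dfn-bilinear-C}.

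For (ii), density of $X_{k+1,\mu}$ in $X_{k,\mu}$ is exactly Lemma \ref{lem:X_k_mu scale}. For closedness, suppose $h_n \in X_{k+1,\mu}$, $h_n \to h$ and $Lh_n \to v$ in $X_{k,\mu}$. By step (i) the non-derivative parts $-\tfrac{4}{\rho}H_n + \tfrac{2}{\rho}H_n*g_\rho$ are continuous linear operators on $X_{k,\mu}$ and pass to the limit in norm; consequently $y\,\partial_y h_n$ converges in $X_{k,\mu}$, and also distributionally to $y\,\partial_y h$, which identifies $Lh = v$. The Leibniz identity together with the $X_{k,\mu}$-bound just obtained on $y\,\partial_y h$ then forces $y^{k+2}D^{k+1}h \in L^2(e^{\mu y})$, placing $h$ in $X_{k+1,\mu}$.

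For (iii), I split $L = L_1 + L_2$ with $L_1 h := y\,\partial_y h$ and $L_2 h := 2h + 2C(g_\rho,h)$. For $L_1$, the Leibniz expansion above combined with integration by parts in $\int y^{2k+3}\, D^{k+1}h\cdot D^k h\, e^{\mu y}\,dy = \tfrac{1}{2}\int y^{2k+3}\, \partial_y\bigl((D^k h)^2\bigr)\, e^{\mu y}\,dy$ produces
\begin{equation*}
\ap{L_1 h, h}_{k,\mu} = -\tfrac{3}{2}\norm{h}_{k,\mu}^2 - \tfrac{\mu}{2}\int_0^\infty y^{2k+3}(D^k h)^2\, e^{\mu y}\,dy \leq -\tfrac{3}{2}\norm{h}_{k,\mu}^2,
\end{equation*}
the boundary terms vanishing by virtue of \eqref{eq:limits at 0} and the exponential weight at infinity (the computation for $k=-1$ is analogous via the primitive formula of Lemma \ref{lem:primitive-L}). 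For $L_2$, Cauchy--Schwarz combined with Lemma \ref{lem:linear C bounded} gives $|\ap{L_2 h, h}_{k,\mu}| \leq K'\norm{h}_{k,\mu}^2$, and summing yields \eqref{eq:L generates a semigroup}. For semigroup generation I would observe that $L_1$ generates the explicit rescaling semigroup $S_t h(y) := h(e^t y)$; a change of variables shows $\norm{S_t h}_{k,\mu} \leq e^{-3t/2}\norm{h}_{k,\mu}$, so $(S_t)_{t\geq 0}$ is a $C_0$ contraction semigroup on $X_{k,\mu}$. Since $L_2$ is bounded on $X_{k,\mu}$ by Lemma \ref{lem:linear C bounded}, the bounded-perturbation theorem then delivers the $C_0$ semigroup generated by $L = L_1 + L_2$, with growth bound controlled by $K$.

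The main obstacle is the closedness step: one must extract $h \in X_{k+1,\mu}$ (the strictly smaller space) from a limit taken in the weaker $X_{k,\mu}$-norm, and this hinges on converting the $X_{k,\mu}$-bound on $y\,\partial_y h$ into a bound on $y^{k+2}D^{k+1}h$ via the Leibniz identity. A secondary delicate point is the strong continuity of the rescaling semigroup on $X_{k,\mu}$, which is handled by density of smooth compactly supported test functions.
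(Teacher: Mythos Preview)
Your proof is correct and follows essentially the same route as the paper: the same Leibniz/integration-by-parts computation for $\ap{y\,\partial_y h,h}_{k,\mu}$, the same use of Lemma~\ref{lem:linear C bounded} for the $C(g_\rho,h)$ part, and the same closedness argument via convergence of the bounded pieces forcing convergence of $y\,\partial_y h_n$. The one place you go further is semigroup generation: the paper simply asserts it from \eqref{eq:L generates a semigroup}, whereas you give an explicit argument (the rescaling group $S_th(y)=h(e^ty)$ as a contraction semigroup plus bounded perturbation by $L_2$), which is a clean and useful addition.
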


\begin{proof}
  To see that $L$ is defined between $X_{k+1,\mu}$ and $X_{k,\mu}$ for
  any $k \geq -1$, take $h \in X_{k+1,\mu}$ and notice that every term
  in \eqref{eq:dfn:L} is in $X_{k,\mu}$, as can be seen from Lemma
  \ref{lem:linear C bounded} and the fact that $h \in X_{k+1,\mu}$
  implies that $y\,h' \in X_{k,\mu}$ (see Lemma \ref{lem:equivalence
    of norms} and the remark that follows).

  For $k \geq -1$, let us prove that $L: X_{k+1,\mu} \to X_{k,\mu}$ is
  a closed operator in the space $X_{k,\mu}$. Take a sequence $h_n$ in
  $X_{k+1,\mu}$ which converges to some $h \in X_{k,\mu}$ in the norm
  of $X_{k,\mu}$, and such that $Lh_n \to \tilde{h}$ in $X_{k,\mu}$
  for some $\tilde{h} \in X_{k,\mu}$. We need to show that $\tilde{h}
  \in X_{k+1,\mu}$ and $L h = \tilde{h}$.  For such a sequence we have
  \begin{equation*}
    L h_n = 2 h_n + y h_n' + 2 C(g_\rho, h_n)
  \end{equation*}
  As the sequence $\{h_n\}$ converges to $h$, it is clear from Lemma
  \ref{lem:linear C bounded} that the first and last terms in the
  expression of $Lh_n$ converge to $2 h$ and $2 C(g_\rho, h)$,
  respectively, in $X_{k,\mu}$. As $Lh_n$ converges to $\tilde{h}$, it
  follows that the second term, $y h_n'$, converges to something in
  $X_{k,\mu}$. From the equivalence of norms in Lemma
  \ref{lem:equivalence of norms}, this implies that in fact $h_n$
  converges to something in the norm of $X_{k+1,\mu}$. As this is a
  stronger norm than $\norm{\cdot}_{k,\mu}$, we see that the limit
  must be $h$, which implies that $h \in X_{k+1,\mu}$ and $L h_n \to L
  h$ in $X_{k,\mu}$. Hence, $\tilde{h} = Lh$.

  Finally, let us prove the inequality \eqref{eq:L generates a
    semigroup} for $k \geq 0$ (for $k=-1$ a similar argument proves
  it). From expression \eqref{eq:dfn:L} we have
  \begin{multline}
    \label{eq:sg1}
    \ap{Lh,h}_{k,\mu}
    =
    2 \norm{h}_{k,\mu}^2
    + \ap{ y h', h}_{k,\mu}
    + 2 \ap{C(g_\rho, h), h}_{k,\mu}
    \\
    \leq
    2 \norm{h}_{k,\mu}^2
    + \ap{ y h', h}_{k,\mu}
    + 2 \norm{C(g_\rho, h)}_{k,\mu} \norm{h}_{k,\mu}
    .
  \end{multline}
  For the second term, as $D^k (y h') = k D^k h + y D^{k+1} h$,
  \begin{equation}
    \label{eq:sg2}
    \ap{ y h', h}_{k,\mu}
    =
    k \norm{h}_{k,\mu}^2
    +
    \int y^{2k+3} (D^{k+1}h) (D^k h) e^{\mu y}
    \leq
    k \norm{h}_{k,\mu}^2,
  \end{equation}
  as an integration by parts shows that the term we omitted is
  negative:
  \begin{multline}
    \label{eq:sg3}
    \int y^{2k+3} (D^{k+1}h) (D^k h) e^{\mu y}
    \\
    =
    - \frac{2k+3}{2}
    \int y^{2k+2} (D^{k}h)^2 e^{\mu y}
    - \frac{\mu}{2}
    \int y^{2k+3} (D^{k}h)^2 e^{\mu y}
    \\
    =
    - \frac{2k+3}{2} \norm{h}_{k,\mu}^2
    - \frac{\mu}{2}
    \int y^{2k+3} (D^{k}h)^2 e^{\mu y}
    .
  \end{multline}
  For the last term in \eqref{eq:sg1}, using Lemma
  \ref{lem:linear C bounded},
  \begin{equation}
    \label{eq:sg4}
    \norm{C(g_\rho, h)}_{k,\mu}
    \leq K \norm{h}_{k,\mu}
  \end{equation}
  Hence, we finally obtain
  \begin{equation*}
    \ap{Lh, h}_{k,\mu} \leq K_1 \norm{h}_{k,\mu}^2
  \end{equation*}
  for some $K_1$ depending on $\rho$ and $\mu$.
\end{proof}

\subsection{Spectral gap in $X_{-1,2/\rho}$}
\label{sec:gap-H-1}

A direct consequence of Proposition \ref{prp:spectral-gap} and a limit
argument is the exponential decay of the evolution semigroup defined
by $L$:

\begin{lem}
  \label{lem:spectral gap X_-1,2/rho}
  For $h \in X_{0,2/\rho}$,
  \begin{equation*}
    \ap{ h, Lh }_{-1,2/\rho} \leq - \norm{h}_{-1,2/\rho}^2.
  \end{equation*}
  As a consequence, for $h^0 \in X_{-1,2/\rho}$ we have
  \begin{equation}
    \label{eq:semigroup-decay-H-1}
    \norm{e^{t L} h^0}_{-1,2/\rho}
    \leq
    \norm{h^0}_{-1,2/\rho} e^{-t}
    \qquad (t \geq 0).
  \end{equation}
\end{lem}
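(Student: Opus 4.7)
The plan is to first promote Proposition \ref{prp:spectral-gap}, which gives the spectral gap bound only for smooth compactly supported test functions, to an inequality on all of $X_{0,2/\rho}$ by a density argument, and then to deduce the semigroup estimate \eqref{eq:semigroup-decay-H-1} by differentiating $t \mapsto \norm{e^{tL} h^0}_{-1,2/\rho}^2$ along the trajectory and invoking Gronwall.

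For the inequality on $X_{0,2/\rho}$, I would fix $h \in X_{0,2/\rho}$ and pick a sequence $h_n \in \mathcal{C}^\infty_0(0,\infty)$ with $\int y\, h_n = 0$ converging to $h$ in $\norm{\cdot}_{0,2/\rho}$, whose existence is asserted in the proof of Lemma \ref{lem:X_k_mu scale}. Each $h_n$ satisfies the three hypotheses of Proposition \ref{prp:spectral-gap} (compactness of support automatically providing $\int H_n^2 e^{(2/\rho) y} < \infty$), hence
\begin{equation*}
\ap{h_n, Lh_n}_{-1,2/\rho} \leq -\norm{h_n}_{-1,2/\rho}^2.
\end{equation*}
Lemma \ref{lem:Hardy-L2} upgrades $\norm{\cdot}_{0,2/\rho}$-convergence to $\norm{\cdot}_{-1,2/\rho}$-convergence, and Proposition \ref{prp:L closed unbounded} shows that $L:X_{0,2/\rho}\to X_{-1,2/\rho}$ is bounded, so $Lh_n \to Lh$ in $X_{-1,2/\rho}$. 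Cauchy--Schwarz in the $X_{-1,2/\rho}$ inner product then propagates the inequality to $h$.

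For the decay, Proposition \ref{prp:L closed unbounded} furnishes a strongly continuous semigroup $e^{tL}$ on each $X_{k,2/\rho}$, and these must agree on common subspaces. For $h^0 \in X_{1,2/\rho}$ the trajectory $h(t) := e^{tL} h^0$ stays in $X_{1,2/\rho} \subset X_{0,2/\rho}$ and is differentiable with $\partial_t h(t) = Lh(t)$, so combining with the inequality just established gives
\begin{equation*}
\frac{d}{dt} \norm{h(t)}_{-1,2/\rho}^2 = 2 \ap{h(t), Lh(t)}_{-1,2/\rho} \leq -2 \norm{h(t)}_{-1,2/\rho}^2,
\end{equation*}
and Gronwall yields the claimed exponential decay for $h^0 \in X_{1,2/\rho}$. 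Since $X_{1,2/\rho}$ is dense in $X_{-1,2/\rho}$ (by iterating Lemma \ref{lem:X_k_mu scale}) and $e^{tL}$ acts continuously on $X_{-1,2/\rho}$, the estimate extends by density to all $h^0 \in X_{-1,2/\rho}$. The one point that needs care is that the approximants in the density step simultaneously satisfy $\int y\, h_n = 0$ and converge in the stronger $\norm{\cdot}_{0,2/\rho}$-topology rather than only in $\norm{\cdot}_{-1,2/\rho}$; this is implicit in Lemma \ref{lem:X_k_mu scale} but is the only place real work is hidden.
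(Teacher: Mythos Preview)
Your proposal is correct and matches the paper's approach: the paper gives no proof beyond the sentence ``A direct consequence of Proposition \ref{prp:spectral-gap} and a limit argument,'' and you have simply spelled out that limit argument together with the standard Gronwall step. One minor remark: it suffices to take $h^0 \in X_{0,2/\rho}$ (the domain of $L$ on $X_{-1,2/\rho}$) rather than $X_{1,2/\rho}$ for the differentiability of $t \mapsto \norm{h(t)}_{-1,2/\rho}^2$, since semigroup theory already keeps trajectories starting in the domain inside the domain.
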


\subsection{Extension of spectral gaps to lower exponential weights}
\label{sec:spectral gap extension}

The evolution semigroup generated by $L$ in the spaces $X_{k,\mu}$ has
the remarkable property of creating exponential moments of $(D^k h)^2$
in finite time. This will allow us to prove that, if $L$ has a
spectral gap in $X_{k,\nu}$ for some $k \geq -1$, then it also has a
spectral gap in $X_{k,\mu}$ for any $0 < \mu \leq \nu$.

\begin{lem}
  \label{lem:creation_exponential_moments}
  Take $0 < \mu < \nu < 4/\rho$ and an integer $k \geq -1$. If $h$ is
  a solution of the linear self-similar equation $\partial_t h = Lh$
  with $h(0) \in X_{k,\mu}$, then the norm $\norm{h(t)}_{k,\nu}$ is
  finite at time $t = t_0 := \log (\nu / \mu)$, and
  \begin{equation*}
    \norm{h(t_0)}_{k,\nu}
    \leq
    \left( \frac{\nu}{\mu} \right)^{K}
    \norm{h(0)}_{k,\mu}
  \end{equation*}
  for some positive constant $K = K(\rho, k, \mu, \nu)$.
\end{lem}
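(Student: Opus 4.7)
The strategy is to introduce a time-dependent weight $\mu(t) := \mu e^t$, so that $\mu(0) = \mu$ and $\mu(t_0) = \nu$ precisely at $t_0 = \log(\nu/\mu)$, and track the quantity $B(t) := \norm{h(t)}_{k,\mu(t)}^2$. The heuristic reason this should work is that the transport part $y\,\partial_y h$ of $L$ has characteristics $y \mapsto y e^t$; along such a flow, the weight $e^{\mu y}$ at time $0$ becomes $e^{\nu y}$ at time $t_0$, so an exponential moment is carried with the flow at no cost. The bilinear part $2 C(g_\rho, h)$ is bounded in $X_{k,\mu(t)}$ for $\mu(t) < 4/\rho$ by Lemma \ref{lem:linear C bounded}, so it can only contribute a Gronwall factor.

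Concretely, I would first refine the energy inequality \eqref{eq:L generates a semigroup} by keeping the negative term that was discarded in \eqref{eq:sg3}: for $k \geq 0$ one arrives at
\begin{equation*}
  2 \ap{Lh,h}_{k,\mu}
  = \norm{h}_{k,\mu}^2
  - \mu \int_0^\infty y^{2k+3} (D^k h)^2 e^{\mu y}\,dy
  + 4 \ap{C(g_\rho,h),h}_{k,\mu}.
\end{equation*}
Formally differentiating $B$ and using this identity,
\begin{equation*}
  \frac{d}{dt} B(t)
  = 2 \ap{Lh(t), h(t)}_{k,\mu(t)}
  + \mu'(t) \int_0^\infty y^{2k+3} (D^k h(t))^2 e^{\mu(t) y}\,dy,
\end{equation*}
and since $\mu'(t) = \mu(t)$ for our choice of weight, the two integrals with weight $y^{2k+3}$ cancel exactly, leaving
\begin{equation*}
  \frac{d}{dt} B(t) = B(t) + 4\ap{C(g_\rho, h(t)), h(t)}_{k,\mu(t)}.
\end{equation*}
By Cauchy--Schwarz and Lemma \ref{lem:linear C bounded}, the last term is at most $C\,B(t)$ for some $C = C(\rho, k, \nu)$ independent of $t \in [0,t_0]$ (using $\mu(t) \le \nu < 4/\rho$), and Gronwall gives $B(t_0) \leq e^{(1+4C) t_0} B(0) = (\nu/\mu)^{1+4C} B(0)$, which is the desired bound. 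For $k = -1$, the same strategy applies using Lemma \ref{lem:primitive-L}: an integration by parts on $\int H h\, y e^{\mu y}\,dy$ produces the analogous dissipative term $-\tfrac{\mu}{2}\int H^2 y e^{\mu y}\,dy$, and the convolution $\int H(g_\rho * H) e^{\mu y}\,dy$ is controlled by $\norm{h}_{-1,\mu}^2$ through Young's inequality, exactly as in \eqref{eq:C(g_rho,h)_bounded_in_(-1,mu)}.

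The main technical hurdle is that a priori $B(t)$ need not be finite for $t > 0$, so the formal differentiation above must be justified. I would handle this by truncation: replace $e^{\mu(t) y}$ by a smoothed weight $e^{\mu(t) y} \chi_R(y)$, where $\chi_R$ equals $1$ on $[0,R]$, vanishes on $[2R,\infty)$, and is nonincreasing. The corresponding truncated quantity $B_R(t)$ is finite because $h(t) \in X_{k,\mu}$, and the extra boundary terms produced by the integrations by parts all have favorable sign since $\chi_R' \leq 0$. This yields a Gronwall bound on $B_R(t_0)$ that is uniform in $R$, and monotone convergence as $R \to \infty$ delivers the claim. An alternative route is to approximate $h(0)$ by $\mathcal{C}_0^\infty$ data, for which the semigroup solution retains rapid decay at infinity so that all manipulations are legitimate, and then pass to the limit in the initial data using the continuity of $e^{tL}$ on $X_{k,\mu}$ from Proposition \ref{prp:L closed unbounded}.
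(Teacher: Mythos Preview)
Your proposal is correct and follows essentially the same route as the paper: the paper also sets $\phi(t)=\mu e^{t}$, uses the refined energy estimate $\ap{h,Lh}_{k,\gamma}\le K\norm{h}_{k,\gamma}^2-\tfrac{\gamma}{2}\int y^{2k+3}(D^k h)^2 e^{\gamma y}$ (resp.\ the analogous $k=-1$ version with $\int yH^2 e^{\gamma y}$) so that the $\phi'(t)$ term from differentiating the weight cancels the dissipative integral, and then concludes by Gronwall. Your version differs only cosmetically in that you write the exact identity for $2\ap{Lh,h}_{k,\mu}$ rather than the paper's inequality, and you spell out the truncation/density justification that the paper leaves implicit.
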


\begin{proof}
  Let us first prove the lemma for $k \geq 0$. From eqs.
  \eqref{eq:sg1}--\eqref{eq:sg4} in the proof of Proposition
  \ref{prp:L closed unbounded}, for any $0 < \gamma < 4/\rho$ we have
  \begin{equation}
    \label{eq:p-exp-moments}
    \ap{h,Lh}_{k,\gamma}
    \leq
    K \norm{h}_{k,\gamma}^2
    - \frac{\gamma}{2}
    \int y^{2k+3} (D^{k}h)^2 e^{\gamma y},
  \end{equation}
  where the constant $K = K(\rho,k,\gamma) > 0$ is the same as that of
  Proposition \ref{prp:L closed unbounded}. With this, take $\phi(t)
  := \mu e^{t}$ and carry out the following computation:
  \begin{multline}
    \label{eq:p-exp-moments-2}
    \frac{d}{dt} \norm{h}_{k,\phi(t)}^2
    =
    2 \ap{h, Lh}_{k,\phi(t)}
    + \phi'(t) \int y^{2k+3} (D^{k}h)^2 e^{\mu y}
    \\
    \leq
    2 K(\phi(t)) \norm{h}_{\phi(t)}^2
    + (\phi'(t) - \phi(t)) \int y^{2k+3} (D^{k}h)^2 e^{\mu y}
    \\
    =
    2 K(\phi(t)) \norm{h}_{\phi(t)}^2,
  \end{multline}
  where $K(\phi(t))$ is the constant obtained from
  \eqref{eq:p-exp-moments}, for which we write the dependence on
  $\phi(t)$ explicitly. Now, for $t \leq t_0$,
  \begin{equation}
    \frac{d}{dt}
    \norm{h}_{\phi(t)}^2
    \leq
    2 K(\phi(t)) \norm{h}_{\phi(t)}^2
    \leq
    K_1 \norm{h}_{\phi(t)}^2
  \end{equation}
  for some $K_1 > 0$, as the constant $K(\phi(t))$ is bounded for $0
  \leq t \leq t_0$, which can be checked from the proof of Proposition
  \ref{prp:L closed unbounded}. Hence, as $\phi(0) = \mu$,
  \begin{equation*}
    \norm{h}_{\phi(t_0)}^2
    \leq
    e^{K_1 t_0} \norm{h^0}_{\mu}^2
    =
    \left(\frac{\nu}{\mu}\right)^{K_1}  \norm{h^0}_{\mu}^2,
  \end{equation*}
  which proves the lemma for $k \geq 0$, given that $\phi(t_0) = \mu
  e^{t_0} = \nu$.

  For $k = -1$ the same argument can be carried out by using the
  inequality
  \begin{equation*}
    \ap{h,Lh}_{-1,\gamma}
    \leq
    K \norm{h}_{-1,\gamma}^2
    - \frac{\gamma}{2}
    \int y\,H^2 e^{\gamma y}
  \end{equation*}
  instead of \eqref{eq:p-exp-moments}, and the following instead of
  \eqref{eq:p-exp-moments-2}:
  \begin{equation*}
    \frac{d}{dt} \norm{h}_{-1, \phi(t)}^2
    =
    2 \ap{h, Lh}_{-1,\phi(t)}
    + \phi'(t) \int y\, e^{\phi(t) y} H^2.
  \end{equation*}
\end{proof}

\begin{lem}
  \label{lem:spectral gap extension}
  Take $0 < \nu < 4/\rho$ and an integer $k \geq -1$. Assume that the
  operator $L$ has a spectral gap in $X_{k,\nu}$ of size $\delta >
  0$; this is, there exists $C \geq 1$ such that
  \begin{equation*}
    \norm{e^{tL} h^0}_{k,\nu}
    \leq C \norm{h^0}_{k,\nu} e^{-\delta t}.
  \end{equation*}
  Then, $L$ has a spectral gap of the same size in $X_{k,\mu}$ for any
  $0 < \mu < \nu$; this is, there is $C' = C'(\mu, C) \geq 1$ such
  that
  \begin{equation*}
    \norm{e^{tL} h^0}_{k,\mu}
    \leq C' \norm{h^0}_{k,\mu} e^{-\delta t}.
  \end{equation*}
\end{lem}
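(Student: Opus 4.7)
The plan is to combine three ingredients already established: (i) the spectral gap of $L$ in $X_{k,\nu}$ with rate $\delta$; (ii) the smoothing/regularization result of Lemma \ref{lem:creation_exponential_moments} that turns an initial datum in $X_{k,\mu}$ into an element of $X_{k,\nu}$ after time $t_0 := \log(\nu/\mu)$; and (iii) the short-time growth bound for $e^{tL}$ in $X_{k,\mu}$ coming from \eqref{eq:L generates a semigroup}, which gives $\|e^{tL} h^0\|_{k,\mu} \leq e^{Kt} \|h^0\|_{k,\mu}$. The obvious continuous embedding $X_{k,\nu} \hookrightarrow X_{k,\mu}$ (just $e^{\mu y} \leq e^{\nu y}$ under our integrals) will glue these together.

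Concretely, I would split the time axis at $t_0$. For $t \geq t_0$, write $e^{tL} h^0 = e^{(t-t_0)L}\bigl(e^{t_0 L} h^0\bigr)$. Lemma \ref{lem:creation_exponential_moments} bounds $\|e^{t_0 L} h^0\|_{k,\nu} \leq (\nu/\mu)^K \|h^0\|_{k,\mu}$; then applying the hypothesized spectral gap in $X_{k,\nu}$ to the function $e^{t_0 L} h^0$ yields
\begin{equation*}
  \|e^{tL} h^0\|_{k,\nu} \;\leq\; C\, (\nu/\mu)^K \|h^0\|_{k,\mu}\, e^{-\delta(t-t_0)}
  \;=\; C\, (\nu/\mu)^K e^{\delta t_0}\, \|h^0\|_{k,\mu}\, e^{-\delta t}.
\end{equation*}
The embedding $X_{k,\nu} \hookrightarrow X_{k,\mu}$ then transfers this to the $X_{k,\mu}$-norm on the left.

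For $0 \leq t \leq t_0$, I simply use the semigroup bound: $\|e^{tL} h^0\|_{k,\mu} \leq e^{K t_0} \|h^0\|_{k,\mu}$, and multiply and divide by $e^{-\delta t}$ (which is bounded above by $1$ and below by $e^{-\delta t_0}$ on this interval) to rewrite it as $C'' \|h^0\|_{k,\mu}\, e^{-\delta t}$ for a suitable constant $C''$ depending on $\mu,\nu,\rho,k,\delta$. Taking the maximum of the two constants produced in the two regimes gives the desired $C' = C'(\mu, C)$.

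I do not expect any real obstacle: the only slightly delicate point is checking that the constant in Lemma \ref{lem:creation_exponential_moments} and the semigroup-growth constant from \eqref{eq:L generates a semigroup} both remain finite on the compact interval $[0,t_0]$, but this is precisely what was shown in the proof of that lemma. Everything else is a bookkeeping of constants and a one-line use of $e^{\mu y}\leq e^{\nu y}$ to pass from the $\nu$-weighted norm down to the $\mu$-weighted norm.
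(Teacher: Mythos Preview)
Your proposal is correct and follows essentially the same approach as the paper: split time at $t_0 = \log(\nu/\mu)$, use the semigroup growth bound from \eqref{eq:L generates a semigroup} on $[0,t_0]$, invoke Lemma~\ref{lem:creation_exponential_moments} at $t_0$ to pass to $X_{k,\nu}$, apply the assumed spectral gap in $X_{k,\nu}$ for $t\geq t_0$, and return to $X_{k,\mu}$ via the embedding $e^{\mu y}\leq e^{\nu y}$. Your bookkeeping of constants and the observation that the growth constant stays bounded on $[0,t_0]$ match the paper's argument.
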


\begin{proof}
  Take $h^0$ in the domain of $L$ as an operator on $X_{k,\mu}$ (i.e.,
  $X_{k+1,\mu}$ if $k \geq 0$, and $L^2(y\,e^{\mu y})$ if $k=-1$).
  Consider the solution $t \mapsto h(t)$ to eq.
  (\ref{eq:ss-coag-lambda=0}) with initial condition $h^0$. The idea
  is to estimate $\norm{h(t)}_{k,\mu}$ using eq.  \eqref{eq:L
    generates a semigroup} until a time $t_0$ for which
  $\norm{h(t_0)}_{-1,\nu}$ is finite, and after that time use the fact
  that the latter norm is exponentially decreasing thanks to the
  spectral gap we are assuming in the norm of $X_{k,\nu}$.

  Choose $t_0 := \log \frac{\nu}{\mu}$. Then, for $t \leq t_0$, the
  estimate in eq. \eqref{eq:L generates a semigroup} gives
  \begin{equation*}
    \frac{d}{dt} \norm{h(t)}_{k,\mu}^2
    =
    2 \ap{h(t), Lh(t)}_{k,\mu}
    \leq
    K_{1}
    \norm{h(t)}_{k,\mu}^2.
  \end{equation*}
  So, for $0 \leq t \leq t_0$ we have
  \begin{equation}
    \label{eq:estimate_before_t0}
    \norm{h(t)}_{k,\mu}^2
    \leq
    \norm{h^0}_{k,\mu}^2 e^{K_1 t}
    \leq
    \norm{h^0}_{k,\mu}^2 e^{K_1 t_0}
    =
    \norm{h^0}_{k,\mu}^2 \left( \frac{\nu}{\mu} \right)^{K_1}
  \end{equation}
  At $t=t_0$ we can use Lemma \ref{lem:creation_exponential_moments}
  with $\nu := 2/\rho$ to get that
  \begin{equation*}
    \norm{h(t_0)}_{k,\nu}^2
    \leq
    \norm{h^0}_{k,\mu}^2 \left( \frac{\nu}{\mu} \right)^K,
  \end{equation*}
  where $K$ is the one from the lemma. Now, as we are assuming a
  spectral gap of a certain size $\delta$ in $X_{k,\nu}$, we have for
  $t \geq t_0$
  \begin{multline*}
    \norm{h(t)}_{k,\mu}^2 \leq \norm{h(t)}_{k,\nu}^2 \leq
    \norm{h(t_0)}_{k,\nu}^2 e^{-2\delta (t-t_0)}
    \\
    \leq \norm{h^0}_{k,\mu}^2 \left( \frac{\nu}{\mu} \right)^K
    e^{-2\delta (t-t_0)}
    =
    \norm{h^0}_{k,\mu}^2 \left( \frac{\nu}{\mu}
    \right)^{K+2\delta} e^{-2\delta t}
    .
  \end{multline*}
  Together with eq. \eqref{eq:estimate_before_t0}, this shows that
  \begin{equation}
    \label{eq:spectral_gap_mu}
    \norm{h(t)}_{k,\mu}
    \leq
    C \norm{h^0}_{k,\mu} e^{-\delta t}
    \quad \text{ for all } t \geq 0,
  \end{equation}
  with
  \begin{equation*}
    C^2 := \max \left\{
      \left( \frac{\nu}{\mu} \right)^{K+2\delta},
      \left( \frac{\nu}{\mu} \right)^{K_1}
    \right\}.
  \end{equation*}
  The result is extended to all $h^0 \in X_{k,\mu}$ by density.
\end{proof}

Then, from the spectral gap we showed in Lemma \ref{lem:spectral gap
  X_-1,2/rho} we obtain the following:

\begin{cor}
  \label{cor:spectral gap (-1,mu) and (0,mu)}
  Take $0 < \mu \leq 2/\rho$, and consider the semigroup $e^{tL}$
  generated by $L$ in the space $X_{-1, \mu}$. Then there is a number
  $K = K(\mu) \geq 1$ such that
  \begin{equation*}
    \norm{e^{tL} h^0}_{-1,\mu}
    \leq K \norm{h^0}_{-1,\mu} e^{- t}
    \quad
    \text{ for any } h^0 \in X_{-1,\mu}.
  \end{equation*}
\end{cor}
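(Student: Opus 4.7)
The statement is essentially a direct corollary of the two preceding results, and I would organize the argument by splitting on whether $\mu$ equals or is strictly below $2/\rho$.

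First, for the endpoint case $\mu = 2/\rho$, the conclusion is exactly Lemma \ref{lem:spectral gap X_-1,2/rho}: the dissipation estimate $\langle h, Lh\rangle_{-1,2/\rho} \leq -\|h\|_{-1,2/\rho}^2$ yields the semigroup decay $\|e^{tL}h^0\|_{-1,2/\rho} \leq \|h^0\|_{-1,2/\rho}e^{-t}$ with constant $K=1$ and rate $\delta=1$. So nothing new is needed in this case.

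For $0 < \mu < 2/\rho$, I would apply Lemma \ref{lem:spectral gap extension} with $k = -1$, $\nu = 2/\rho$ (noting that $2/\rho < 4/\rho$, as required by the lemma's hypothesis), and $\delta = 1$, $C = 1$ (given by Lemma \ref{lem:spectral gap X_-1,2/rho}). The extension lemma then produces the constant $C' = C'(\mu,1)$ and the desired estimate
\[
\|e^{tL}h^0\|_{-1,\mu} \leq C' \|h^0\|_{-1,\mu}\,e^{-t} \qquad (t \geq 0).
\]
Setting $K := \max\{1, C'\}$ covers both cases uniformly.

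The main (and only) content is thus wrapped up in the two earlier lemmas: the spectral gap in the ``natural'' weight $2/\rho$ suggested by the linearized Aizenman--Bak inequality, and the creation-of-moments mechanism of Lemma \ref{lem:creation_exponential_moments} that allows us to transfer the gap from the large weight $\nu = 2/\rho$ down to any smaller weight $\mu$. There is no genuine obstacle here: the corollary is a packaging of Lemma \ref{lem:spectral gap X_-1,2/rho} and Lemma \ref{lem:spectral gap extension}, and the only minor point to verify is that $\nu = 2/\rho$ lies in the admissible range $(0,4/\rho)$ of the extension lemma, which it plainly does.
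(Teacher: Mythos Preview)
Your proof is correct and follows exactly the approach intended by the paper: the corollary is stated there as an immediate consequence of Lemma~\ref{lem:spectral gap X_-1,2/rho} (giving the gap at $\mu=2/\rho$ with $C=1$, $\delta=1$) combined with Lemma~\ref{lem:spectral gap extension} (transferring it to any $0<\mu<2/\rho$), and you have identified and verified the hypotheses of both ingredients.
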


\subsection{Extension of the spectral gap to $X_{k,\mu}$ with $k \geq 0$}
\label{sec:spectral-gap-derivative}

Let us shortly discuss the strategy of the proofs in this section. One
could try the following argument in order to obtain a spectral gap for
a norm involving a certain derivative $D^k h$ of order $k \geq 0$.  We
could hope that, when calculating
\begin{equation*}
  \frac{d}{dt} \int_0^\infty (D^k h (y))^2 e^{\frac{2}{\rho} y} \,dy,
\end{equation*}
one obtains a negative multiple of this same norm, plus some terms
which involve $L^2$ norms of $h$ with some weight, and which can
already be controlled by the semigroup decay results in previous
sections; this idea can be arrived at by trying it for $k=0$, in which
case it works perfectly. But for $k > 0$, the term which involves the
same norm is actually a \emph{positive} multiple of it, so our
tentative argument does not directly give a spectral gap in this case.
It is for this reason that we consider instead norms with a power
weight $y^m$ for a certain $m$: one is forced to raise the power $m$
if one wants to obtain a spectral gap involving $L^2$ norms of higher
derivatives of $h$. Then, another problem appears: if one calculates,
say, the time derivative of $\int y^m (h')^2 e^{\mu y}$, then some
terms which involve $L^2$ norms of $h$ will appear \emph{with the same
  weight} $y^m$ as we were using in the first place: this is, a term
involving $\int h^2 y^m e^{\mu y}$ will appear, for which we have no
previous spectral gap results. Hence, to close the estimates we use a
spectral gap in a norm with an exponential weight of order slightly
higher than the one we started with.

\begin{lem}
  \label{lem:semigroup_decay_Hk}
  Take $0 < \mu < 2/\rho$ and an integer $k \geq 0$, and consider the
  operator $L$ in the space $X_{k,\mu}$, with domain
  $X_{k+1,\mu}$. Then, $L$ has a spectral gap in $X_{k,\mu}$; more
  precisely, there is some constant $K = K(k, \mu, \rho) > 0$ such
  that
  \begin{equation}
    \label{eq:semigroup-decay-mu}
    \norm{e^{tL} h^0}_{k,\mu} \leq K \norm{h^0}_{k,\mu} e^{-t}
  \end{equation}
  for any $h^0 \in X_{k,\mu}$.
\end{lem}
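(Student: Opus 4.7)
The plan is induction on $k$, with the case $k = -1$ (supplied by Corollary \ref{cor:spectral gap (-1,mu) and (0,mu)}) serving as the induction base. Fix $0 < \mu < 2/\rho$ and pick $\mu < \tilde\mu < 2/\rho$. The argument rests on three ingredients: a differential inequality for $\norm{h}_{k,\mu}^2$ whose lower-order correction sits in $X_{k-1,\tilde\mu}$, the inductive hypothesis to propagate the correction exponentially, and Lemma \ref{lem:creation_exponential_moments} to upgrade the initial datum $h^0 \in X_{k,\mu}$ so that the higher-weight norm $\norm{h(t_1)}_{k-1,\tilde\mu}$ becomes controlled after a finite time $t_1$.

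First I would compute $\tfrac12\tfrac{d}{dt}\norm{h}_{k,\mu}^2 = \ap{h,Lh}_{k,\mu}$ using the form $Lh = y h' - \tfrac{4}{\rho} H + \tfrac{2}{\rho} H * g_\rho$ from \eqref{eq:simpler-L}. Integration by parts in the transport term produces
\begin{equation*}
  \int y^{2k+3}(D^{k+1}h)(D^k h)\,e^{\mu y}\,dy
  = -\frac{2k+3}{2}\norm{h}_{k,\mu}^2 - \frac{\mu}{2}\int y^{2k+3}(D^k h)^2 e^{\mu y}\,dy,
\end{equation*}
which combined with the Leibniz constant $k$ from $D^k(yh') = yD^{k+1}h + k D^k h$ gives a main contribution $k - \tfrac{2k+3}{2} = -\tfrac{3}{2}$, strictly less than $-1$. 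The two remaining cross terms $-\tfrac{4}{\rho}\int y^{2k+2} D^k h\, D^{k-1} h\, e^{\mu y}$ and $\tfrac{2}{\rho}\int y^{2k+2} D^k h\, D^k(H*g_\rho)\,e^{\mu y}$, the latter expanded via a Leibniz-type formula akin to Lemma \ref{lem:Dk(yk g*h))}, are dominated using Cauchy--Schwarz by products of $\norm{h}_{k,\mu}$ with $L^2(e^{\mu y})$-norms of $y^{k+1}D^j h$ for $-1 \leq j \leq k-1$. Each such weight carries one extra power of $y$ compared to the natural norm $\norm{h}_{j,\mu}$, which I absorb via $y^2 \leq \epsilon^{-2} e^{2\epsilon y}$ with $\mu + 2\epsilon \leq \tilde\mu$; Lemma \ref{lem:Hardy-L2} then collapses the lower orders $j \leq k-1$ into $\norm{h}_{k-1,\tilde\mu}$, and Young's inequality with small parameter yields
\begin{equation*}
  \frac{d}{dt}\norm{h}_{k,\mu}^2
  \leq -(2+\eta)\norm{h}_{k,\mu}^2 + C\,\norm{h}_{k-1,\tilde\mu}^2
\end{equation*}
for some $\eta > 0$ and $C = C(k,\mu,\tilde\mu,\rho) > 0$.

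Next I would invoke the inductive hypothesis in $X_{k-1,\tilde\mu}$, legitimate since $\tilde\mu < 2/\rho$, to write $\norm{h(t)}_{k-1,\tilde\mu} \leq K_{k-1} e^{-(t-t_1)}\norm{h(t_1)}_{k-1,\tilde\mu}$ for $t \geq t_1$, once the flow visits $X_{k-1,\tilde\mu}$ at some time $t_1$. This is furnished by Lemma \ref{lem:creation_exponential_moments} at $t_1 := \log(\tilde\mu/\mu)$, which gives $\norm{h(t_1)}_{k,\tilde\mu} \leq C_1 \norm{h^0}_{k,\mu}$, and hence, by Lemma \ref{lem:Hardy-L2}, $\norm{h(t_1)}_{k-1,\tilde\mu} \leq C_2 \norm{h^0}_{k,\mu}$. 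Plugging $\norm{h(t)}_{k-1,\tilde\mu}^2 \leq C_3 e^{-2(t-t_1)}\norm{h^0}_{k,\mu}^2$ into the differential inequality and solving the scalar ODE $u' \leq -(2+\eta)u + C_4 e^{-2(t-t_1)}$ produces $\norm{h(t)}_{k,\mu}^2 \leq K^2 e^{-2t}\norm{h^0}_{k,\mu}^2$ for $t \geq t_1$. The pre-$t_1$ regime is handled by the crude bound \eqref{eq:L generates a semigroup} of Proposition \ref{prp:L closed unbounded} at the cost of enlarging the constant, which closes the inductive step.

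The main technical obstacle is the bookkeeping of the cross terms, especially the expansion of $D^k(H*g_\rho)$, so as to ensure that every piece is majorized by a norm with at most $k-1$ derivatives (never $k$) at the cost of only a small increase in exponential weight. A subtler point, which is the reason the scheme works at all, is that the leading dissipation coefficient must be \emph{strictly} below $-2$: an equality $-2$ would generate a spurious $t\,e^{-2t}$ factor via the Duhamel integral and degrade the rate below $1$, so the strict inequality $k - \tfrac{2k+3}{2} < -1$ coming from the transport identity is indispensable for preserving the sharp rate $1$.
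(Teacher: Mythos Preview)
Your proposal is correct and follows essentially the same route as the paper: induction on $k$ with base $k=-1$, the splitting $Lh = yh' - \tfrac{4}{\rho}H + \tfrac{2}{\rho}H*g_\rho$, the integration-by-parts identity yielding the leading coefficient $-\tfrac{3}{2}$, the absorption of the extra power of $y$ in the cross terms by raising the exponential weight from $\mu$ to some $\tilde\mu<2/\rho$, the use of Lemma~\ref{lem:creation_exponential_moments} at time $\log(\tilde\mu/\mu)$, and the Gronwall/pre-$t_1$ closure via \eqref{eq:L generates a semigroup}. The only slip is a sign in the cross term (it should be $+\tfrac{4}{\rho}\int y^{2k+2}D^k h\,D^{k-1}h\,e^{\mu y}$, since $D^kH=-D^{k-1}h$), which is harmless since you bound it in absolute value anyway.
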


\begin{proof}
  Take $k \geq 0$, assume the result true up to $k-1$ (the case $k =
  -1$ was proved in Corollary \ref{cor:spectral gap (-1,mu) and
    (0,mu)}). Denote by $h$ the solution of the linear equation with
  initial condition $h^0$ (i.e., $h(t,\cdot) = e^{tL} h^0$), and in
  order to simplify the notation write $\norm{\cdot}$ instead of
  $\norm{\cdot}_{k,\mu}$ and $\ap{\cdot, \cdot}$ instead of
  $\ap{\cdot,\cdot}_{k,\mu}$. The numbers $K, K_1, K_2,\dots$ which
  appear in this proof are understood to be positive and depend only
  on $k,\mu,\rho$ and $\delta$.

  Pick $\mu < \nu < 2/\rho$. Then, after the time $t_0$ given in Lemma
  \ref{lem:creation_exponential_moments}, the norm
  $\norm{h(t)}_{k,\nu}$ is finite and we have, from the expression for
  $L$ in eq. \eqref{eq:simpler-L}:
  \begin{equation}
    \label{eq:spk1}
    \frac{1}{2}
    \frac{d}{dt} \norm{h}^2
    =
    \ap{h, Lh}
    =
    \ap{h, y h'}
    - \frac{4}{\rho} \ap{h, H}
    + \frac{2}{\rho} \ap{h, H*g_\rho}.
  \end{equation}
  For the first term in \eqref{eq:spk1},
  eqs. \eqref{eq:sg2}--\eqref{eq:sg3} show that
  \begin{equation}
    \label{eq:spk2}
    \ap{h, y h'}
    \leq
    - \frac{3}{2} \norm{h}^2.
  \end{equation}

  For the middle term in eq. \eqref{eq:spk1},
  \begin{multline}
    \label{eq:spk-middle}
    \ap{h, H}
    = \int (D^k h) (D^k H) y^{2k+2} e^{\mu y} \,dy
    \\
    = - \int (D^k h) (D^{k-1} h) y^{2k+2} e^{\mu y} \,dy
    \\
    =
    (k+1) \int (D^{k-1} h)^2 y^{2k+1} e^{\mu y} \,dy
    \\
    + \frac{\mu}{2} \int (D^{k-1} h)^2 y^{2k+2} e^{\mu y} \,dy
    \leq
    K_1 \norm{h}_{k-1,\nu}^2,
  \end{multline}
  where in the case $k = 0$ we denote $D^{-1} h := -H$.  Observe that
  here the exponential weight has been changed from $\mu$ to $\nu$, as
  the power weights in the norms which appear are not the right ones
  for the $\norm{\cdot}_{k-1,\mu}$ norm, but higher. We overcome this
  difficulty by using a norm with a slightly higher exponential
  weight.
  
  To bound the middle term in \eqref{eq:spk1}, we will need the
  following expression for $D^k (h * g_\rho)$, which can easily be
  proved by induction:
  \begin{equation}
    \label{eq:spk-Dh*g}
    D^k(H*g_\rho)
    =
    H * (D^k g_\rho)
    + \sum_{i=0}^{k-1} (D^{i-1} h) (D^{k-1-i} g_\rho)(0).
  \end{equation}
  From this,
  \begin{multline}
    \label{eq:spk7}
    \norm{H * g_\rho}
    =
    \norm{ y^{k+1} D^k(H*g_\rho)}_{L^2(e^{\mu y})}
    \\
    \leq
    \norm{y^{k+1} (H * (D^k g_\rho))}_{L^2(e^{\mu y})}
    + \sum_{i=0}^{k-1} \abs{(D^{k-1-i} g_\rho)(0)}
    \norm{y^k D^{i-1} h}_{L^2(e^{\mu y})}
    \\
    \leq
    \norm{y^{k+1} (H * (D^k g_\rho))}_{L^2(e^{\mu y})}
    + K_2 \norm{h}_{k-1,\nu}
    ,
  \end{multline}
  where the last bound is possible because all the norms inside the
  sum are bounded by a constant times $\norm{h}_{k,\nu}$, using a
  higher exponential weight as before. For the remaining term in
  \eqref{eq:spk7},
  \begin{equation*}
    y^{k+1} (H * (D^k g_\rho))
    =
    \sum_{i=0}^{k+1}
    \binom{k+1}{i}
    (y^i H) * (y^{k+1-i} D^k g_\rho))
  \end{equation*}
  and hence
  \begin{multline*}
    \norm{y^{k+1} (H * (D^k g_\rho))}_{L^2(e^{\mu y})}
    \leq
    \\
    \sum_{i=0}^{k+1}
    \binom{k+1}{i}
    \norm{(y^i H) * (y^{k+1-i} D^k g_\rho))}_{L^2(e^{\mu y})}
    \\
    \leq
    \sum_{i=0}^{k+1}
    \binom{k+1}{i}
    \norm{y^i H}_{L^2(e^{\mu y})} \int y^{k+1-i}
    \abs{D^k g_\rho} e^{\frac{\mu}{2}y}
    \\
    \leq
    K_3
    \norm{h}_{-1,\nu}    
    \leq
    K_4
    \norm{h}_{k-1,\nu}    
    .
  \end{multline*}
  Notice that the integrals involving $g_\rho$ are numbers that depend
  only on $k,\mu$ and $\rho$. From \eqref{eq:spk7},
  \begin{equation*}
    \norm{H*g_\rho}
    \leq
    K_5 \norm{h}_{k-1,\nu},
  \end{equation*}
  so
  \begin{equation}
    \label{eq:spk8}
    \abs{\ap{h, H*g_\rho}}
    \leq
    \norm{h} \norm{H*g_\rho}
    \leq
    K_5 \norm{h} \norm{h}_{k-1,\nu}.
  \end{equation}
  Finally, all three terms from \eqref{eq:spk1} are bounded in
  \eqref{eq:spk2}, \eqref{eq:spk-middle} and \eqref{eq:spk8}, and
  together give
  \begin{multline*}
    \frac{1}{2}
    \frac{d}{dt} \norm{h}^2
    \leq
    - \frac{3}{2} \norm{h}^2
    + \frac{4}{\rho} K_1 \norm{h}_{k-1,\nu}^2
    + \frac{2}{\rho} K_5 \norm{h} \norm{h}_{k-1,\nu}
    \\
    \leq
    - \frac{5}{4} \norm{h}^2
    + K_6 \norm{h}_{k-1,\nu}^2
    ,
  \end{multline*}
  where we used the elementary inequality $\norm{h} \norm{h}_{k-1,\nu}
  \leq \epsilon^2 \norm{h}^2 + (1/\epsilon) \norm{h}_{k-1,\nu}^2$ for
  an arbitrary $\epsilon > 0$.
  Thanks to the spectral gap in $\norm{\cdot}_{k-1,\nu}$ and Lemma
  \ref{lem:creation_exponential_moments}, we know that for any $\delta
  < 1$ there is some constant $K > 0$ such that $\norm{h(t)}_{k-1,\nu}
  \leq K \norm{h(t_0)}_{k-1,\nu} e^{-\delta (t-t_0)}$ for $t \geq
  t_0$. Hence, for $t \geq t_0$,
  \begin{multline*}
    \frac{1}{2}\frac{d}{dt} \norm{h}^2
    \leq
    - \frac{5}{4} \norm{h}^2
    + K_7 \norm{h^0}_{k-1,\nu}^2 e^{-2 \delta(t-t_0)}
    \\
    \leq
    - \frac{5}{4} \norm{h}^2
    + K_8 \norm{h^0}^2 e^{-2 \delta t}
  \end{multline*}
  A Gronwall lemma, together with inequality (\ref{eq:L generates a
    semigroup}) for $t \leq t_0$, finishes the proof.
\end{proof}

\section{Behavior of moments}
\label{sec:behavior-moments}

For later use we will need to have some information on the time
evolution of exponential moments of solutions to the coagulation
equation \eqref{eq:ss-coag-lambda=0}, which is also interesting by
itself, as this equation is one of the few particular cases where one
can find an explicit expression for the evolution of moments. For a
function $g: (0,\infty) \to \RR$ we denote
\begin{gather}
  \label{eq:dfn-potential-moments}
  M_k[g] := \int_0^\infty y^k \abs{g(y)} \,dy
  \quad \text{ for } k \in \RR,
  \\
  \label{eq:dfn-exp-moments}
  E_\mu[g] := \int_0^\infty e^{\mu y} \abs{g(y)} \,dy
  \quad \text{ for } \mu \in \RR.
\end{gather}
In general it is slightly simpler to solve explicitly the evolution of
moments in eq. \eqref{eq:coag-eq} than in eq.
\eqref{eq:ss-coag-lambda=0}; but of course, one of them being a
rescaling of the other, there is a simple relationship between the
moments of their solutions: if $f$ and $g$ are related by the change
of variables in eqs.
\eqref{eq:change-forward}--\eqref{eq:change-backward}, then the
relationship between moments of $f$ and moments of $g$ is
\begin{gather}
  \label{eq:change-moments-fw}
  M_k[g(t)]
  =
  e^{t(1-k)} \,M_k[f (e^{t} - 1)]
  \\
  \label{eq:change-moments-bw}
  M_k[f(t)]
  =
  (t+1)^{k-1}
  \,M_k[g\big( \log(1+t) \big)].
\end{gather}
That between exponential moments of $f$ and $g$ is
\begin{gather}
  \label{eq:change-exp-moments-fw}
  E_\mu[g(t)]
  =
  e^t \,E_{\mu e^{-t}} [f
  \big( e^{t} - 1 \big) ],
  \\
  \label{eq:change-exp-moments-bw}
  E_\mu[f(t)]
  =
  \frac{1}{t+1}
  \,E_{\mu (t+1)} [g\big( \log(t+1) \big)]. 
\end{gather}

The evolution of moments of order $0$, $1$ and $2$ for a nonnegative
solution $f$ to eq. \eqref{eq:coag-eq} (or the corresponding
solution $g$ to eq. \eqref{eq:ss-coag-lambda=0}) is easily obtained
and well-known: the first moment is a constant, called its $mass$. For
the moment of order 0,
\begin{equation*}
  \frac{d}{dt} M_0[f] = -\frac{1}{2} M_0[f]^2,
\end{equation*}
so
\begin{equation}
  \label{eq:explicit-M0}
  M_0[f] = \frac{2}{t + 2/M_0[f^0]}.
\end{equation}
Using the relation in eq. \eqref{eq:change-moments-fw} (with $\lambda
= 0$) we have
\begin{equation}
  \label{eq:explicit-M0-ss}
  M_0[g]
  =
  \frac{2}{1 - e^{-t} + 2\,e^{-t}/M_0[g^0]}.
\end{equation}
The second moment of $f$ in eq. \eqref{eq:coag-eq} with $a \equiv 1$
satisfies the equation
\begin{equation*}
  \frac{d}{dt} M_2[f]
  =
  \frac{1}{2} M_1[f]^2 = \frac{1}{2} \rho^2,
\end{equation*}
where $\rho$ is the mass of the solution. Hence,
\begin{equation}
  \label{eq:explicit-M2}
  M_2[f] = M_2[f^0] + \frac{1}{2}\rho^2 t,
\end{equation}
and the second moment for the self-similar equation is
\begin{equation}
  \label{eq:explicit-M2-ss}
  M_2[g] = e^{-t} M_2[g^0] + \frac{1}{2} \rho^2 (1 - e^{-t}).
\end{equation}
For exponential moments one can also find an explicit expression:
again from eq. \eqref{eq:coag-eq}, and using eq.
\eqref{eq:explicit-M0},
\begin{equation*}
  \begin{split}
    \frac{d}{dt} E_\mu[f]
    &= \frac{1}{2} E_\mu[f]^2 - M_0[f] E_\mu[f]
    \\
    &= \frac{1}{2} E_\mu[f]^2 - \frac{2}{t + K} E_\mu[f],
  \end{split}
\end{equation*}
with $K := 2/M_0[f^0]$. This has an explicit solution:
\begin{equation}
  \label{eq:explicit-Emu}
  E_\mu[f(t)]
  =
  \frac{2}{t + \frac{2}{M_0^0}}
  +
  \frac{2}{\frac{2}{E_\mu^0 - M_0^0} - t},
\end{equation}
where $M_0^0$ and $E_\mu^0$ denote $M_0[f(0)]$ and $E_\mu[f(0)]$,
resp.

Using eq. \eqref{eq:change-exp-moments-fw} we obtain the evolution of
exponential moments for the self-similar equation with $\lambda = 0$:
\begin{equation}
  \label{eq:explicit-Emu-ss}
  E_\mu[g(\log(s))]
  =
  \frac{2s}{s-1 + \frac{2}{M_0^0}}
  +
  \frac{2s}{\frac{2}{E_{\mu/s}^0 - M_0^0} - s+1}.
\end{equation}
This directly implies the following lemma:
\begin{lem}
  \label{lem:exp-moment-finite}
  If $g$ is a solution of eq. \eqref{eq:ss-coag-lambda=0} with initial
  data $g^0$ and $\mu > 0$, then $E_\mu[g]$ is finite for all $t > 0$
  if and only if the initial data satisfies
  \begin{equation}
    \label{eq:condition-exp-moment-finite}
    {E_{\theta}^0 - M_0^0} < \frac{2}{\frac{\mu}{\theta}-1}
    \quad
    \text{ for all } 0 < \theta < \mu.
  \end{equation}
  $E_\mu[g]$ is uniformly bounded for all $t \geq 0$ if and only if
  the initial data satisfies the following for some $\nu > \mu$:
  \begin{equation}
    \label{eq:condition-exp-moment-uniformly-bounded}
    {E_{\theta}^0 - M_0^0} \leq \frac{2}{\frac{\nu}{\theta}-1}
    \quad
    \text{ for all } 0 < \theta < \mu.
  \end{equation}
  In particular, if $E_\nu[g]$ is finite for all $t > 0$, then for
  each $\mu < \nu$ the moment $E_\mu[g]$ is uniformly bounded for all
  $t \geq 0$.
\end{lem}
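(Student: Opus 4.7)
The plan is to start from the explicit formula \eqref{eq:explicit-Emu-ss}, which expresses $E_\mu[g(\log s)]$ in closed form in terms of the initial data. The whole lemma thereby reduces to an elementary analysis of the right-hand side as $s=e^t$ varies over $[1,\infty)$. Introducing $\theta:=\mu/s$, which decreases continuously from $\mu$ to $0$ as $s$ runs from $1$ to $\infty$, the formula reads
\begin{equation*}
E_\mu[g(\log s)] \;=\; \frac{2s}{s-1+2/M_0^0} \;+\; \frac{2s}{\frac{2}{E_\theta^0 - M_0^0} - (s-1)}.
\end{equation*}
The first summand is continuous and uniformly bounded by $\max\{M_0^0,2\}$ for $s\geq 1$, so both claims of the lemma concern only the second summand, which I denote by $(\star)$.

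For the finiteness part, since $g^0\geq 0$ the function $\theta\mapsto E_\theta^0-M_0^0 = \int(e^{\theta y}-1)\,g^0(y)\,dy$ is positive and continuous on $(0,\mu]$, so the denominator of $(\star)$ is a continuous function of $s$ that equals $2/(E_\mu^0-M_0^0)>0$ at $s=1$. If \eqref{eq:condition-exp-moment-finite} holds, this denominator stays strictly positive for every $s\geq 1$ and $(\star)$ is finite throughout; if instead \eqref{eq:condition-exp-moment-finite} fails at some $\theta_0\in(0,\mu)$, an intermediate-value argument produces a first time $s_*\in[1,\mu/\theta_0]$ at which the denominator vanishes, and $(\star)$ blows up there.

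For the uniform boundedness, I would rewrite \eqref{eq:condition-exp-moment-uniformly-bounded} as $\frac{2}{E_\theta^0-M_0^0} \geq \nu/\theta-1$ for $\theta\in(0,\mu)$, which, using $s-1=\mu/\theta-1$, is equivalent to the lower bound
\begin{equation*}
\frac{2}{E_\theta^0-M_0^0} - (s-1) \;\geq\; \frac{\nu-\mu}{\theta}
\end{equation*}
on the denominator of $(\star)$; combined with the numerator $2s=2\mu/\theta$ this yields $(\star)\leq 2\mu/(\nu-\mu)$ uniformly in $s$. Conversely, a uniform bound $(\star)\leq C$ forces the denominator to exceed $2s/C$, and unwinding this gives \eqref{eq:condition-exp-moment-uniformly-bounded} with $\nu=\mu(1+2/C)$. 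The final assertion of the lemma then follows immediately: if $E_\nu[g(t)]$ is finite for every $t>0$, the first part applied to $\nu$ gives \eqref{eq:condition-exp-moment-finite} with $\nu$ in place of $\mu$, and for any $\mu'<\nu$ this in particular implies \eqref{eq:condition-exp-moment-uniformly-bounded} with $\mu'$ and $\nu$, so $E_{\mu'}[g]$ is uniformly bounded.

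The whole argument is essentially algebraic; the only delicate point is keeping careful track of signs and of the correspondence $s\leftrightarrow\theta$, particularly in the converse direction, where continuity of $\theta\mapsto E_\theta^0$ is needed to turn a pointwise failure of the condition into an actual singularity of $E_\mu[g(t)]$ at some finite time.
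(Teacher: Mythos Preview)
Your proposal is correct and is exactly the approach the paper intends: the paper gives no proof at all, merely stating that the lemma is a direct consequence of the explicit formula \eqref{eq:explicit-Emu-ss}. You have supplied precisely the elementary analysis of that formula which the paper leaves to the reader, including the substitution $\theta=\mu/s$, the bound on the first summand, and the translation of conditions \eqref{eq:condition-exp-moment-finite}--\eqref{eq:condition-exp-moment-uniformly-bounded} into positivity and quantitative lower bounds on the denominator of the second summand.
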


Next we prove that if \emph{some} positive exponential moment is
initially finite, then every exponential moment less than
$\frac{2}{\rho}$ becomes bounded after some time. The precise result
is the following:

\begin{lem}
  \label{lem:Emu-finite-after-T}
  Let $g$ be a solution of eq. \eqref{eq:ss-coag-lambda=0} with
  initial data $g^0$. Assume that there exists $\mu > 0$ such that
  $E_\mu[g^0] < \infty$. Then, for each $0 < \nu < \frac{2}{\rho}$
  there is a time $T_\nu \geq 0$ (which depends on the initial
  condition $g^0$) such that
  \begin{equation}
    \label{eq:Emu-finite-after-T}
    E_\nu[g(t)] < \infty
    \quad \text{ for all } t > T_\nu.
  \end{equation}
  As a consequence, for each $0 < \nu < \frac{2}{\rho}$ there is a
  time $T_\nu^* \geq 0$ and a constant $K_\nu > 0$ such that
  \begin{equation}
    \label{eq:Emu-bounded-after-T}
    E_\nu[g(t)] \leq K_\nu
    \quad \text{ for all } t \geq T_\nu^*.
  \end{equation}
\end{lem}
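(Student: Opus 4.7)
The plan is to apply the explicit formula \eqref{eq:explicit-Emu-ss} for the exponential moment along the self-similar flow. Setting $s = e^t$ and $\mu = \nu$, the formula reads
\begin{equation*}
  E_\nu[g(t)] = \frac{2 e^t}{e^t - 1 + 2/M_0^0}
  + \frac{2 e^t}{\frac{2}{E_{\nu e^{-t}}^0 - M_0^0} - e^t + 1},
\end{equation*}
and it yields a finite positive value exactly when (a) $E_{\nu e^{-t}}^0$ is itself finite, and (b) the denominator of the second term is positive. Condition (a) is ensured as soon as $\nu e^{-t} \le \mu$, i.e.\ $t \ge \log(\nu/\mu)$, because then $E_{\nu e^{-t}}^0 \le E_\mu[g^0] < \infty$ by hypothesis; condition (b) is equivalent to
\begin{equation*}
  (e^t - 1)\bigl(E_{\nu e^{-t}}^0 - M_0^0\bigr) < 2,
\end{equation*}
and the main task is to verify this for all sufficiently large $t$.

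To that end, I would analyse the left-hand side as $t \to \infty$. Writing $a := \nu e^{-t}$,
\begin{equation*}
  \frac{E_a^0 - M_0^0}{a}
  = \int_0^\infty \frac{e^{a y} - 1}{a}\, g^0(y)\,dy.
\end{equation*}
The map $x \mapsto (e^x - 1)/x$ is nonnegative and nondecreasing on $(0,\infty)$, so once $a \le \mu$ the integrand is dominated by $\mu^{-1}(e^{\mu y} - 1)\, g^0(y)$, which is integrable thanks to $E_\mu[g^0] < \infty$. Since $(e^{ay} - 1)/a \to y$ pointwise as $a \to 0$, dominated convergence gives $(E_{\nu e^{-t}}^0 - M_0^0)/(\nu e^{-t}) \to M_1^0 = \rho$ as $t \to \infty$, and hence
\begin{equation*}
  (e^t - 1)\bigl(E_{\nu e^{-t}}^0 - M_0^0\bigr)
  = (1 - e^{-t})\,\nu\,\frac{E_{\nu e^{-t}}^0 - M_0^0}{\nu e^{-t}}
  \longrightarrow \nu\rho
  \quad \text{as } t \to \infty.
\end{equation*}
The hypothesis $\nu < 2/\rho$, i.e.\ $\nu\rho < 2$, then forces the strict inequality in (b) to hold for every $t$ beyond some threshold $T_\nu$; for such $t$ the formula produces $E_\nu[g(t)] < \infty$, which is \eqref{eq:Emu-finite-after-T}.

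The boundedness claim \eqref{eq:Emu-bounded-after-T} follows from the same formula: on $(T_\nu, \infty)$ its right-hand side is continuous in $t$ and, by the calculation above, converges to $2 + \frac{2\nu\rho}{2 - \nu\rho} = \frac{4}{2 - \nu\rho}$ as $t \to \infty$ (reassuringly equal to $E_\nu[g_\rho]$). Continuity on any closed subinterval of $(T_\nu, \infty)$ together with a finite limit at infinity produces a uniform bound on $[T_\nu^*, \infty)$ with $T_\nu^* := T_\nu + 1$. The only genuinely delicate point is the dominated convergence step identifying the limit $\nu\rho$; once that is in place, the entire lemma reduces to bookkeeping on the explicit expression, and the threshold $2/\rho$ appears naturally as the critical value at which the asymptotic inequality in (b) becomes tight.
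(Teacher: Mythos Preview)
Your argument is correct and follows essentially the same route as the paper: both reduce the finiteness claim to the explicit formula \eqref{eq:explicit-Emu-ss} and check that the denominator stays positive by comparing the derivative of $\theta \mapsto E_\theta^0$ at $\theta=0$ (which is $\rho$) with $2/\nu$; your dominated-convergence step is exactly the justification that the paper leaves implicit. The only cosmetic difference is that the paper obtains the uniform bound \eqref{eq:Emu-bounded-after-T} by invoking Lemma~\ref{lem:exp-moment-finite} with some $\nu' \in (\nu, 2/\rho)$, whereas you argue directly that the explicit formula is continuous with a finite limit---both are fine.
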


\begin{proof}
  The uniform bound eq. \eqref{eq:Emu-bounded-after-T} is a
  consequence of eq. \eqref{eq:Emu-finite-after-T} in view of Lemma
  \ref{lem:exp-moment-finite}, so we will only prove
  eq. \eqref{eq:Emu-finite-after-T}. Take $0 < \nu <
  \frac{2}{\rho}$. In view of eq. \eqref{eq:explicit-Emu-ss}, it is
  enough to show that, for some $\mu > \epsilon > 0$,
  \begin{equation}
    \label{eq:proof-Emu-1}
    E_\theta[g^0] - M_0[g^0]
    <
    \frac{2}{\frac{\nu}{\theta} - 1}
    \quad
    \text{ for all }
    \theta \leq \epsilon,
  \end{equation}
  and then eq. \eqref{eq:Emu-finite-after-T} holds with $T_\nu := \log
  \frac{\nu}{\epsilon}$. As eq. \eqref{eq:proof-Emu-1} is a local
  inequality, it is enough to study both terms near $\theta = 0$: they
  are $0$ at $\theta = 0$, and their derivatives are:
  \begin{gather*}
    \frac{d}{d\theta}
    \int_0^\infty g(y) e^{\theta y} \,dy
    \Bigg \vert_{\theta = 0}
    =
    \int_0^\infty y g(y) \,dy = \rho
    \\
    \frac{d}{d\theta}
    \frac{2}{\frac{\nu}{\theta} - 1}
    \Bigg \vert_{\theta = 0}
    =
    \frac{2\nu}{(\nu - \theta)^2}
    \Bigg \vert_{\theta = 0}
    =
    \frac{2}{\nu}.
  \end{gather*}
  Hence, the inequality
  \begin{equation*}
    \nu < \frac{2}{\rho}
  \end{equation*}
  shows that eq. \eqref{eq:proof-Emu-1} holds for some $\epsilon > 0$,
  and the result is proved.
\end{proof}

\section{Local exponential convergence}
\label{sec:local}

In this section we will prove Theorem \ref{thm:local exponential
  convergence-intro}, which is a \emph{local} exponential convergence
result of the following kind: we show that if a solution $g$ to eq.
\eqref{eq:ss-coag-lambda=0} is initially \emph{close enough} to the
stationary solution, then it converges exponentially fast to it with a
given rate in the norm $\norm{\cdot}_{k,\mu}$ for $0 < \mu < 2/\rho$
and integer $k \geq -1$, as long as this norm is finite at time $t=0$.

Here, the concept of \emph{close enough} is measured in the sense that
the initial relative entropy to the equilibrium is small. The reason
for this is that this closeness must be enough to guarantee that the
nonlinear term in the evolution equation is small when compared to the
linear part, which we know is well-behaved. However, if $g$ is a
solution, the smallness of the nonlinear part essentially involves
norms of the kind $\int \abs{g-g_\rho}$, which cannot be controlled by
our norm $\norm{g-g_\rho}_{-1,2/\rho}$. Consequently, we need to
measure closeness in some way that is propagated in time: what is
needed in the proof of the theorem is that, for $\epsilon > 0$ small
enough,
\begin{equation}
  \label{eq:L1 norm condition}
  \int_0^\infty \abs{g(t,y)-g_\rho(y)} \,dy
  \leq \epsilon
  \quad \text{ for all } t \geq 0.
\end{equation}
We do not know of any simple condition on the initial data that
guarantees this to hold except requiring the relative entropy to the
equilibrium $g_\rho$ to be small. Then, as the $L^1$ norm of
$g-g_\rho$ is controlled by the relative entropy, we know that
(\ref{eq:L1 norm condition}) also holds. When we consider convergence
in norms $\norm{\cdot}_{k,\mu}$ with $k \geq 0$ we could control $\int
\abs{g-g_\rho}$ by the norm $\norm{g-g_\rho}_{k,\mu}$ and obtain a
result where closeness to $g_\rho$ is measured in terms of
$\norm{g-g_\rho}_{k,\mu}$, but this is less convenient for the global
result in Section \ref{sec:global}. Because of this, we always state
our results with an entropy condition.

Before proving our local convergence theorems we will need two
previous lemmas. In the first one we show that the function $F[g \vert
g_\rho]$ controls the distance of $g$ to the equilibrium $g_\rho$ in
the $L^1$ norm, and the second one will be used to estimate the
nonlinear term $C(g-g_\rho, g-g_\rho)$ which will appear when
calculating the time evolution of $\norm{g-g_\rho}_{k,\mu}$.

\begin{lem}
  \label{lem:particle_difference_estimate}
  There is some constant $K \geq 0$ such that
  \begin{equation*}
    \int_0^\infty \abs{ g(y) - g_\rho(y) } \,dy
    \leq
    K \max \left\{
      \sqrt{F[g \vert g_\rho]}, F[g \vert g_\rho]
    \right\}.
  \end{equation*}
\end{lem}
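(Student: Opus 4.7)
The plan is to reduce the estimate to a pointwise Csiszár--Kullback--Pinsker-type inequality for the convex function $\phi(x) := x\log x - x + 1$ (extended by $\phi(0)=1$), so that
\[
F[g\vert g_\rho] = \int_0^\infty g_\rho(y)\,\phi\!\left(\frac{g(y)}{g_\rho(y)}\right) dy,
\]
together with the fact that $\int g_\rho = 2$. Writing $u(y) := g(y)/g_\rho(y)$, I split the integral $\int |g-g_\rho|\,dy = \int g_\rho |u-1|\,dy$ at a fixed threshold, say $u = M$ with $M := 2e$, into a ``close to equilibrium'' part $A := \{u \le M\}$ and a ``far from equilibrium'' part $B := \{u > M\}$.

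On $B$, I would verify the elementary bound $|u-1| \le u \le C_1 \phi(u)$ for $u \ge M$, which holds because $\phi(u)/u = \log u - 1 + 1/u \ge \log M - 1 = \log 2 > 0$ on this range. This gives
\[
\int_B |g-g_\rho|\,dy \le C_1 \int_B g_\rho \phi(u)\,dy \le C_1 F[g\vert g_\rho].
\]
On $A$, the ratio $(u-1)^2/\phi(u)$ is continuous and finite on $[0,M]$ (bounded by its value at $u=0$, by its value at $u=M$, and by $2$ as $u\to 1$ via L'Hôpital), so there is $C_2 > 0$ with $(u-1)^2 \le C_2 \phi(u)$ on $A$. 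The Cauchy--Schwarz inequality with respect to the measure $g_\rho\,dy$ then yields
\[
\int_A |g-g_\rho|\,dy \le \left(\int_A g_\rho(u-1)^2\,dy\right)^{1/2}\!\!\left(\int_A g_\rho\,dy\right)^{1/2} \le \sqrt{2 C_2}\,\sqrt{F[g\vert g_\rho]}.
\]

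Adding the two bounds gives $\int|g-g_\rho|\,dy \le C_1 F[g\vert g_\rho] + \sqrt{2C_2}\sqrt{F[g\vert g_\rho]}$, which is controlled by $K\max\{\sqrt{F[g\vert g_\rho]},\,F[g\vert g_\rho]\}$ with $K := C_1 + \sqrt{2C_2}$, as claimed. There is no real obstacle here beyond checking the two pointwise inequalities for $\phi$; the mild novelty compared to the standard Pinsker inequality is that $g$ and $g_\rho$ need not have the same mass (nor be probability densities), which is exactly why the bound takes the form $\max\{\sqrt{F}, F\}$ rather than just $\sqrt{F}$ — the linear-in-$F$ tail contribution from the region $\{g \gg g_\rho\}$ is unavoidable without an a priori bound on $\int g$.
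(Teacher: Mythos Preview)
Your proof is correct, but the route differs from the paper's. The paper observes that
\[
F[g\vert g_\rho] = \int g_\rho\,\Psi\!\left(\frac{g}{g_\rho}-1\right)dy
\]
with $\Psi(y) := (y+1)\log(y+1)-y$ (your $\phi$ shifted by $1$), then uses the pointwise inequality $\Psi(y)\ge\Psi(|y|)$ together with Jensen's inequality for the convex function $\Psi$ against the probability measure $\tfrac{1}{2}g_\rho\,dy$ to obtain directly
\[
F[g\vert g_\rho]\;\ge\;2\,\Psi\!\left(\tfrac{1}{2}\!\int|g-g_\rho|\,dy\right),
\]
and finishes by inverting $\Psi$, using that $\Psi^{-1}(s)$ behaves like $\sqrt{s}$ for small $s$ and like $s$ for large $s$. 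Your approach instead splits the integral at a fixed level of $u=g/g_\rho$ and handles the two pieces by the pointwise bounds $|u-1|\le C_1\phi(u)$ (large $u$) and $(u-1)^2\le C_2\phi(u)$ (bounded $u$), combined with Cauchy--Schwarz. The paper's argument is shorter and yields the inequality in one stroke; yours is more elementary in that it avoids Jensen and makes the two regimes (giving the $\sqrt{F}$ and $F$ terms) explicit, which also clarifies your closing remark about why the $\max$ is needed when $\int g$ is not fixed.
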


\begin{proof}
  We can write
  \begin{equation*}
    F[g \vert g_\rho] =
    \int_0^\infty g_\rho(y)
    \Psi \left(\frac{g(y)}{g_\rho(y)} - 1\right) \,dy,
  \end{equation*}
  where $\Psi(y) := (y+1) \log(y+1) - y \geq 0$. As it happens that
  $\Psi$ is a convex function for which $\Psi(y) \geq \Psi(\abs{y})$,
  from Jensen's inequality we have
  \begin{multline*}
    F[g \vert g_\rho]
    \geq
    \int_0^\infty g_\rho(y)
    \Psi \left( \abs{
        \frac{g(y)}{g_\rho(y)} - 1
      } \right) \,dy
    \\
    \geq 2 \,
    \Psi \left(
      \frac{1}{2} \int_0^\infty \abs{g(y) - g_\rho(y)} \,dy
    \right),
  \end{multline*}
  where the 2 appears from $\int_0^\infty g_\rho = 2$. Hence,
  \begin{equation*}
    \int \abs{g - g_\rho}
    \leq
    2 \Psi^{-1}\left(
      \frac{1}{2} F[g \vert g_\rho]
    \right)
    \leq
    K \max \left\{
      \sqrt{F[g \vert g_\rho]}, F[g \vert g_\rho]
    \right\}
  \end{equation*}
  for some constant $K \geq 0$, which proves the lemma.
\end{proof}

\begin{lem}
  \label{lem:C(h,h) bound}
  For $\rho > 0$ and $0 < \mu < 4/\rho$ there is a constant $K =
  K(\mu) > 0$ such that
  \begin{equation}
    \label{eq:C(h,h) bound - (-1,mu)}
    \norm{C(h,h)}_{-1,\mu}
    \leq K \norm{h}_{-1,\mu} \int \abs{h} e^{\frac{\mu}{2}y} \,dy
    \qquad (h \in X_{-1,\mu}).
  \end{equation}
  Also, for integer $k \geq 0$ and $\mu < \nu < 4/\rho$ , there is a
  constant $K = K(k,\mu,\nu) > 0$ such that
  \begin{equation}
    \label{eq:C(h,h) bound - (k,mu)}
    \norm{C(h,h)}_{k,\mu}
    \leq
    K \norm{h}_{k,\mu} \int \abs{h} e^{\frac{\nu}{2}y} \,dy
    + K \norm{h}_{k-1,\nu}^2
  \end{equation}
  for all $h \in X_{k,\nu}$.
\end{lem}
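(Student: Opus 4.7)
The strategy is to split the proof by case on $k$ and to rely throughout on a weighted Young convolution inequality. Specifically, by pulling $e^{\mu y/2}$ into the convolution integral and invoking the standard $L^2 * L^1 \to L^2$ Young, one obtains
\[
\|f * g\|_{L^2(e^{\mu y})} \leq \|f\|_{L^2(e^{\mu y})} \int_0^\infty |g(y)|\, e^{\mu y/2}\, dy.
\]

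For $k = -1$ the starting point is the primitive identity of Lemma \ref{lem:primitive_C}, $2\int_y^\infty C(h,h) = h*H - H\int h$, so that $\|C(h,h)\|_{-1,\mu} = \tfrac{1}{2} \|h*H - H\int h\|_{L^2(e^{\mu y})}$. Applying the above weighted Young with $f = H$, $g = h$ bounds the first piece by $\|h\|_{-1,\mu} \int |h| e^{\mu y/2}$, and the second by the same quantity since $|\int h| \leq \int |h| e^{\mu y/2}$; this gives \eqref{eq:C(h,h) bound - (-1,mu)}.

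For $k \geq 0$ I would first decompose $C(h,h) = \tfrac{1}{2}(h*h) - h\int h$. The diagonal piece $h \int h$ gives directly $|\int h|\,\|h\|_{k,\mu} \leq \|h\|_{k,\mu} \int |h| e^{\nu y/2}$, which is absorbed into the first term of the claimed bound. For the convolution piece, I would invoke Lemma \ref{lem:equivalence of norms}---its vanishing hypothesis at $0$ being satisfied by $h*h$---to replace $\|y^{k+1} D^k(h*h)\|_{L^2(e^{\mu y})}$ by $\|D^k(y^{k+1}(h*h))\|_{L^2(e^{\mu y})}$, then expand binomially
\[
y^{k+1}(h*h) = \sum_{i=0}^{k+1} \binom{k+1}{i}\, (x^i h) * (x^{k+1-i} h),
\]
and distribute $D^k$ across each convolution as $D^{a_i}(x^i h) * D^{b_i}(x^{k+1-i} h)$ with $a_i + b_i = k$. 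Since $x^i h$ and $x^{k+1-i} h$ vanish to orders $i$ and $k+1-i$ respectively at the origin, any choice with $a_i \leq i$ and $b_i \leq k+1-i$ is admissible without boundary contributions. The endpoint indices $i=0$ and $i = k+1$ concentrate all $k$ derivatives on a single factor; the other factor being $h$, weighted Young together with Lemma \ref{lem:equivalence of norms} yields the principal contribution $K \|h\|_{k,\mu} \int |h| e^{\nu y/2}$. For intermediate $1 \leq i \leq k$, I would choose the split so that both factors end up at derivative order at most $k-1$, expand each by Leibniz, and apply Lemma \ref{lem:Hardy-L2} together with the norm-equivalence lemma to dominate each resulting factor by $\|h\|_{k-1,\nu}$, delivering a contribution of $K \|h\|_{k-1,\nu}^2$.

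The main obstacle lies in the bookkeeping for the intermediate indices: one must verify that a split $(a_i, b_i)$ exists keeping both factors at derivative level $\leq k-1$, and then chase every monomial arising from the Leibniz expansion of $D^{a_i}(x^i h)$ and $D^{b_i}(x^{k+1-i} h)$ through the Hardy and norm-equivalence inequalities to arrive at a clean $\|h\|_{k-1,\nu}^2$ bound. The exponential gap $\nu - \mu > 0$ is used precisely to trade polynomial weight in the Cauchy--Schwarz / weighted Young step, compensating for the mismatch between the $L^1(e^{\mu y/2})$ factor produced by the convolution estimate and the target $L^2(e^{\nu y})$-type norms appearing on the right-hand side.
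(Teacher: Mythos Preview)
Your approach matches the paper's: primitive identity plus weighted Young for $k=-1$; for $k\geq 0$, split $C(h,h)=\tfrac12 h*h - h\int h$, pass via Lemma~\ref{lem:equivalence of norms} to $\|D^k(y^{k+1}(h*h))\|_{L^2(e^{\mu y})}$, expand binomially, distribute the derivatives across the convolution (the paper does this via the explicit identity of Lemma~\ref{lem:Dk(yk g*h))}), and separate endpoint from interior indices.

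Two small corrections are worth noting. First, your justification that ``$x^i h$ vanishes to order $i$ at the origin'' is too strong for general $h\in X_{k,\nu}$: such $h$ may be singular at $0$, and the only vanishing available is that of eq.~\eqref{eq:limits at 0}, namely $y^{j+2}D^j h(y)\to 0$; this is precisely what the proof of Lemma~\ref{lem:Dk(yk g*h))} uses to kill the boundary terms, so you should invoke that lemma rather than argue informally. Second, the paper treats $i=1$ as an \emph{endpoint} term alongside $i=0$ and $i=k+1$ (it contributes $\|h\|_{k,\mu}\int|h|\,y\,e^{\mu y/2}$, which is then absorbed into $\int|h|\,e^{\nu y/2}$ using $\mu<\nu$), whereas your plan to make $i=1$ intermediate breaks down when $k=1$: there is no split $(a_1,b_1)$ with $a_1+b_1=1$ and both entries $\leq k-1=0$.
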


\begin{proof}
  Eq. (\ref{eq:C(h,h) bound - (-1,mu)}) is a consequence of the
  calculation in eq. (\ref{eq:C(g_rho,h)_bounded_in_(-1,mu)}) with $h$
  instead of $g_\rho$. Now, for $k \geq 1$ and $h \in X_{k,\mu}$, from
  the expression $C(h,h) = \frac{1}{2} h * h - h \int h$ we have
  \begin{equation}
    \label{eq:Chh1}
    \norm{C(h,h)}_{k,\mu}
    \leq
    \frac{1}{2} \norm{h * h}_{k,\mu}
    + \norm{h}_{k,\mu} \abs{\int h},
  \end{equation}
  and from Lemmas \ref{lem:equivalence of norms} and \ref{lem:Dk(yk
    g*h))},
  \begin{multline}
    \label{eq:Chh2}
    \norm{h*h}_{k,\mu}
    \leq
    K_1 \norm{ D^k (y^{k+1} (h*h)) }_{L^2(e^{\mu y})}
    \\
    \leq
    K_1
    \sum_{i=0}^{k+1} \binom{k+1}{i}
    \norm{(D^{i-1} (y^i h)) * (D^{k+1-i}(y^{k+1-i}h))}_{L^2(e^{\mu y})}
    \\
    \leq
    K_2 \norm{h}_{k,\mu} \int \abs{h} e^{\frac{\mu}{2}y}
    + K_3 \norm{h}_{k,\mu} \int \abs{h} y\,e^{\frac{\mu}{2}y}
    \\
    + K_4
    \sum_{i=2}^{k} \binom{k+1}{i}
    \norm{h}_{i,\mu} \int \abs{D^{k+1-i}(y^{k+1-i}h)} e^{\frac{\mu}{2}y}
    \\
    \leq
    K_5 \norm{h}_{k,\mu} \left(
      \int \abs{h} e^{\frac{\mu}{2}y}
      + \int \abs{h} y\,e^{\frac{\mu}{2}y}
    \right)
    + K_6 \norm{h}_{k-1,\nu}^2.
  \end{multline}
  Here, we have bounded the terms in the sum using Young's inequality;
  the terms for $i=0$, $i=1$ and $i=k+1$ have been bounded putting the
  $L^2$ norm on the part with $k$ derivatives (and an integration by
  parts for $i=0$). The rest of the terms have been also bounded using
  Young's inequality, and we have applied Cauchy-Schwarz for the last
  inequality, together with the fact that $\norm{h}_{k-1,\nu}$
  controls all norms $\norm{\cdot}_{i,\nu}$ with $0 \leq i < k$.
  Notice that $K_1$ through $K_5$ depend only on $k$ and $\mu$, and
  $K_6$ only on $k,\mu$ and $\nu$. Finally, from (\ref{eq:Chh1}),
  \begin{multline*}
    \norm{C(h,h)}_{k,\mu}
    \leq
    \norm{h}_{k,\mu} \abs{\int h}
    \\
    +
    K_5 \norm{h}_{k,\mu}
    \left(
      \int \abs{h} e^{\frac{\mu}{2}y}
      + \int \abs{h} y\,e^{\frac{\mu}{2}y}
    \right)
    + K_6 \norm{h}_{k-1,\nu}^2
    \\
    \leq
    K_7 \norm{h}_{k,\mu} \int \abs{h} e^{\frac{\nu}{2}y}
    + K_6 \norm{h}_{k-1,\nu}^2.
  \end{multline*}
  This proves our result for $k \geq 1$. For the case $k = 0$, the
  same calculation proves the inequality, with the difference that in
  this case the term multiplying $K_4$ in (\ref{eq:Chh2}) does not
  appear and the argument is much simpler.
\end{proof}
We can finally prove Theorem \ref{thm:local exponential
  convergence-intro}:

\begin{proof}[Proof of Thorem \ref{thm:local exponential
    convergence-intro}]
  \textbf{Step 1: Proof for $k=-1$}. Denote $\norm{\cdot}_{-1,\mu}$ by
    $\norm{\cdot}$. We assume that $\norm{g^0 - g_\rho} < \infty$, and
    want to show the exponential convergence in the same norm. We
    prove an \emph{a priori} estimate below, which can be made
    rigorous by considering approximate regular solutions instead of
    solutions to the full equation. The numbers $K, K_1, K_2,\dots$
    which appear in the proof are understood to depend only on $\rho,
    E, \delta, k, \mu$ and $\nu$.

  Consider a solution $g$ of the self-similar equation eq.
  (\ref{eq:ss-coag-lambda=0}) in the conditions of the theorem, and
  set $h := g - g_\rho$, $h^0 := g^0 - g_\rho$. Then $h$ is a solution
  of the equation $\partial_t h = L(h) + C(h,h)$, and by Duhamel's
  formula we have
  \begin{equation}
    \label{eq:Duhamel-0}
    h(t)
    = e^{tL} h^0
    + \int_0^t e^{(t-s)L} \left[ C(h(s), h(s)) \right] \,ds
    \quad
    (t \geq 0).
  \end{equation}
  Using our bound on the exponential decay of the semigroup $e^{tL}$
  from Corollary \ref{cor:spectral gap (-1,mu) and (0,mu)} we obtain
  \begin{equation*}
    \norm{h(t)}
    \leq
    K_1 \norm{h^0} e^{- t}
    +
    K_1 \int_0^t \norm{ C(h(s), h(s)) } e^{- (t-s)} \,ds,
  \end{equation*}
  for some $K_1 > 0$ which depends on $\rho, E$ and $\mu$. Now, use the bound in Lemma \ref{lem:C(h,h) bound} to get
  \begin{equation}
    \label{eq:Duhamel-1}
    \norm{h(t)}
    \leq
    K_1 \norm{h^0} e^{- t}
    +
    K_2 \int_0^t
    E_{\frac{\mu}{2}}[h(s)]
    \norm{ h(s) }  e^{-(t-s)} \,ds
    ,
  \end{equation}
  where $E_{\frac{\mu}{2}}[h(s)]$ is the exponential moment of order
  $\mu/2$ of $h$ (defined in
  eq. \eqref{eq:dfn-exp-moments}). Considering $E$ and $\nu$ from the
  hypotheses of the theorem, we have $E_{\nu/2}[g(t)] \leq E$ for all $t
  \geq 0$, so there is some constant $C_1 > 0$ such that
  \begin{equation*}
    E_{\nu/2}[h(t)] \leq C_1
    \quad \text{ for all } t > 0,
  \end{equation*}
  (as $h = g-g_\rho$, and $g_\rho(y) e^{\frac{\nu}{2} y}$ is integrable). By
  Hölder's inequality,
  \begin{equation}
    \label{eq:nonlinear-L2-1}
    E_{\frac{\mu}{2}}[h(s)]
    \leq
    M_0[h(s)]^{\frac{\nu - \mu}{\nu}}
    E_{\nu/2}[h(s)]^{\frac{\mu}{\nu}}
    \leq
    \epsilon_1
    \,
    C_2
    =:
    \epsilon_2
    \quad
    \text{ for } s \geq 0,
  \end{equation}
  where
  \begin{equation*}
    C_2 := C_1^{\frac{\mu}{\nu}},
    \qquad
    \epsilon_1^{\frac{\nu}{\nu - \mu}}
    := K \max \left\{ \sqrt{\epsilon}, \epsilon \right\},
  \end{equation*}
  and $K$ is the one from Lemma
  \ref{lem:particle_difference_estimate}. We have used here the bound
  on the relative entropy in the hypotheses, and the fact that it is
  conserved in time, as the relative entropy is nonincreasing.

  Using this bound on eq. \eqref{eq:Duhamel-1} we have
  \begin{equation}
    \label{eq:Duhamel-1.1}
    \norm{h}
    \leq
    K_1 \norm{h^0} e^{- t}
    +
    \epsilon_3
    \int_0^t
    \norm{ h(s) }  e^{- (t-s)} \,ds,
  \end{equation}
  where $\epsilon_3 := \epsilon_2 K_2$. Equivalently,
  \begin{equation*}
    \norm{h} e^{t}
    \leq
    K_1 \norm{h^0}
    +
    \epsilon_3
    \int_0^t
    \norm{ h(s) }  e^s \,ds,
  \end{equation*}
  and by Gronwall's lemma,
  \begin{equation*}
    \norm{h} e^{t}
    \leq
    K_1 \norm{h^0} e^{\epsilon_3 t},
  \end{equation*}
  or
  \begin{equation*}
    \norm{h}
    \leq
    K_1 \norm{h^0} e^{-(1 - \epsilon_3) t}.
  \end{equation*}
  Choosing $\epsilon$ so that $1 - \epsilon_3 \geq \delta$
  proves the theorem.

  \textbf{Step 2: Proof for $k \geq 0$.} For this step, denote by
  $\norm{\cdot}$ the norm $\norm{\cdot}_{k,\mu}$. The difference in
  this case is that the inequality (\ref{eq:C(h,h) bound - (k,mu)})
  has an additional term, which can be dealt with by using our
  previous steps. Take $k \geq 0$, and assume that we have proved the
  result on the convergence in $\norm{\cdot}_{k-1,\gamma}$ for $0 <
  \gamma < 2/\rho$. Repeating the same reasoning as before we arrive
  at the following instead of (\ref{eq:Duhamel-1.1}):
  \begin{multline}
    \label{eq:Duhamel-2}
    \norm{h(t)}
    \leq
    K_1 \norm{h^0} e^{- t}
    +
    \epsilon_3
    \int_0^t
    \norm{ h(s) }  e^{- (t-s)} \,ds
    \\
    + 
    K_2 \int_0^t
    \norm{h(s)}_{k-1,\nu}^2
    e^{- (t-s)} \,ds
    ,
  \end{multline}
  where the inequality \eqref{eq:C(h,h) bound - (k,mu)} was applied
  with some $\nu'$ such that $\mu < \nu' < \nu$ in the place of $\nu$.
  Now, as we know that $\norm{h(s)}_{k-1,\nu}$ is exponentially
  decaying in time, for any $\delta < \delta' < 1$ we have
  \begin{equation*}
    \norm{h(s)}_{k-1,\nu}
    \leq
    K_3 \norm{h^0}_{k-1,\nu} e^{-\delta s}.
  \end{equation*}
  Hence, we get
  \begin{multline*}
    \norm{h(t)}
    \leq
    K_1 \norm{h^0} e^{- t}
    \\
    +
    \epsilon_3
    \int_0^t
    \norm{ h(s) }  e^{-(t-s)} \,ds
    + 
    K_4 \norm{h^0}_{k-1,\nu}
    e^{-\delta' t}
    \\
    \leq
    K_5 \norm{h^0}_{k,\nu} e^{- {\delta'} t}
    +
    \epsilon_3
    \int_0^t
    \norm{ h(s) }  e^{-{\delta'} (t-s)} \,ds,
  \end{multline*}
  using that $\norm{h}_{k,\nu}$ controls both $\norm{h}_{k-1,\nu}$ and
  $\norm{h}_{k,\mu}$. Now this inequality has the same form as
  (\ref{eq:Duhamel-1.1}), and the same reasoning followed in the first
  step finishes the proof.
\end{proof}

\section{Global convergence}
\label{sec:global}

To prove global convergence we consider two stages in the evolution of
the equation: when the solution is close enough to equilibrium, we can
use Theorem \ref{thm:local exponential convergence-intro} to get an
exponential rate of convergence; when it is far from equilibrium we
use the following lemma, which says that the solution converges to
equilibrium exponentially fast in the $L^2$ norm:

\begin{lem}
  \label{lem:L2 explicit convergence}
  Take a nonnegative initial condition $g_0 \in L^1_2$, with
  derivative $g_0' \in L^1$ and mass $\rho > 0$, and consider the
  solution $g$ to eq.  \eqref{eq:ss-coag-lambda=0}.  There are
  explicit constants $C, T > 0$ which depend only on $g_0$ such that
  \begin{equation}
    \label{eq:exp convergence}
    \norm{g(t) - g_\rho}_2 \leq C e^{-\frac{1}{2}t}
    \qquad (t > T).
  \end{equation}
\end{lem}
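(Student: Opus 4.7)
The plan is to exploit the fact that Smoluchowski's equation with constant kernel becomes explicitly solvable after a Fourier transform in the size variable. Setting $\hat g(t,\xi) := \int_0^\infty e^{-i\xi y} g(t,y)\,dy$, the self-similar equation \eqref{eq:ss-coag-lambda=0} translates to
\begin{equation*}
  \partial_t \hat g + \xi\,\partial_\xi \hat g = \hat g + \tfrac{1}{2}\hat g^2 - N(t)\,\hat g,
  \qquad N(t) := \hat g(t,0) = M_0[g(t)].
\end{equation*}
The substitution $A := 1/\hat g$ linearises this into
\begin{equation*}
  \partial_t A + \xi\,\partial_\xi A = (N(t)-1)\,A - \tfrac{1}{2},
\end{equation*}
whose characteristics $\xi(t)=\xi_0 e^{t}$ reduce the PDE to a scalar linear ODE along each characteristic. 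Using the explicit formula for $N(t)$ from Section \ref{sec:behavior-moments}, I integrate that ODE in closed form and obtain an explicit expression for $A(t,\xi)$ in terms of $\hat g^0(\xi e^{-t})$. The same computation applied to the equilibrium recovers the stationary solution $A_\rho(\xi) = (i\rho\xi+2)/4 = 1/\hat g_\rho(\xi)$.

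The closed form makes it possible to write $A(t,\xi) - A_\rho(\xi)$ explicitly in terms of $\hat g^0(\xi e^{-t})$ and coefficients that are either bounded or grow like $e^t$. The assumptions $g_0\in L^1_2$ and $(g_0)'\in L^1$ give, respectively, the second-order Taylor expansion
\begin{equation*}
  \hat g^0(0) - \hat g^0(\eta) = i\rho\,\eta + S(\eta),\qquad |S(\eta)| \leq M_2[g_0]\,\eta^2,
\end{equation*}
and, via integration by parts, the uniform bound $|\eta\,\hat g^0(\eta)|\leq\|g_0\|_{L^1}+\|(g_0)'\|_{L^1}$. Evaluated at $\eta = \xi e^{-t}$ and multiplied by the $e^t$-growing prefactor from the integrating factor of the $A$-equation, the linear part $i\rho\,\eta$ produces exactly the $i\rho\xi/4$ that cancels the corresponding contribution of $A_\rho$. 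After these cancellations, the explicit formula exhibits $A(t,\xi) - A_\rho(\xi)$ as a sum of contributions each of which is either $O(e^{-t})$ uniformly in $\xi$, or of the form $e^{-t}\,\Psi(\xi e^{-t})$ with $\Psi\in L^2(\RR)$ uniformly in $t$.

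Next, I translate back to $\hat g - \hat g_\rho$. Writing $\hat g - \hat g_\rho = -\hat g\,\hat g_\rho\,(A - A_\rho)$ and using the pointwise bounds $|\hat g(t,\xi)| \leq N(t)\leq C$ and $|\hat g_\rho(\xi)| = 4/\sqrt{\rho^2\xi^2+4}$, one obtains
\begin{equation*}
  |\hat g(t,\xi) - \hat g_\rho(\xi)|
  \leq \frac{C'\,|A(t,\xi)-A_\rho(\xi)|}{\sqrt{\rho^2\xi^2+4}}\,,
\end{equation*}
and the weight $(\rho^2\xi^2+4)^{-1/2}$ supplies $L^2(\RR)$-integrability of the large-$\xi$ tail. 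The decisive calculation is the scaling identity: for any $\Psi \in L^2(\RR)$,
\begin{equation*}
  \bigl\|\,e^{-t}\,\Psi(\cdot\,e^{-t})\,\bigr\|_{L^2(\RR)}
  = e^{-t}\,e^{t/2}\,\|\Psi\|_{L^2(\RR)}
  = e^{-t/2}\,\|\Psi\|_{L^2(\RR)},
\end{equation*}
which converts the leading $e^{-t}\Psi(\xi e^{-t})$-type contributions into an $L^2$-norm of size $e^{-t/2}$; the uniformly $O(e^{-t})$ pieces are even smaller once integrated against the weight. Plancherel's identity (extending functions from $(0,\infty)$ by zero to $\RR$) transfers this bound to $\|g(t)-g_\rho\|_{L^2(0,\infty)}$, yielding the stated $e^{-t/2}$ rate for all sufficiently large $t$.

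The main technical obstacle is the bookkeeping of the explicit formula: one must isolate the leading $e^{-t}\Psi(\xi e^{-t})$ contributions cleanly and verify that the associated profiles $\Psi$ lie in $L^2(\RR)$ uniformly in $t$. The $L^1$-control on $(g_0)'$ is used for the uniform bound on $\eta\,\hat g^0(\eta)$ (hence on $S$ for $|\eta|$ not small), while the second-moment assumption $g_0\in L^1_2$ controls $S(\eta)$ for $|\eta|$ small; both ingredients are needed to keep the profiles in $L^2(\RR)$. Without either, the scaling identity no longer yields the sharp rate $e^{-t/2}$.
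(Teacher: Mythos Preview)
Your overall strategy---explicit Fourier-side solution plus Taylor control from $g_0\in L^1_2$ and decay from $(g_0)'\in L^1$---is exactly the paper's, but the accounting you describe for $A-A_\rho$ is wrong in a way that breaks the $L^2$ estimate. Writing (say in the normalised case $\int g_0=\int y g_0=2$, so $A_\rho(\xi)=(1+i\xi)/2$) the solution of your linear ODE gives
\[
  A(t,\xi)-A_\rho(\xi)
  = \Big[\tfrac{1}{N(t)}-\tfrac12\Big]
  + i\rho\Big[\tfrac{1}{N(t)^2}-\tfrac14\Big]\xi
  + e^{P(t)}\,R\big(\xi e^{-t}\big),
\]
with $e^{P(t)}=e^{t}(N_0/N(t))^2\asymp e^{t}$ and $R(\eta)=A_0(\eta)-A_0(0)-A_0'(0)\eta=O(\eta^2)$ near $0$. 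Thus the leading remainder is $e^{t}R(\xi e^{-t})$, \emph{not} $e^{-t}\Psi(\xi e^{-t})$; for $|\xi e^{-t}|\le\epsilon$ it is of size $\xi^2 e^{-t}$, and the linear term is of size $|\xi|\,e^{-t}$. Neither is ``$O(e^{-t})$ uniformly in $\xi$'', and neither is $e^{-t}$ times an $L^2$ profile of $\xi e^{-t}$. If you now use only $|\hat g|\le N(t)$ together with $|\hat g_\rho|\lesssim\langle\xi\rangle^{-1}$, you get $|\hat g-\hat g_\rho|\lesssim |\xi|\,e^{-t}$ on $|\xi|\le\epsilon e^{t}$, and
\[
  \int_{|\xi|\le\epsilon e^{t}}|\xi|^2 e^{-2t}\,d\xi \;\asymp\; e^{t},
\]
which diverges. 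The scaling identity $\|e^{-t}\Psi(\cdot\,e^{-t})\|_{L^2}=e^{-t/2}\|\Psi\|_{L^2}$ is correct, but you are applying it to the wrong object: the prefactor in your remainder is $e^{t}$, not $e^{-t}$.

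What is missing is a \emph{second} factor of $\langle\xi\rangle^{-1}$, namely $|\hat g(t,\xi)|\le C\langle\xi\rangle^{-1}$ (equivalently $|A(t,\xi)|\ge c\langle\xi\rangle$) on the region $|\xi e^{-t}|\le\epsilon$. This is precisely the denominator lower bound the paper proves as its Step~2, and with it the factor $|\hat g\,\hat g_\rho|\lesssim\langle\xi\rangle^{-2}$ turns the $\xi^2 e^{-t}$ remainder into the needed uniform $O(e^{-t})$. You cannot extract this bound from $|A-A_\rho|=O(e^{-t})$ (which would indeed give $|A|\ge|A_\rho|-O(e^{-t})\gtrsim\langle\xi\rangle$), because that uniform smallness of $A-A_\rho$ is exactly what fails. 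A secondary issue is that $\hat g^0$ may vanish, so $A_0=1/\hat g^0$ and hence $A(t,\cdot)$ can have poles; the paper avoids this by working directly with $\hat g-\hat g_\rho$ and bounding the \emph{denominator} of the explicit formula for $\hat g$ away from zero. Once you add that lower bound (or, equivalently, switch to estimating $\hat g-\hat g_\rho$ directly as in the paper), the rest of your outline goes through and recovers the $e^{-t/2}$ rate via the same splitting at $|\xi|\sim e^{t}$.
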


The lemma is independent from the rest of the paper, as it can be
proved by using the explicit solution of the Fourier transform of
equation \eqref{eq:ss-coag-lambda=0}. The technique to prove it is
very similar to the one used in \cite{MR2139564} to obtain the uniform
convergence to equilibrium; the main difference is that here we are
interested in \emph{obtaining a rate}, while in \cite{MR2139564}
emphasis was placed in giving very general conditions on the initial
data under which convergence holds, without any given rate.

Lemma \ref{lem:L2 explicit convergence} does not give the optimal rate
of convergence, but we only need it to show that any solution, after
some initial time which may be explicitly given in terms of the
initial datum, is in the conditions of our local result in Theorem
\ref{thm:local exponential convergence-intro}, and hence prove our global
result in Theorem \ref{thm:global-convergence-intro}. However, one
could be more careful in the proof below and, imposing slightly
stronger decay conditions on the initial datum $g_0$, obtain the
optimal rate $e^{-t}$.

We will first prove Theorem \ref{thm:global-convergence-intro}
assuming Lemma \ref{lem:L2 explicit convergence}, leaving the proof of
the lemma itself for the end.

\begin{proof}[Proof of Theorem \ref{thm:global-convergence-intro}]
  Let us show that, for any $\mu < \nu$, we can apply Theorem
  \ref{thm:local exponential convergence-intro} for the exponential decay of
  the norm $\norm{\cdot}_{k,\mu}$ after an certain initial time $t_0$.

  Lemma \ref{lem:exp-moment-finite} and an argument as in the proof of
  Lemma \ref{lem:Emu-finite-after-T} show that for some $0 < \mu' <
  \nu$, the exponential moment $E_{\mu'/2}[g]$ is uniformly bounded
  for all times $t \geq 0$. So, take $0 < \mu < \mu'$ with $\mu <
  2/\rho$. Then, the first condition in Theorem \ref{thm:local
    exponential convergence-intro} holds. Together with this, Lemma
  \ref{lem:L2 explicit convergence}, shows that the relative entropy
  $F[g|g_\rho]$ tends to zero at an explicit rate, and hence condition
  2 in Theorem \ref{thm:local exponential convergence-intro} is also
  satisfied after some initial time $t_0$.

  Finally, the same calculation as in Theorem \ref{thm:local
    exponential convergence-intro} shows that, for $0 < \mu < \mu'$
  with $\mu < 2/\rho$, the norm $\norm{g-g_\rho}_{k,\mu}$ is bounded
  in bounded time intervals, and in particular it is finite at $t =
  t_0$.  This is enough to apply the Theorem \ref{thm:local
    exponential convergence-intro}, so we have
  \begin{equation*}
    \norm{g(t)-g_\rho}_{k,\mu}
    \leq
    K_1 e^{-\delta (t-t_0)}
    \qquad
    (t \geq t_0),
  \end{equation*}
  for any $0 < \mu < \nu$ with $\mu < 2/\rho$. As
  $\norm{g(t)-g_\rho}_{k,\mu}$ is bounded for $t \in [0,t_0]$, this
  proves the result.
\end{proof}

\begin{proof}[Proof of Lemma \ref{lem:L2 explicit convergence}]
  By the scaling and time-translation symmetries of eq.
  \eqref{eq:coag-eq}, it is enough to prove the result when $\int y\,g
  = \int g = 2$ (see, e.g., \cite{MR2139564}). We consider the Fourier
  transform of $g$,
  \begin{equation}
    \label{eq:Fourier-g}
    \phi_t(\mu) := \int_0^\infty e^{-i \mu y} g(t,y) \,dy.
  \end{equation}
  A very similar calculation to that in section
  \ref{sec:behavior-moments} shows that $\phi$ has the following
  explicit expression, identical to \eqref{eq:explicit-Emu-ss} (notice
  that we are assuming $\int g_0 \equiv M_0^0 = 2$):
  \begin{equation}
    \label{eq:Fourier-explicit}
    \phi_t(\mu)
    =
    2
    +
    \frac{2\tau}{\frac{2}{\phi_0(\frac{\mu}{\tau}) - 2} - \tau+1}
    =
    \frac{2 \phi_0(\frac{\mu}{\tau})}
    {2 + (\tau - 1)(2 - \phi_0(\frac{\mu}{\tau}))}
    ,
  \end{equation}
  for $\mu \in \RR$ and $t > 0$, denoting $\tau := e^t$. By
  Plancherel's theorem, proving our result is equivalent to showing
  that
  \begin{equation}
    \label{eq:Fourier-exp convergence}
    \norm{\phi_t - \phi_\infty}_2 \leq C e^{-\frac{1}{2}t}
    \qquad (t > 0),
  \end{equation}
  where $\phi_\infty$ is the Fourier transform of the equilibrium with
  mass $2$, $g_2(y) := 2 e^{-y}$,
  \begin{equation}
    \label{eq:Fourier-equilibrium}
    \phi_\infty(\mu) := \frac{2}{1 + i\mu}
    \qquad (\mu \in \RR).
  \end{equation}

\paragraph{Step 1: Approximations of $\phi_0(x)$ at $x=0$}

As $\phi(0) = 2$ and
\begin{gather}
  \label{eq:phi_0'}
  \frac{d}{dx} \phi_0(x)
  = - i \int_0^\infty e^{-i x y} y \,g_0(y) \,dy,
  \\
  \label{eq:bound phi_0'}
  \abs{\frac{d}{dx} \phi_0(x)}
  \leq
  \int_0^\infty y \,g_0(y) \,dy = 2,
\end{gather}
we readily have
\begin{equation}
  \label{eq:bound phi_0-2}
  \abs{\phi_0(x) - 2}
  \leq 2 \abs{x}
  \qquad (x \in \RR).
\end{equation}
Similarly, $\phi'(0) = -2i$ and
\begin{gather}
  \label{eq:phi_0''}
  \frac{d^2}{dx^2} \phi_0(x)
  = - \int_0^\infty e^{-i x y} y^2 \,g_0(y) \,dy,
  \\
  \label{eq:bound phi_0''}
  \abs{\frac{d^2}{dx^2} \phi_0(x)}
  \leq
  \int_0^\infty y^2 \,g_0(y) \,dy
  =: M_2,
\end{gather}
and then, using Taylor's series at $x = 0$,
\begin{equation}
  \label{eq:bound difference quotient}
  \abs{\frac{\phi_0(x) - 2}{x} + 2i}
  \leq
  \frac{1}{2} M_2 \abs{x}
  \qquad
  (x \in \RR).
\end{equation}

\paragraph{Step 2: Lower bound of the denominator in
  \eqref{eq:Fourier-explicit}.}

For $\mu \in \RR$ and $\tau > 1$ we have, calling $x := \mu/\tau$,
\begin{equation}
  \label{eq:den1}
  \abs{2 + (\tau - 1)(2 - \phi_0(x))}
  \geq
  \abs{2 + 2i\mu}
  - \abs{2i\mu - (\tau -1)(2 - \phi_0(x))},
\end{equation}
and rewriting this second term gives
\begin{equation*}
  2i\mu - (\tau -1)(2 - \phi_0(x))
  =
  \mu \left(
    2i + \frac{\phi_0(x) - 2}{\mu/\tau}
  \right)
  +
  2 - \phi_0(x),
\end{equation*}
and using eqs. \eqref{eq:bound phi_0-2} and \eqref{eq:bound difference
  quotient},
\begin{equation}
  \label{eq:den2}
  \abs{ 2i\mu - (\tau -1)(2 - \phi_0(x)) }
  \leq
  \frac{1}{2} M_2 \abs{\mu} \abs{x}
  +
  2 \abs{x}
  \qquad (x \equiv \frac{\mu}{\tau} \in \RR),
\end{equation}
and so from \eqref{eq:den1},
\begin{equation*}
  \abs{2 + (\tau - 1)(2 - \phi_0(x))}
  \geq
  1 + \abs{\mu}
  -\frac{1}{2} M_2 \abs{\mu} \abs{x}
  -2 \abs{x}
  .
\end{equation*}
Finally, for $\abs{x} \leq \epsilon_1 := \min\left\{ 1/4, 1/M_2
\right\}$,
\begin{equation}
  \label{eq:den- small x estimate}
  \abs{2 + (\tau - 1)(2 - \phi_0(x))}
  \geq
  \frac{1}{2} (1 + \abs{\mu})
  \qquad (\abs{x} \equiv \abs{\frac{\mu}{\tau}} \leq \epsilon_1 )
  .
\end{equation}
When $\abs{x} \geq \epsilon_1$, we estimate the denominator the other
way round. Call $$\epsilon_2 := \inf_{\abs{x} \geq \epsilon_1} \abs{2 -
  \phi_0(x)} > 0.$$ The above defined $\epsilon_2$ is strictly
positive because $\phi_0$ is continuous, $\abs{\phi_0(x)} < 2$ for
$\abs{x} > 0$ and
\begin{equation}
  \label{eq:phi_0 decay}
  \abs{\phi_0(x)}
  \leq 2 C_1 \frac{1}{\abs{x}}
  \qquad (\abs{x} > 0),
\end{equation}
with
\begin{equation*}
  C_1 := \int_0^\infty \abs{g_0'(y)} \,dy.
\end{equation*}
Hence, for $\abs{x} \geq \epsilon_1$ the denominator can also be
estimated as
\begin{equation}
  \label{eq:den3}
  \abs{2 + (\tau - 1)(2 - \phi_0(x))}
  \geq
  (\tau -1)\abs{(2 - \phi_0(x))} - 2
  \geq
  (\tau - 1) \epsilon_2 - 2,
\end{equation}
and then for $\tau -1 \geq C_2 := 3/\epsilon_2$,
\begin{equation}
  \label{eq:den- large x estimate}
  \abs{2 + (\tau - 1)(2 - \phi_0(x))}
  \geq
  1
  \qquad  (\abs{x} \equiv \abs{\frac{\mu}{\tau}} \geq \epsilon_1,
  \quad \tau \geq C_2 + 1)
  .
\end{equation}
Putting together \eqref{eq:den- small x estimate} and \eqref{eq:den-
  large x estimate} we obtain
\begin{equation}
  \label{eq:den- full estimate}
  \abs{2 + (\tau - 1)(2 - \phi_0(\mu/\tau))}
  \geq
  \frac{1}{2}
  \qquad (\mu \in \RR, \quad \tau \geq C_2 + 1).
\end{equation}

\paragraph{Step 3: Estimate for $\abs{\mu}$ large.}

Take $K > 0$. We have, for $\mu \in \RR$ and $t \geq 0$,
\begin{equation}
  \label{eq:xl1}
  \abs{\phi_t(\mu) - \phi_\infty(\mu)}
  \leq
  \abs{\phi_t(\mu)} + \abs{\phi_\infty(\mu)},
\end{equation}
and
\begin{equation}
  \label{eq:xl2}
  \int_{\abs{\mu} > K} \abs{\phi_\infty(\mu)}^2 \,d\mu
  \leq
  \int_{\abs{\mu} > K} \frac{1}{\abs{1 + i\mu}^2}
  \leq
  \int_{\abs{\mu} > K} \frac{1}{\mu^2}
  =
  \frac{2}{K}.
\end{equation}
On the other hand, notice that \eqref{eq:den3} implies that for $\tau
\geq C_2 + 1$,
\begin{equation}
  \label{eq:xl2.5}
  \abs{2 + (\tau - 1)(2 - \phi_0(x))}
  \geq
  \epsilon_3 \tau
\end{equation}
for some $\epsilon_3$ depending only on $C_2$ and $\epsilon_2$. Then,
using the explicit expression \eqref{eq:Fourier-explicit}, eq.
\eqref{eq:xl2.5} and eq. \eqref{eq:phi_0 decay},
\begin{equation}
  \label{eq:xl3}
  \abs{\phi_t(\mu)} \leq
  \frac{2}{\epsilon_3} \frac{ \abs{\phi_0(\mu/\tau)} }
  {\tau}
  \leq
  4 \frac{C_1}{\epsilon_3} \frac{1}{\abs{\mu}}
  \qquad (\tau \geq C_2 + 1),
\end{equation}
so, for $\tau \geq C_2 + 1$,
\begin{equation}
  \label{eq:xl4}
  \int_{\abs{\mu} > K} \abs{\phi_t(\mu)}^2 \,d\mu
  \leq
  16 \frac{C_1^2}{\epsilon_3^2}
  \int_{\abs{\mu} > K} \frac{1}{\abs{\mu}^2} \,d\mu
  =
  \leq 32 \frac{C_1^2}{\epsilon_3^2} \frac{1}{K}.
\end{equation}
Equations \eqref{eq:xl2} and \eqref{eq:xl4} together now give
\begin{equation}
  \label{eq:xl5}
  \int_{\abs{\mu} > K} \abs{\phi_t - \phi_\infty}^2
  \,d\mu
  \leq
  \frac{1}{K} \left(
    2 + 32 \frac{C_1^2}{\epsilon_3^2}
  \right)
  \qquad
  (\tau \geq C_2 + 1, \quad K > 0).
\end{equation}

\paragraph{Step 4: Estimate for $\abs{\mu}$ small.}

Denoting $x \equiv \mu/\tau$, the difference $\phi_t(\mu) -
\phi_\infty(\mu)$ can be written as
\begin{equation}
 \label{eq:phi - phi_infty}
 \phi_t(\mu) - \phi_\infty(\mu)
 =
 \frac{
   2i\mu(\phi_0(x) - 2)
   + 2\mu \left(
     2i + \frac{\phi_0(x) - 2}{x}
   \right)
 }
 {(1+i\mu)(2 + (1-\tau)(\phi_0(x)-2))}
 .
\end{equation}
For the numerator we use \eqref{eq:bound phi_0-2} and \eqref{eq:bound
 difference quotient} to get
\begin{multline}
 \label{eq:xs1}
 \abs{2i\mu(\phi_0(x) - 2)
   + 2\mu \left(
     2i + \frac{\phi_0(x) - 2}{x}
   \right)
 }
 \\
 \leq
 (4 + M_2) \abs{\mu} \abs{x}
 = (4 + M_2) \abs{\mu}^2 \frac{1}{\tau}.
\end{multline}
For the denominator, we use eq. \eqref{eq:den- small x estimate}:
\begin{equation*}
 \abs{2 + (1-\tau)(\phi_0(x)-2)}
 \geq
 \frac{1}{2} (1 + \abs{\mu})
 \qquad (\abs{\frac{\mu}{\tau}} \leq \epsilon_1 ).
\end{equation*}
Then, for $\abs{\frac{\mu}{\tau}} \leq \epsilon_1$,
\begin{equation}
 \label{eq:xs2}
 \abs{\phi_t(\mu) - \phi_\infty(\mu)}
 \leq
 C_3 \frac{1}{\tau} \frac{\abs{\mu}^2}{(1 + \abs{\mu})^2}
 \leq
 C_3 \frac{1}{\tau},
\end{equation}
with $C_3 := 4 (4 + M_2)$.
Hence for any $K := \epsilon_1 \tau$ we have
\begin{multline}
 \label{eq:xs3.5}
 \int_{\abs{\mu} < K}
 \abs{\phi_t(\mu) - \phi_\infty(\mu)}^2 \,d\mu
 \leq
 C_3^2 \frac{1}{\tau^2} \int_{\abs{\mu} < K} \,d\mu
 \\
 =
 2 C_3^2 \frac{K}{\tau^2}
 = 2 C_3^2 \epsilon_1^2 \frac{1}{K}
 =: C_4 \frac{1}{K}.
\end{multline}

\paragraph{Step 5: Final estimate.}

From \eqref{eq:xl5} and \eqref{eq:xs3.5}, taking $K := \epsilon_1
\tau$ we obtain for $\tau \geq C_2 + 1$:
\begin{equation}
 \label{eq:fin1}
 \int_{-\infty}^{+\infty}
 \abs{\phi_t(\mu) - \phi_\infty(\mu)}^2 \,d\mu
 \leq
 \frac{1}{K} \left(
   C_4
   + 2 + 32 \frac{C_1^2}{\epsilon_3^2}
 \right)
 =: C_5  e^{-t}.
\end{equation}
This shows \eqref{eq:Fourier-exp convergence}, and finishes the proof.

\end{proof}

\bibliographystyle{abbrv}
\bibliography{ozarfreo-latin1}

\end{document}